\newtheorem{main-theorem}{Theorem}
\newtheorem{proposition}{Proposition}[section]
\newtheorem{lemma}[proposition]{Lemma}
\theoremstyle{remark}
\newtheorem{remark}[proposition]{Remark}
\numberwithin{equation}{section}
\newcommand{\N}{\mathbb{N}}   
\newcommand{\R}{\mathbb{R}}
\newcommand{\Z}{\mathbb{Z}}                    
\newcommand{\IC}{\mathbb{C}} 
\newcommand{\IS}{\mathbb{S}}
\newcommand{\cV}{\mathcal{V}}
\newcommand{\cR}{\mathcal{R}}
\def\SO{\mathrm{SO}}
\def\so{\mathfrak{so}}
\def\O{\mathrm{O}} 
\def\SS{\mathbb{S}} 
\def\T{\mathsf{T}} 
\def\m{\mathfrak{m}} 
\def\H{\mathfrak{H}} 
\def\A{\mathcal{A}}
\def\cQ{\mathcal{Q}} %
\newcommand{\norm}[1]{\lVert #1 \rVert} 
\newcommand{\br}[1]{\left \langle #1 \right \rangle} 
\newcommand{\ol}[1]{\overline{#1}}
\newcommand{\To}{\longrightarrow}
\newcommand{\gae}{\gamma_{\mathrm{exp}}} 
\newcommand{\qe}{q_{\mathrm{exp}}}
\newcounter{sidenote}
\DeclareMathOperator{\esupp}{ess\,supp}
\DeclareMathOperator{\Spec}{\mathrm{Spec}}
\let\Re\relax
\DeclareMathOperator{\Re}{\mathrm{Re}}
\let\Im\relax
\DeclareMathOperator{\Im}{\mathrm{Im}}
 \DeclareMathOperator{\Id}{Id}
\title[The Born approximation in the Calderón problem]{The Born approximation in the three-dimensional Calderón problem}
\author[J. A. Barceló]{Juan A. Barceló} 
\address[JAB]{M$^2$ASAI. Universidad Politécnica de Madrid, ETSI Caminos, c. Profesor Aranguren s/n, 28040, Madrid, Spain.}
\email{juanantonio.barcelo@upm.es}
\author[C. Castro]{Carlos Castro} 
\address[CC]{M$^2$ASAI. Universidad Politécnica de Madrid, ETSI Caminos, c. Profesor Aranguren s/n, 28040, Madrid, Spain.}
\email{carlos.castro@upm.es}
\author[F. Macià]{Fabricio Macià}
\address[FM]{M$^2$ASAI. Universidad Politécnica de Madrid, ETSI Navales, Avda. de la Memoria, 4, 28040, Madrid, Spain.}
\email{fabricio.macia@upm.es}
\author[C. J. Meroño]{Cristóbal J. Meroño}
\address[CJM]{M$^2$ASAI. Universidad Politécnica de Madrid, ETSI Caminos, c. Profesor Aranguren s/n, 28040, Madrid, Spain.}
\email{cj.merono@upm.es}
\begin{document}

\begin{abstract}
Uniqueness and reconstruction in the three-dimensional Calderón inverse conductivity problem can be reduced to the study of the inverse boundary problem for  Schrödinger operators $-\Delta +q $.
We study the Born approximation of $q$ in the ball, which amounts to studying the linearization of the inverse   problem.
 We first analyze this approximation  for real and radial potentials in any dimension $d\ge 3$. 
We show that this approximation is well-defined and obtain a closed formula that involves the spectrum of the Dirichlet-to-Neumann map associated to $-\Delta + q$.  
We then turn to general real and essentially bounded potentials in three dimensions and introduce the notion of averaged Born approximation, which captures the exact invariance properties of the   inverse problem. 
We obtain explicit formulas for the averaged Born approximation in terms of the matrix elements of the Dirichlet to Neumann map in the basis spherical harmonics.
 To show that the averaged Born approximation does not destroy information on the potential, we also  study the high-energy behavior of the matrix elements of the Dirichlet to Neumann map.
\end{abstract}

\maketitle


\section{Introduction and main results}

\subsection{Motivations and setting of the results}
Let $d\ge 2$ and consider a bounded open  set  $\Omega \subset \R^d$ and a function  $\gamma \in  L^\infty (\Omega, \R)$ such that $\gamma(x) \ge \gamma_0>0$ for almost every $x$ in $\Omega$. The solution $u \in H^1(\Omega)$ of the problem
\begin{equation}  \label{id:calderon_direct}
\left\{
\begin{array}{rlr}
-\nabla \cdot  (\gamma \nabla u)=& 0   &\text{ in }   \Omega,\\
u |_{  \partial \Omega} =& g, &    \\
\end{array}\right.
\end{equation} 
models the electrostatic potential generated in the conductor $\Omega$, having conductivity $\gamma$, by a voltage $g$ at the boundary. If $\partial\Omega$ is smooth enough, the exterior normal vector field $\nu$ to $\partial \Omega$ is well-defined and the quantity $ \gamma(x) \partial_{\nu} u(x) = \gamma(x) \nu \cdot \nabla u(x)$ represents the   electric current density flowing through the boundary at $x\in \partial \Omega$. 

The Calderón inverse problem (which dates back to 1980, see \cite{calderon}) consists in determining whether or not one can recover the values of $\gamma$ on $\Omega$  from measurements    of   $\gamma  \partial_{\nu} u $ made at the boundary  $\partial \Omega$ for different choices of $g$ in \eqref{id:calderon_direct}. In order to give a more mathematically precise statement, it is convenient to think of $\gamma  \partial_{\nu} u$ as the value at $g\in C^\infty(\partial\Omega)$ of the linear operator
\[
\Lambda[\gamma]g  := \gamma  \partial_{\nu} u,
\]
where $u$ is the unique solution of \eqref{id:calderon_direct} corresponding to this choice if $g$.
This operator is known as the {\it Dirichlet to Neumann map}, or D-N map for short, since it maps the Dirichlet data to the Neumann data of the boundary value problem \eqref{id:calderon_direct}. It was shown by Sylvester and Uhlmann \cite{SylUhl88} that, at least when $\gamma$ is smooth, that $\Lambda[\gamma] $ is a first order pseudodifferential operator on $\partial\Omega$, and more generally, it always maps $H^{1/2}(\partial \Omega)$ into $H^{-1/2}(\partial \Omega)$.
The Calderón problem can be seen as a model of {\it Electrical Impedance Tomography} (EIT), which  is related to important applications in medical imaging \cite{Assenheimer_2001,cheney99,Isaacson_2006}, or geophysics.

The Calderón problem can be reformulated as determining whether or not the mapping $\gamma\longmapsto\Lambda[\gamma]$ is injective (at least when $\gamma$ varies in some subspace of $L^\infty (\Omega, \R)$) ({\it uniqueness}), and in the affirmative case, in providing appropriate algorithms to recover $\gamma$ explicitly from $\Lambda[\gamma]$ ({\it reconstruction}). The uniqueness and the related question of stability  have driven a considerable amount of research since Calderón's seminal contribution and the early works \cite{KV84, SU87}. The reconstruction aspect has also been the subject of intensive research since the work  \cite{nachman88}. When $d=3$ we mention among many others   \cite{Alessandrini90,Brown,CaroRogers16,GLU2003,Haberman15}, see for example \cite{GKLU2009,uhlmann14} for more references.   We will give more references on the numerical aspects of this problem later on.

A well known simplification, particularly relevant when it comes to designing algorithms for solving the reconstruction problem, is that one can assume that $\Omega$ is a ball in euclidean space without loss of generality. In fact one can also assume that the conductivity is equal to 1 in a neighborhood of the boundary. This is a consequence of the boundary determination results of \cite{Alessandrini90,Brown}, which make possible the construction of an appropriate extension of $\gamma$ to a ball containing $\Omega$. One can recover the D-N map in $\partial \Omega$ from the D-N map in the boundary of the ball.   See \cite[Section 3]{CaroRogers16} for more comments and references on this procedure. 

Another simplification comes from the fact that, if $\gamma\in W^{2,\infty}(\Omega,\R_+)$, then $\nabla \cdot  (\gamma \nabla u)=0$ if and only if $w=\sqrt{\gamma}u$ satisfies $(-\Delta + q)w=0$, where 
\begin{equation}\label{e:qgamma}
q:=\frac{\Delta \sqrt{\gamma}}{\sqrt{\gamma}}.
\end{equation}
From this observation, it is possible to show that the uniqueness question can be reduced to the unique determination of a potential $q\in L^\infty(\Omega,\R)$ from the Dirichlet to Neumann map $\Lambda_q$ associated to the Schrödinger operator $-\Delta+q$ on $\Omega$.
In particular, one has that 
\begin{equation} \label{id:DNmap}
\Lambda_q f = \gamma^{-1/2}  \left( \Lambda[\gamma]  +\frac{1}{2}    \partial_\nu\gamma \right) \gamma^{-1/2} f .
\end{equation}

The approach used since \cite{SU87} to prove to uniqueness and reconstruction of $q$ from $\Lambda_q$ in $d\ge 3$ involves  certain exponentially growing solutions on $\Omega$ of  the equation $-\Delta u + qu =0$ 
known as {\it Complex Geometrical Optic} solutions   or   CGOs.
To describe these solutions, we define the following algebraic variety  of $\IC^d$:
\begin{equation} \label{id:Vd}
\mathcal V(d) = \{ \zeta \in \IC^d : |\zeta|>0,\; \zeta \cdot \zeta = 0\}.
\end{equation}
Notice that $\zeta \in \mathcal V(d)$ if and only if $|\Re(\zeta)| = |\Im(\zeta)|$ and $\Re(\zeta) \cdot \Im(\zeta) =0$; write 
\[
e_\zeta(x)=e^{\zeta\cdot x},\qquad x\in\R^d,
\]
for the exponential harmonic functions introduced in this context by Calderón in \cite{calderon}. Given $h>0$ and $\zeta \in\mathcal V(d) $, a CGO is a family of functions $\psi_\zeta^h \in H^1(\Omega)$ that solve $-\Delta \psi_\zeta^h + q\psi_\zeta^h =0$ on $\Omega$ such that    
\begin{equation*} 
\psi_\zeta^h  = e_{\zeta/h}(1+ r_{h,\zeta,q}), \qquad \lim_{h\to 0^+}\norm{r_{h,\zeta,q} }_{L^2(\Omega)} = 0.
\end{equation*} 
Notice that when $\zeta \in \mathcal V(d)$, the exponential $e_{\zeta/h}$ is a harmonic function in $\R^d$ whose $L^2(\Omega)$-norm diverges as $h\to 0^+$.

The origins of CGOs date back to Faddeev \cite{Fadd65},  who introduced similar objects in the context of Scattering Theory. They have been shown to exist under different regularity assumptions on the potential/conductivity. Their  importance stems from the fact that, as first proved by Sylverster and Uhlmann \cite{SU87}, if one extends $q$ by zero to the whole of $\R^d$ then its Fourier transform satisfies:
\begin{equation}  \label{eq:fourier_from_DNmap}
\widehat{q}(\xi) =    \lim_{h\to 0^+} \br{ e_{{\zeta_1/h}}   ,(\Lambda_q-\Lambda_0) \psi_{\zeta_2}^h}_{H^{1/2}(\partial \Omega)\times H^{-1/2}(\partial \Omega)}, 
\end{equation}
for any $\zeta_1,\zeta_2 \in V(d)$ such that $\zeta_1 + \zeta_2 = -ih\xi $. We remark that \eqref{eq:fourier_from_DNmap} also holds when 
\begin{equation} \label{id:zeta_error}
\zeta_1 + \zeta_2 = -ih\xi + ir_h
\end{equation}
provided that $r_h\in \R^d$ and $|r_h| = o(h)$ as $h\to 0^+$.
  Here we are following the convention
\begin{equation*}  
\widehat{f}(\xi) =  \mathcal F(f) (\xi)=   \int_{\R^d} f(x) e^{-ix\cdot \xi} \, dx,
\end{equation*}
for the Fourier transform of a function $f\in L^1(\R^d)$.

In order to use \eqref{eq:fourier_from_DNmap} to reconstruct  $q$ in $d=3$, one needs first to know the boundary values of $\psi_{\zeta}^h$. This can be obtained  by solving a boundary integral equation  using only the $\Lambda_q$ map, as shown by Nachman in \cite{nachman88}. From the numerical point of view,  a number of challenges appear when trying to implement this scheme, not  least the exponentially growing and oscillatory nature of the CGO solutions as $h\to 0^+$. See for example \cite{Delbary_2011,DHK12,DelbaryKnudsen14} for the case of $d=3$, and \cite{MuellerSiltanen20,MuellerSiltanenBook} for more references on the numerical aspects of EIT. 

A possible strategy to avoid those difficulties is to proceed as it is customary in Scattering Theory, and introduce the so-called Born approximation $\qe$ of the potential: the linearization of the inverse problem. We can formally define $\qe$ by the expression
\begin{equation}  \label{id:q_exp}
\widehat{\qe}(\xi) =  \lim_{h \to 0^+}   { \left  \langle e_{\zeta_1/h}, (\Lambda_q -\Lambda_0)e_{\zeta_2/h} \right  \rangle}    ,
\end{equation}
 where $\zeta_1,\zeta_2 \in \mathcal V(d)$ are taken as in \eqref{id:zeta_error} and 
\begin{equation}  \label{id:dual}
 \left\langle f,g\right\rangle :=\int_{\partial\Omega}f(x)g(x) \, dS(x),\quad \forall f,g\in L^2(\partial\Omega).
\end{equation}
Here the use of the  duality pairing $\left  \langle \ \centerdot \, , \,\centerdot \,  \right \rangle_{H^{1/2}(\partial \Omega)\times H^{-1/2}(\partial \Omega)}$ is not necessary due to the smoothness of $e_\zeta$.

The Born approximation $\qe$  has been considered mainly as a tool to obtain numerical results in the reconstruction problem, see \cite{BKM11,KnudsenMueller11}.  Note that  a notion of Born approximation  can be introduced also in the case of conductivities. In particular, one can obtain a Born approximation $\gae$ for the conductivity problem from $\qe$ by linearizing \eqref{e:qgamma} (see \cite{BKM11,KnudsenMueller11} for this and other approaches). Therefore, obtaining explicit formulas for \eqref{id:q_exp} in terms of the D-N map also yields to interesting consequences for the  inverse conductivity problem. Note also that the linearization of the conductivity problem was already studied by Calderón in \cite{calderon}. The effectiveness of the Born approximation  has been recently compared  to other  numerical reconstruction methods    in \cite{HIKMTB21}  using synthetic data to simulate real discrete data from electrodes. 

As far as we know, even the existence of the  limit $h\to 0^+$ in \eqref{id:q_exp} is a priori not clear. Also, the limit could depend on the choice of  $\zeta_1,\zeta_2 \in \mathcal V(d)$ satisfying \eqref{id:zeta_error}.
In this work we will address these questions and provide explicit formulas to compute $\widehat{\qe}$ from the matrix elements of the D-N map when $\Omega$ is a ball. 
As we mentioned before, working in the ball does not involve any loss of generality when it comes to solve the reconstruction problem.

With these considerations in mind, in the rest of this article we shall assume that $\Omega $ is the ball $B = \{x\in\R^d: |x| \le 1\}$. 
We will denote the boundary of $B$, the unit sphere, by $\SS^{d-1}$ and consider the family 
\begin{equation} \label{id:Qdef}
\cQ_d:=\{q\in L^\infty(B,\R) \,:\, 0\not\in \Spec_{H^1_0(B)}(-\Delta+q)\}. 
\end{equation}
Note that $\cQ_d$ contains all the $q\in L^\infty(B,\R)$ that are obtained from some $\gamma\in W^{2,\infty}(B,\R_+)$ via \eqref{e:qgamma}. 
Our main object of study is the Dirichlet to Neumann map $\Lambda_q$, indexed by potentials $q\in \cQ_d$,  that associates to a function $g\in C^\infty(\SS^{d-1})$ the normal derivative $\Lambda_q g:=\partial_\nu u|_{\SS^{d-1}}$ of the unique solution $u\in H^1(B)$ of
 \begin{equation}  \label{id:calderon_q}
\left\{
\begin{array}{rlr}
-\Delta u + qu  =& 0  & \text{in }   B,\\
u |_{  \SS^{d-1}} =& g. &    \\
\end{array}\right.
\end{equation}
 
The purpose of this work is to investigate the structure of the operators $\Lambda_q$, partly motivated by the reconstruction problem.
In order to do so, we will first show that one can obtain explicit formulas for  the Born approximation  in terms of matrix elements of the D-N map.  From the numerical point of view, this avoids the difficulties of dealing with the exponentially growing and oscillatory  $e_{\zeta/h}$ functions and the limit $h\to 0^+$. We then examine the structure of the matrix elements of the  operator $\Lambda_q$ in the basis of spherical harmonics. In the particular case of radial potentials, they coincide with the eigenvalues of the D-N map; therefore we will examine this case in detail in order to get some insight and motivate our results for the general case.

\subsection{The Calderón problem on the ball. The radial case}

Recall that spherical harmonics are restrictions to the sphere $\SS^{d-1}$ of the complex homogeneous polynomials of $d$ variables that are harmonic. We denote the subspace of spherical harmonics of degree $k$   by $\H_k$.
Spherical harmonics of different degrees are orthogonal in $L^2(\SS^{d})$. Moreover, if  $\varphi_k \in\H_k$ then:
\begin{equation}\label{e:eigenfs}
-\Delta_{\SS^{d-1}} \varphi_k = k(k+d-2) \varphi_k.
\end{equation}
We denote by $\SO(d)$ the special orthogonal group in dimension $d$. For every $Q\in \SO(d)$ write: 
\begin{equation}\label{e:soaction}
R_Q f(x) :=  f(Q^\T(x)),\quad \forall f\in L^2(\SS^{d-1}).
\end{equation}
Then $R^{ -1}_Q = R_Q^*=R_{Q^\T}$ and one can verify that
\begin{equation} \label{id:DN_invariance_int}
R_Q \Lambda_q R_{Q^\T} =    \Lambda_{R_Q (q)} ,\quad \forall q\in\cQ_d.
\end{equation}
In particular, if  $R_Q q=q$ for every $Q\in \SO(d)$ then $[R_Q ,\Lambda_q]=0$ for any rotation. 

This means that $q$ is a radial function and that $\Lambda_q|_{\H_k}=\lambda_k[q] \Id_{\H_k}$ for every $k\in\N_0$, where $\N_0$ stands for the set of non-negative integers (see \Cref{sec:sphere_d}). Note that, since  $\Lambda_q$ is self-adjoint over $L^2(\SS^{d-1})$, the eigenvalues $\lambda_k[q]$ are real and $\Lambda_q=\chi_q(-\Delta_{\SS^{d-1}})$ for some measurable function $\chi_q$ defined on $[0,\infty)$. 

When $q=0$ one has  $\Spec\Lambda_0=\N_0$ and, in view of \eqref{e:eigenfs}, 
\begin{equation}
\Lambda_0=\chi_0(-\Delta_{\SS^{d-1}}),\quad \chi_0(\sigma):=\sqrt{\sigma-\frac{(d-2)^2}{4}}-\frac{d-2}{2}.
\end{equation}
In fact, if $u$ solves \eqref{id:calderon_q} with $q=0$ and $g = \varphi_k$ for any $\varphi_k \in \H_k$, then $u$ must coincide with the solid spherical harmonic $u(x) = |x|^{k} \varphi_k(x/|x|)$. Since
\[
\partial_\nu u(x)=x\cdot\nabla u(x),\quad x\in\SS^{d-1},
\]
one obtains by differentiation:
\begin{equation} \label{id:lambda_0}
\Lambda_0(\varphi_k) = k \varphi_k.
\end{equation}
Note that when $d=2$, $\Lambda_0$ coincides with the fractional Laplacian  $(-\Delta_{\SS^{d-1}})^{1/2}$.

In the following theorem we prove that the limit \eqref{id:q_exp} converges in the radial case
by providing a closed formula. Moreover, we compute explicitly the difference between the Born approximation and the Fourier transform of the potential. Define the moments of the potential as:
\begin{equation} \label{id:moments_radial}
\sigma_{k,1}[q] = \frac{1}{|\SS^{d-1}|} \int_{B} q(x)|x|^{2k} \, dx,
\end{equation}
where ${|\SS^{d-1}|}$ is the volume of the sphere.
\begin{main-theorem}\label{main_thm:Born_aprox_radial} 
Let $d\ge 3$ and $q \in \cQ_d$ be of the form
 $q = q_0(|\cdot|)$, where $q_0 \in L^\infty([0,1],\R)$. 
Let $\xi\in\R^d$ and $\zeta_1^h,\zeta_2^h \in \mathcal V(d)$  such that $\zeta_1^h + \zeta_2^h = -ih\xi + ir_h$ for $h>0 $ small enough, with $r_h\in\R^d$ and $|r_h|=o(h)$ as $h\to 0^+$. 
Then the Born approximation $\widehat{\qe}(\xi)$ given by \eqref{id:q_exp}  is well-defined. \\ Moreover, if $(\lambda_k[q])_{k\in\N_0}$  denote the eigenvalues of the D-N map then:
\begin{equation} \label{id:qexp_formula_int}
\widehat{\qe}(\xi)
 =  2 \pi^{d/2}  \sum_{k=0}^\infty  \frac{ (-1)^k}{k! \Gamma(k+d/2)}
  \left(\frac{|\xi|}{2}\right)^{2k}   (\lambda_k[q] - k) ,
\end{equation} 
and
\begin{equation} \label{id:q_exp_q}
\widehat{\qe}(\xi) -   \widehat{q}(\xi)  = 2\pi^{d/2}  \sum_{k=0}^\infty \frac{(-1)^k}{k!\Gamma(k + d/2)} \left ( \frac{|\xi|}{2}\right )^{2k} (\lambda_k[q] - k - \sigma_{k,1}[q]).
\end{equation}
If, in addition, $r_h=0$ then no limit needs to be taken in \eqref{id:q_exp}:
\begin{equation}  \label{id:q_exp_exact}
\widehat{\qe}(\xi) =   \left  \langle e_{\zeta_1^h/h}, (\Lambda_q -\Lambda_0)e_{\zeta_2^h/h} \right  \rangle.
\end{equation}
\end{main-theorem}

Identity \eqref{id:q_exp_exact} was conjectured by \cite{BKM11} based on numerical evidence.
Note also that an analogue of \eqref{id:qexp_formula_int} has been given in \cite{KLMS2007} and \cite[equation (41)]{IMS2000}  in the two-dimensional case. 

 We remark that the series \eqref{id:qexp_formula_int} converges absolutely, and thus $\widehat{\qe}(\xi)$ is well defined, provided that $\lambda_k[q]-k$ grows as a power of $k$. This is a well known fact, see however our next result \Cref{main_thm:radial_series} below for more precise spectral asymptotics. On the other hand, note that \eqref{id:qexp_formula_int} is a formal definition of $\qe$ in the following sense: even if the series on the right hand side it is absolutely convergent for all $\xi \in \R^d$, there is not an a priori control of the growth in the $\xi$ variable necessary to define the inverse Fourier transform of \eqref{id:qexp_formula_int}.

Identity \eqref{id:q_exp_q} follows immediately from the fact that whenever $q \in L^1(\R^d)$ is compactly supported, 
\begin{equation} \label{id:fourier_from_moments}
 \widehat{q}(\xi)  = 2\pi^{d/2}  \sum_{k=0}^\infty \frac{(-1)^k}{k!\Gamma(k + d/2)} \left ( \frac{|\xi|}{2}\right )^{2k} \sigma_{k,1}[q],
\end{equation}
where the series on the right is absolutely convergent for all $\xi \in \R^d$, see  \Cref{sec:Born_radial} for a short proof of this formula. The fact that $q$ has compact support is essential here: the above formula is false for general Schwartz class functions.  
The similarity between identities  \eqref{id:fourier_from_moments} and \eqref{id:qexp_formula_int} implies that  the Born approximation satisfies {\it formally} that $\sigma_k[\qe] =\lambda_k[q]-k$, and hence it follows   that the linearization of the Calderón problem for the Schrödinger operator $-\Delta +q$ is a Hausdorff moment problem.

This theorem, whose proof is presented in \Cref{sec:Born_radial}, motivates studying the spectrum of the D-N map in the radial case. Our next results gives a perturbation series for the eigenvalues of $\Lambda_q$ for a radial potential $q\in \cQ_d$. 
\begin{main-theorem} \label{main_thm:radial_series} 
Let $d \ge 2$ and $q \in \cQ_d$ be of the form
 $q = q_0(|\cdot|)$, where $q_0 \in L^\infty([0,1],\R)$. 
Let $\alpha:=\max \esupp q_0$.  Then, for all $k > \alpha \norm{q_0}_{L^\infty([0,1])}^{1/2} - \frac{d-2}{2}$ the eigenvalues $\lambda_k[q]$ of $\Lambda_{q}$ can be written as a series
\begin{equation} \label{id:eigenvalue_series}
\lambda_k[q]   =   k +  \sigma_{k,1}[q]+\sum_{n=2}^\infty \sigma_{k,n}[q], 
\end{equation}
where   the real numbers $\sigma_{k,n}[q]$ can be explicitly computed and satisfy that 
\begin{equation} \label{est:radial_sigma_terms}
 |\sigma_{k,n}[q] | \le  \frac{\alpha^{2(n+k) + d-2}}{2 \left(k + \frac{d-2}{2}\right)^{2n-1}}    \norm{q}_{L^\infty(B)}^n, \qquad n\ge 2.
\end{equation}
\end{main-theorem} 

\begin{remark}
The explicit formula to compute $\sigma_{k,n}[q]$ is identity \eqref{id:series_terms} in \Cref{sec:radial_series}. For instance, the case $n=2$ corresponds to:
\begin{equation} \label{id:momentsn2_radial}
\sigma_{k,2}[q] = \frac{1}{k + \frac{d-2}{2}}\left(\frac{1}{2} \sigma_{k,1}[q]^2 -\int_{0}^\alpha\int_{0}^r q_0(r)q_0(s)s^{2k+d-1}r \, ds\,dr \right),
\end{equation}
\end{remark}
\begin{remark}\label{rem:eigenvest}
Summing the series \eqref{id:eigenvalue_series} and applying the bound \eqref{est:radial_sigma_terms} gives the estimate
\begin{equation} \label{est:moment_aprox_radial}
\left| \lambda_k[q] -k  - \sigma_{k,1}[q]  \right |\leq \alpha^{d+2} \|q\|^2_{L^\infty(B)}\frac{\alpha^{2k}}{2(k + (d-2)/2)^3} , 
\end{equation}
which is of lower order in $k$ than:
\[
|\sigma_{k,1}[q]|\leq \alpha^{d} \|q\|_{L^\infty(B)} \frac{\alpha^{2k}}{2k+d}.
\]
\end{remark}
\begin{remark}
It is well known (see \cite{SylUhl88}) that whenever $q$ is smooth and compactly supported in the interior of $B$, $\Lambda_q$ is a pseudodifferential operator that is smoothing to all orders. \Cref{main_thm:radial_series} quantifies the structure of the exponentially small remainder in the radial case. The fact that the exponential rate depends on the distance of the essential support to the boundary of $B$ was already noticed in \cite{Mand00}.
\end{remark}

Let us note that the spectral theory of D-N operators is a very active subject, see for instance the survey article \cite{GirPol17} and the references therein. See also \cite{daude1,daude2} for recent results in a context closer to that of \Cref{main_thm:radial_series}.

\Cref{main_thm:Born_aprox_radial} shows in particular that from the knowledge of the moments $\sigma_{k,1}[q]$ for all $k\in \N_0$ one can recover completely the Fourier transform of $q$. On the the other hand, \eqref{est:moment_aprox_radial} shows that  the moments  $\sigma_{k,1}[q]$ can be approximated by $\lambda_k[q] -k$ modulo an error of order $k^{-2}\sigma_{k,1}[q]$.
Heuristically  this gives evidence that 
the Born approximation captures the high frequency information of the potential, as it is well known in scattering problems (see \Cref{sec:Born_radial} for more  references). In a similar spirit, the recovery of jumps of discontinuous conductivities from the linearization of the inverse problem has been  recently studied  in the two dimensional case in \cite{GLSSU2018} . 




\subsection{The Calderón problem on the ball. General Schrödinger operators}

We now address the more complex non-radial case when $d=3$. If one tries to follow the strategy we implemented in the radial case, the resulting expression in  \eqref{id:q_exp} involves all matrix elements of $\Lambda_q$, some of them with divergent coefficients as $h \to 0^+$. To avoid this problem we perform an averaging procedure that leads to a convergent expression. This average also has the advantage of reducing the number of matrix elements involved in the final result, yielding  a formula that could be used in practical applications. The averaging procedure is motivated by the following observation. Given $\omega\in\SS^2$ the Fourier transform of $q$ which has the following invariance property
\begin{equation} \label{id:invariance_fourier}
 \widehat{R_Q(q)}(s \omega)  = \widehat{q}(s \omega)  \quad \text{ for all } s>0 \text{ and } Q\in \SO(3)_\omega,  
\end{equation}
where, if $\omega \in \SS^{d-1}$,  the set $\SO(d)_\omega$ is the stabilizer of $\omega$:
\begin{equation} \label{id:stabilizer}
\SO(d)_\omega=\{Q\in\SO(d)\,:\, Q(\omega)=\omega\}.
\end{equation}

On the other hand, one expects  the Born approximation to enjoy the same symmetry properties that the Fourier transform has. Recalling \eqref{id:q_exp} however,  
for every $Q\in \SO(3)_\omega$ and $\zeta_1,\zeta_2$ as in \eqref{id:zeta_error}, one has that
 \begin{equation}    \label{id:rotat_invariace_B}
        \left  \langle e_{ \zeta_1/h}, (\Lambda_{R_Q(q)} -\Lambda_0)e_{  \zeta_2/h} \right  \rangle = \left  \langle e_{ Q^\T(\zeta_1/h)}, (\Lambda_q -\Lambda_0)e_{ Q^\T(\zeta_2/h)} \right  \rangle,
\end{equation}
where the pair $Q^\T(\zeta_1)$, $Q^\T(\zeta_2)$ also satisfies \eqref{id:zeta_error} in place of $\zeta_1,\zeta_2$ since $Q \in \SO(3)_\omega$. This implies that, prior to taking the limit $h\to 0^+$, the Born approximation (or more precisely, the scattering transform) does not necessarily share the invariance property \eqref{id:invariance_fourier}.  In fact, in general,  the right hand side of \eqref{id:rotat_invariace_B}  differs from
\[ 
\left  \langle e_{ \zeta_1/h}, (\Lambda_{q} -\Lambda_0)e_{  \zeta_2/h} \right  \rangle.
\]
This motivates the introduction of an averaged version of the Born approximation. In order to do so, let:
\[
L_{3} := \frac{1}{i} (x_1\partial_{x_2}-x_2\partial_{x_1}),
\]
and, for $\omega\in\SS^2$ define the \textit{angular momentum with respect to} $\omega$ as:
\[
L_\omega = R_{P^\T} L_3 R_P,\quad \text{ where }P\in\SO(3)\text{ satisfies }P\omega = (0,0,1).
\]
This is a self-adjoint operator on $L^2(\SS^2)$ whose spectrum is equal to $\Z$ and whose definition is independent of the particular choice of $P$. Therefore, it generates a unitary group $e^{-itL_\omega}$, $t\in \R$, that is $2\pi\Z$-periodic; in fact it generates the stabilizer of $\omega$:
\begin{equation}\label{e:stabilizer}
    \SO(3)_\omega=\{e^{-itL_\omega} \,:\, t\in [0,2\pi]\},
\end{equation}
see \Cref{sec:spherical_harmonics_d3} for more details. We define the averaged operator:
\begin{equation} \label{id:averaged_operator}
\langle\Lambda_q\rangle_\omega:= \frac{1}{2\pi}\int_0^{2\pi} e^{itL_\omega} \Lambda_q e^{-itL_\omega} dt.
\end{equation}
Note that, by construction, $[\langle\Lambda_q\rangle_\omega,L_\omega]=0$. The operator $\langle\Lambda_q\rangle_\omega$ is known as the quantum average of $\Lambda_q$ along the flow $e^{-itL_\omega}$. This construction is the starting point for implementing the averaging method in quantum mechanics that goes back to \cite{Wein77} and has been used extensively in the spectral theory, see among many others, \cite{Guill78, Ur84, Ur85, OjVil12, GuillUrWang12, MaRiv16}.

We are now in position to define formally the \textit{averaged Born approximation}  $\langle  \qe \rangle $   by the formula
\begin{equation}   \label{id:qexpA}
 \widehat{ \langle \qe \rangle} (\xi )   
=  \lim_{h\to 0^+} \langle e_{\zeta_1/h},   (\langle\Lambda_q\rangle_{\xi/|\xi|} -\Lambda_0) e_{\zeta_2/h}\rangle,
\end{equation}
for any $\zeta_1,\zeta_2 \in \cV(3)$ such that $\zeta_1 + \zeta_2 = -ih\xi + ir_h$, with $r_h\in \R^d$ and $|r_h| = o(h)$ as $h\to 0^+$. As in the case of the Born approximation, the existence of this limit (and the  possible dependence on the choice of $\zeta_1,\zeta_2$) is not clear \textit{a priori}. The motivation behind this definition follows from the next remark.
\begin{remark}\label{rem:dtnav}
Note that, because of \eqref{id:DN_invariance_int} and \eqref{e:stabilizer},
\begin{equation} \label{id:first_identity}
\langle\Lambda_q\rangle_{\omega} = \frac{1}{2\pi}\int_0^{2\pi}\Lambda_{e^{itL_\omega}(q)}\,dt  = \int_{\SO(3)_\omega} \Lambda_{R_Q(q)}  \, d\mu(Q),
\end{equation}
where $\mu$ is the normalized Haar measure of $\SO(3)_\omega\simeq \SO(2)$. Therefore, using \eqref{id:rotat_invariace_B}, one concludes that:
\[
 \langle e_{\zeta_1/h},   (\langle\Lambda_q\rangle_{\omega} -\Lambda_0) e_{\zeta_2/h}\rangle= \int_{\SO(3)_\omega}\left  \langle e_{ Q^\T(\zeta_1/h)}, (\Lambda_q -\Lambda_0)e_{ Q^\T(\zeta_2/h)} \right  \rangle d\mu(Q),
\]
 where $\omega = \xi/|\xi|$. This justifies the terminology of averaged Born approximation:  we are just inserting an average before taking  the limit in \eqref{id:q_exp} over all the rotations of $\zeta_1$ and $\zeta_2$ that preserve \eqref{id:zeta_error}.
\end{remark}
The previous identities also show that  for all  $Q\in \SO(3)_\omega$ and $s>0$
 \begin{equation*}  
 \widehat{\langle h_{\mathrm{exp}} \rangle }(s \omega)  = \widehat{\langle \qe \rangle }(s \omega)  \quad \text{if } h= R_Q(q).
\end{equation*}
This is the analogue of \eqref{id:invariance_fourier}.
Moreover, \Cref{rem:dtnav} implies that, as soon as $q$ is invariant by $\SO(3)_\omega$
\[
   \widehat{ \langle \qe \rangle} (s\omega )   = \widehat{\qe}(s\omega  ) ,\qquad \text{for all } s>0.
\]

As we will later justify, this averaging procedure does not destroy any relevant information on the potential $q$. 
In order to state the main result of this section, we specify an orthonormal basis of spherical harmonics, namely the one that diagonalizes the angular momentum operators $L_\omega$. Since $L_\omega$ commutes with $\Delta_{\SS^{2}}$, it turns out that the self-adjoint operator $L_\omega$ leaves $\mathfrak{H_\ell}$ invariant. It can be proved that the spectrum of $L_\omega|_{\mathfrak{H_\ell}}$ is simple and is given by the integers $m$ such that $-\ell\leq m \leq \ell$. We will denote $Y_{\ell,m}^\omega$, with $-\ell\leq m \leq \ell$, the corresponding basis of eigenfunctions:
\[
L_\omega Y_{\ell,m}^\omega = m Y_{\ell,m}^\omega,\qquad Y_{\ell,m}^\omega\in\H_\ell.
\]
More details on the angular momentum and the specific choice of the $Y_{\ell,m}^\omega$ eigenfunctions are provided in   \Cref{sec:spherical_harmonics_d3}.

 We start by defining the following matrix elements of the D-N map
\begin{equation}   \label{id:matrix_elements} 
 \lambda_{k,\ell;\omega}[q] = \int_{\IS^{2}} \ol{Y_{\ell,k}^\omega}(x) \Lambda_q Y_{k,k}^\omega(x) \, dS(x)  \quad \text{for }  k,\ell \in \N_0,\; \ell \ge k,   
\end{equation}
for any $\omega \in \SS^2$, and we now define the following moments of $q$
\begin{equation} \label{id:moments}
\m_{k,\ell;\omega}[q] = \int_{B} |x|^{\ell + k} q(x)   \ol{Y_{\ell,k}^\omega}\left(\frac{x}{|x|}\right)  Y_{k,k}^\omega \left(\frac{x}{|x|}\right)\, dx  .
\end{equation}
Notice that  $\m_{k,\ell;\omega}[q] $ is a matrix element in the solid harmonics  of the multiplication operator associated to the potential $q$.

Take $\xi \in \R^3/\{0\}$. Let $\{\eta_1,\eta_2,\omega\}$ be a positively oriented orthonormal basis of $\R^3$, where  $\omega = \xi/|\xi|$. We fix $\zeta_1,\zeta_2 \in \mathcal V(3)$ by
\begin{equation} \label{eq:zeta_h_nonradial}
\begin{alignedat}{1}
  \zeta_1 &=  -\eta_1 - i\left( 1 -    h^2|\xi|^2 \right )^{1/2}\eta_2   -   ih |\xi|\omega,  \\
 \zeta_2 &=  \eta_1 + i\eta_2  .
 \end{alignedat}
\end{equation}
Notice that $\zeta_1 + \zeta_2 = -ih\xi + ir_h$, with   $|r_h| = o(h)$ as $h\to 0^+$, so \eqref{id:zeta_error} is satisfied. 
For the sake of simplicity we also introduce the constants
\begin{equation} \label{id:mu}
\mu_{k,\ell} =  \frac{4\pi  }{\sqrt{(2k+1)(2\ell+1)}} \frac{  1}{ \sqrt{ (2k)!(\ell-k)! (k+\ell)!}},  \qquad \ell \ge k.
\end{equation}   

Our last main result is a closed formula for the averaged Born approximation.
\begin{main-theorem}  \label{main_thm:qexpA}
Consider a potential $q \in \cQ_3$. Take $\xi \in \R^3\setminus\{0\}$ and let $\omega := \xi/|\xi|$. Assume $   \widehat{ \langle \qe \rangle}(\xi)$ is given by \eqref{id:qexpA} with $\zeta_1$ and $\zeta_2$ as in \eqref{eq:zeta_h_nonradial}. 
If $\lambda_{k,\ell;\omega}[q]$ are given by \eqref{id:matrix_elements} then
\begin{equation}  \label{id:qexpA_int}
  \widehat{ \langle \qe \rangle} (\xi) =   
  \sum_{k=0}^\infty   \sum_{\ell=k}^\infty   (-i)^{\ell+k}  \mu_{k,\ell}  \, |\xi|^{\ell+k}  (\lambda_{k,\ell;\omega}[q] - k \delta_{k,\ell})    ,
\end{equation}
where $\delta_{k,\ell}$ denotes the Kronecker delta.  
Also, the $\m_{k,\ell;\omega}[q]$ coefficients defined in \eqref{id:moments} are the linearization of the matrix elements $\lambda_{k,\ell;\omega}[q] - k \delta_{k,\ell}$, and   one has that \footnote{Throughout the paper we write $a \lesssim b$  when $a$ and $b$ are positive constants and there exists $C > 0$ so that $a \leq C b$. We refer to $C$ as the implicit constant in the estimate.}
\begin{multline}  \label{est:asymp_k_int}
\left| \lambda_{k,\ell;\omega}[q] - k \delta_{k,\ell} - \m_{k,\ell;\omega}[q] \right|   \\     
\lesssim   \frac{\alpha^{k+\ell}}{(\ell +1)\sqrt{k+1}}   \norm{ q}_{L^\infty (\R^3)}^2 \left (1 +  \norm{ q}_{L^\infty (\R^3)}\norm{\mathcal (-\Delta  + q)^{-1}}_{L^2(B) \to L^2(B)} \right ) ,
\end{multline}
where  $\alpha:=\max_{x\in \esupp q} |x| $.  
Moreover, for all functions $q\in L^1(\R^3)$ with compact support the following holds
\begin{equation} \label{id:Fourier_representation_int}
\widehat{q} (\xi)   =   
     \sum_{k=0}^\infty  \sum_{\ell=k}^\infty   (-i)^{\ell+k}  \mu_{k,\ell}  \, |\xi|^{\ell+k} \, \m_{k,\ell;\omega}[q] .
\end{equation}
\end{main-theorem}

This theorem shows that the limit \eqref{id:qexpA} is well defined, and \eqref{est:asymp_k_int} and \eqref{id:Fourier_representation_int} show that the average used to define $   \widehat{\langle \qe \rangle}(\xi)$  does not  destroy any relevant information on the potential. 
  As expected, in the radial case one can recover \eqref{id:qexp_formula_int} from formula \eqref{id:qexpA_int}, since $ 2^{2k} k! \Gamma\left(k + 1/2 \right) = \sqrt{\pi} (2k)!$. 
Also, notice that in the case $k=\ell$ the moment  
$\m_{k,k;\omega}[q] $ behaves sharply as $\alpha^{2k}(2k)^{-1}$   |to see this just assume $q$ is the indicator function of the ball $B_\alpha$ with $\alpha\le 1$| so estimate \eqref{est:asymp_k_int}  implies an extra decay of $k^{-1/2}$ in the right hand side.

  Again, we remark that the series \eqref{id:Fourier_representation_int} is absolutely convergent provided $q$ has compact support and this formula fails for general functions in the Schwartz class.
 The right hand side of \eqref{id:qexpA_int} is also an absolutely convergent series, but there is no control on the growth of $ \widehat{ \langle \qe \rangle} (\xi) $ as $|\xi| \to \infty$ to guarantee that it is   a tempered distribution. Therefore we need to consider \eqref{id:qexpA_int} just as a formal definition of $  \langle \qe \rangle   $.


 Let us note that the results in Theorems \ref{main_thm:Born_aprox_radial} -- \ref{main_thm:qexpA} 
 can be used as the basis of  efficient numerical algorithms to reconstruct the Born approximations $\gae$ and $\qe$. This can be used to improve and simplify   the algorithms to reconstruct the potential and the conductivity from the D-N map.  
This is the subject of the article \cite{numerical}.

\subsection{Structure of the article} \Cref{sec:material} presents some well-known facts on spherical harmonics, the action of the group of rotations on the sphere and angular momentum operators that will be used throughout the paper. 
In \Cref{sec:Born_radial} we prove \Cref{main_thm:Born_aprox_radial}, whereas \Cref{sec:main_Born_section_gen} is devoted to the proof of identity \eqref{id:qexpA_int}. 
In \Cref{sec:spectrum} we study the spectrum and matrix elements of the D-N map. 
There we prove \Cref{main_thm:radial_series} and formula \eqref{est:asymp_k_int}. 
Finally, \Cref{sec:fourierTF} presents the proof of the identity \eqref{id:Fourier_representation_int} for the Fourier transform and completes the proof of \Cref{main_thm:qexpA}.

\subsection{Acknowledgments}
This research has been supported by Grant MTM2017-85934-C3-3-P of Agencia Estatal de Investigación (Spain).

\section{A few preliminary facts on spherical harmonics} \label{sec:material}

In this section we summarize some properties of the angular momentum operators and of the spherical harmonics which will be useful later on. All the properties stated here without proofs are well known (see for example  \cite{atkinson,steinweiss,taylor_NCHA}).

\subsection{Spherical harmonics} \label{sec:spherical_harmonics}

The eigenfunctions of  $\Delta_{\SS^{d-1}}$ acting on $L^2(\SS^{d-1})$ are  the restriction to the sphere of the complex homogeneous polynomials of $d$ variables that are harmonic. 
These functions are called the spherical harmonics; the set of spherical harmonics of degree $k$ is denoted by $\H_k$.
Spherical harmonics of different degrees are orthogonal in $L^2(\SS^{d-1})$. If $\varphi\in\H_k$ then:
\begin{equation*}
-\Delta_{\SS^{d-1}} \varphi = k(k+d-2) \varphi.
\end{equation*}
The multiplicity of the eigenvalue $k(k+d-2)$ is equal to:
\[
\dim \H_k=  \binom{d-1+k}{k} -\binom{d+k-3}{k-2},
\]
which if $d=2$ gives $\dim \H_k = 2$, and if $d=3$ gives $\dim \H_k = 2k+1$. One also has that:
\[
\H=\bigoplus_{k=0}^\infty \H_k.
\]
is dense in $L^2(\SS^{d-1})$, and that $\H_k$ is spanned by the restriction to the sphere of the polynomials:
\[
(\zeta \cdot x)^k,\quad \text{ with } \zeta \in \mathcal V(d) ,  
\]
where $\mathcal V(d)$ was defined in  \eqref{id:Vd}. This particular class of spherical harmonics will play an important role in this work, as  we will later see in \Cref{sec:Born_radial,sec:main_Born_section_gen}.
\begin{lemma} \label{lemma:prod_spherial_harmonics}
 Let $d\ge 3$ and $\zeta_1 ,\zeta_2 \in \mathcal V(d) $. Then, for all $k \in \N_0$ we have that
\begin{equation} \label{id:zeta_int}
 \int_{\SS^{d-1}} (\zeta_1 \cdot x)^{k} (\zeta_2 \cdot x)^{k} \, dS(x)  
 =   c_k  \left(\frac{\zeta_1 \cdot \zeta_2}{2} \right )^{k}    ,
\end{equation}
where
  \begin{equation} \label{id:c_k}
 c_k = 2\pi^{d/2}  \frac{k!}{ \Gamma(k+d/2)}  .
\end{equation}
\end{lemma}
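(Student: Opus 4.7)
The idea is to view both sides of \eqref{id:zeta_int} as polynomials in $(\zeta_1,\zeta_2)$ (dropping momentarily the constraint $\zeta_i\in\mathcal V(d)$), determine the shape of the LHS by an invariance argument, and fix the single remaining constant by a concrete evaluation. Set
\[
\phi(\zeta_1,\zeta_2) := \int_{\SS^{d-1}} (\zeta_1 \cdot x)^k (\zeta_2 \cdot x)^k \, dS(x), \qquad \zeta_1,\zeta_2\in \IC^d.
\]
Then $\phi$ is a polynomial bi-homogeneous of bi-degree $(k,k)$, and is invariant under the diagonal action $(\zeta_1,\zeta_2)\mapsto (Q\zeta_1,Q\zeta_2)$ of $Q\in \O(d)$, because the change of variables $x \mapsto Q^\T x$ on $\SS^{d-1}$ leaves $dS$ unchanged. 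By the first fundamental theorem of invariant theory for $\O(d)$, every polynomial with these two properties is a polynomial in the three basic invariants $\zeta_1\cdot\zeta_1$, $\zeta_2\cdot\zeta_2$ and $\zeta_1\cdot\zeta_2$, and the bi-degree constraint then forces it to take the form
\[
\phi(\zeta_1,\zeta_2) = \sum_{0\le a \le k/2} \kappa_a\,(\zeta_1\cdot\zeta_1)^a(\zeta_2\cdot\zeta_2)^a(\zeta_1\cdot\zeta_2)^{k-2a}.
\]
Restricting to $\mathcal V(d)\times\mathcal V(d)$, where $\zeta_i\cdot\zeta_i = 0$, kills every term with $a\ge 1$ and leaves $\phi(\zeta_1,\zeta_2)=c\,(\zeta_1\cdot\zeta_2)^k$ for a single constant $c$ depending only on $d$ and $k$.

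To pin down $c$ I would specialize to the pair $\zeta_1 = (1,i,0,\ldots,0)$, $\zeta_2 = (1,-i,0,\ldots,0)$, both of which lie in $\mathcal V(d)$. One has $\zeta_1\cdot\zeta_2 = 2$ and $(\zeta_1\cdot x)(\zeta_2\cdot x) = x_1^2+x_2^2$, so the claim reduces to
\[
c\cdot 2^k = \int_{\SS^{d-1}} (x_1^2+x_2^2)^k\,dS(x).
\]
This last integral I would evaluate by integrating out $(x_3,\ldots,x_d)$ using the standard slicing formula
\[
\int_{\SS^{d-1}} f(x_1,x_2)\,dS = |\SS^{d-3}|\int_{x_1^2+x_2^2\le 1} f(x_1,x_2)\,(1-x_1^2-x_2^2)^{(d-4)/2}\,dx_1\,dx_2,
\]
valid for $d\ge 3$, then switching to polar coordinates on the disk and recognizing the remaining radial integral as $\tfrac12 B(k+1,(d-2)/2)$. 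Combining this with $|\SS^{d-3}| = 2\pi^{(d-2)/2}/\Gamma((d-2)/2)$ collapses the Gamma factors to yield exactly $c_k = 2\pi^{d/2}\,k!/\Gamma(k+d/2)$, so $c = c_k/2^k$ and the lemma follows.

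The only non-routine step is the appeal to the first fundamental theorem of invariant theory. A slightly more self-contained alternative, which I would fall back on if wishing to avoid that machinery, is to observe that $(\zeta\cdot x)^k$ lies in $\H_k$ for $\zeta\in\mathcal V(d)$ (since $\zeta\cdot\zeta=0$ makes it harmonic) and then to apply Schur's lemma to the irreducible $\O(d)$-representation $\H_k$: any two $\O(d)$-invariant bilinear forms on $\H_k$ must be proportional, and the Fischer pairing is easy to compute directly from $(\zeta_1\cdot\partial)^k (\zeta_2\cdot x)^k = k!\,(\zeta_1\cdot\zeta_2)^k$. Either way, the final numerical constant is fixed by the same sphere integral, which is the only quantitative ingredient.
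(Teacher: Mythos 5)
Your proposal is correct, but it takes a genuinely different route from the paper. The paper exploits the fact that $(\zeta\cdot x)^k|_{\SS^{d-1}}\in\H_k$ together with $\Lambda_0\varphi_k = k\varphi_k$ and Green's formula to convert the sphere integral into a ball integral; differentiating the integrand once and passing to polar coordinates then gives the two-term recurrence $I_k = (\zeta_1\cdot\zeta_2)\,\frac{k}{2k+d-2}\,I_{k-1}$ with $I_0=|\SS^{d-1}|$, which is solved directly. Your argument instead determines the \emph{shape} of the answer by symmetry (either the first fundamental theorem of invariants for $\O(d)$, or Schur's lemma applied to the irreducible module $\H_k$ and the Fischer/apolar pairing, where $(\zeta_1\cdot\partial)^k(\zeta_2\cdot x)^k = k!(\zeta_1\cdot\zeta_2)^k$ is immediate), and then fixes the single constant by one explicit evaluation at $\zeta_1=(1,i,0,\dots)$, $\zeta_2=(1,-i,0,\dots)$ via the slicing formula and a Beta integral. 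Both are sound; I checked that your slicing computation indeed collapses to $c_k = 2\pi^{d/2}k!/\Gamma(k+d/2)$. The paper's route is entirely self-contained within tools it has already set up ($\Lambda_0$, Green's identity, polar coordinates) and produces the constant as a byproduct of the recurrence; yours is conceptually cleaner about \emph{why} only $(\zeta_1\cdot\zeta_2)^k$ can appear, at the price of invoking invariant theory (or needing the irreducibility of $\H_k$ and self-duality for the Schur variant), and still requires one explicit sphere integral to normalize.
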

\begin{proof}
Let $k\in\N_0$ and define
\[I_k :=   \int_{\SS^{d-1}} (\zeta_1 \cdot \theta)^{k} (\zeta_2 \cdot \theta)^{k} \, dS(\theta) .\]
Then, by \eqref{id:lambda_0} and Green formula we have that
\[
 kI_k  =  \int_{\SS^{d-1}} \Lambda_0\left( (\zeta_1 \cdot \centerdot )^{k} \right )(\zeta_2 \cdot \theta)^{k} \, dS(\theta)
 = \int_{B} \nabla(\zeta_1 \cdot x)^{k} \nabla(\zeta_2 \cdot x)^{k} \, dx.
\]
 Therefore one gets, for $k>0$,
\begin{multline*} 
I_k  
 = k (\zeta_1\cdot \zeta_2 )  \int_{B} (\zeta_1 \cdot x)^{k-1} (\zeta_2 \cdot x)^{k-1} \, dx  \\
 =  k (\zeta_1\cdot \zeta_2 )   \left[ \int_0^1 r^{2k-2} r^{d-1} \, dr \right]  \int_{\SS^{d-1}} (\zeta_1 \cdot \theta)^{k-1} (\zeta_2 \cdot \theta)^{k-1} \, dS(\theta) ,
 \end{multline*} 
 which implies that
 \[
 I_k  =  
 (\zeta_1\cdot \zeta_2 )\frac{k}{2k+d-2} I_{k-1}, \quad \text{for all } k=1,2,\dots 
 \]
Since $ I_0 =  |\SS^{d-1} | $, this shows that the identity \eqref{id:zeta_int} holds with
\[
 c_k = |\SS^{d-1}|   {k!}  \prod_{j=1}^{k}\frac{1}{(j+\frac{d-2}{2})  }.
\]
This finishes the proof since 
\[ 
|\SS^{d-1}|   = \frac{2\pi^{d/2}}{\Gamma(d/2)}, \quad \text{and} \quad  \prod_{j=1}^{k}\frac{1}{(j+\frac{d-2}{2})  } = \frac{\Gamma(d/2)}{\Gamma(k+d/2)},  
\]
which justify \eqref{id:c_k}
\end{proof}
Let $\zeta \in \mathcal V(d)$. If we now  define the spherical harmonics 
\begin{equation} \label{id:Yk}
  Y^k_\zeta(x) = (\zeta \cdot x)^k \qquad x\in \SS^{d-1},
\end{equation}
as a consequence of the previous lemma we have that $\norm{Y^k_\zeta}_{L^2(\SS^{d-1})}^2  = c_k 2^{-k}|\zeta|^{2k} $.
 In the particular case $|\zeta| = \sqrt{2}$ |for example if $\zeta = e_1+ie_2$ with $e_1$ and $e_2$  orthonormal| it follows that 
 \begin{equation} \label{id:Yk_normalization}
 \norm{Y^k_\zeta}_{L^2(\SS^{d-1})}^2  = c_k  .
 \end{equation}

\subsection{Representation of \texorpdfstring{$\SO(d)$}{SO(d)} on \texorpdfstring{$L^2(\SS^{d-1})$}{L2} and angular momentum} \label{sec:sphere_d}

We now briefly recall some properties of the representation of $\SO(d)$ on $L^2(\SS^{d-1})$ introduced in \eqref{e:soaction} (see for example \cite{taylor_NCHA}). Recall that $\so(d)$, the Lie algebra of $\SO(d)$, can be identified to the set of skew-symmetric $d\times d$ matrices. Given $\omega\in\so(d)$, the family $(e^{t\omega})_{t\in\R}$ defines a one parameter subgroup on $\SO(d)$ and $\SO(d)=\{e^\omega\,:\, \omega\in\so(d)\}$. If $f\in C^\infty(\SS^{d-1})$ then:
\[
i\partial_t R_{e^{t\omega}} f =L_{\omega}R_{e^{t\omega}} f,
\]
for some self-adjoint first-order differential operator $L_\omega$ on $L^2(\SS^{d-1})$ that we will call the \textit{angular momentum} associated to $\omega$. Then
\begin{equation}\label{id:exponential_map}
    e^{-itL_\omega}f = R_{e^{t\omega}} f,\qquad \forall f\in L^2(\SS^{d-1}).
\end{equation}
Let $\mathcal{A}$ denote an operator on $L^2(\SS^{d-1})$ such that the space $C^\infty(\SS^{d-1})$ is dense in $D(\A)$ and mapped into itself by $\A$.
Then for every $\omega \in \so(d)$ one has that
\begin{equation*}
    [\mathcal{A},e^{-itL_\omega}]=0, \; \forall t \in \R \quad \text{ if and only if } \quad  [\mathcal{A},L_\omega]=0.
\end{equation*}
This is the case for $\mathcal{A}=\Delta_{\SS^{d-1}}$, and hence it follows that 
\begin{equation*}
e^{-itL_\omega},\, L_\omega :  \H_k \To \H_k,\quad \forall k\in\N_0,\;\forall \omega\in\so(d).   
\end{equation*}
In addition,  using the fact that $\Delta_{\SS^{d-1}}$ can be expressed in terms of the angular momentum operators,  one can prove that an operator $\A$ as above  satisfies
\[
[\A,\Delta_{\SS^{d-1}}]=0,
\]
provided that $[\A,R_Q]=0$ for every $Q\in\SO(d)$. Since  the representation $ Q\mapsto R_Q$ is irreducible when restricted to $\H_k$, Shur's lemma implies that every operator $\mathcal{A}$ that maps $\H_k$ onto itself and commutes with rotations satisfies
\[
\mathcal{A}|_{\H_k}=\lambda_k \Id_{\H_k},\quad \text{ for some } \lambda_k\in\IC.
\]


 \subsection{Spherical harmonics and angular momentum when \texorpdfstring{$d=3$}{d=3} } \label{sec:spherical_harmonics_d3}
 
When $d=3$ the Lie algebra $\so(3)$ is isomorphic to $\R^3$ equipped with the Lie bracket given by the cross product. Because of this, the expression of the angular momentum operators is particularly simple. Any angular momentum operator  is a linear combination of the operators corresponding to the vectors $\{e_1,e_2,e_3\}$ of the canonical basis of $\R^3$:
\begin{equation} \label{id:angular_momentum_cartesian}
\begin{aligned}
L_{1} :=L_{e_1}&= \frac{1}{i} (x_2\partial_{x_3}-x_3\partial_{x_2}) \\
L_{2} :=L_{e_2}&= \frac{1}{i} (x_3\partial_{x_1}-x_1\partial_{x_3}) \\
L_{3} :=L_{e_3}&= \frac{1}{i} (x_1\partial_{x_2}-x_2\partial_{x_1}).
\end{aligned}
\end{equation}
These operators act both on $L^2(\SS^2)$ and $L^2(\R^3)$. 
For simplicity we will denote the operators by the same symbol independently of the space in which they are acting. If $\omega\in\R^3$ then $
    L_\omega = \omega_1 L_1+\omega_2 L_2+\omega_3 L_3$, where  $\omega_1,\dots,\omega_3$, are the components of $\omega$
with respect to the canonical base.
In addition, when $|\omega|=1$ then for every $P\in\SO(3)$ such that $P(\omega) = e_3$ one has:
\begin{equation}\label{e:uequiam}
    L_\omega = R_{P}^* L_3 R_P =  R_{P^\T} L_3 R_P.
\end{equation}
Therefore, all angular momentum operators associated to unitary vectors are unitarily equivalent. 

Consider now the parametrization of $\SS^{2}$ given by the spherical coordinates with north pole $e_3$:
\begin{equation} \label{id:spherical_coord}
x(\theta,\phi) = (\sin \theta \cos\phi,\sin \theta \sin \phi,\cos\theta), \qquad (\theta,\phi) \in (0,\pi)\times(0,2\pi).
\end{equation}  
In these  coordinates the angular momentum operator $L_3$   is given by
\begin{equation} \label{id:angular_momentum_spherical}
\begin{aligned}
L_{3} &=  \phantom{-}\frac{1}{i} \frac{\partial \phantom{\phi}}{\partial \phi }.
\end{aligned}
\end{equation}


Recall that the self-adjoint operatos $L_\omega$ map, for any $k\in\N_0$, the space of spherical harmonics of degree $k$ into itself. We now introduce the specific basis of eigenfunctions of $L_3$ that will be useful for our purposes. 

To that aim, let us introduce a few preliminary facts on the associated Legendre polynomials.
Let $\ell,m \in \N_0$  with $0 \le m \le \ell$, and let $P_{\ell}^m$ be the associated Legendre polynomial
\begin{equation} \label{id:legendre_def}
P_\ell^m (t):=  (1-t^2)^{m/2} \,  \frac{d^{m} \phantom{x}}{dx^{m}}  P_\ell(t), \qquad t\in [-1,1],
\end{equation}
where $P_\ell$ is the Legendre polynomial of degree $\ell$, given by:\footnote{We follow  one of the standard conventions used in quantum mechanics for the associated Legendre polynomials and the spherical harmonics, see for example \cite[Chapter 12]{arfken}.}
\begin{equation} \label{id:demencial}
P_\ell(t) =  \frac{1}{2^\ell \ell!}  \frac{d^\ell \phantom{x}}{dx^\ell}  (t^2-1)^{\ell} =    2^{-\ell}   \sum_{s=0}^{\ell/2}     { (-1)^{s}}     \frac{( 2\ell-2s )!      }{ s! (\ell-s )! ( \ell-2s)!  }  t^{\ell-2s}, \qquad t\in [-1,1],
\end{equation}
see for example \cite[p. 44]{Lebedev}. For later use we remark that $P_\ell(1) = 1$. From  \eqref{id:legendre_def} it follows that
\begin{equation} \label{id:legendre_ell}
P_\ell^\ell(t) = c   (1-t^2)^{\ell/2},
\end{equation}
for an appropriate  constant $c>0$. Definition \eqref{id:legendre_def} can be extended to $-\ell \le m \le 0$  as follows:
\begin{equation*} 
P_\ell^m (t):= (-1)^{m} \frac{(\ell-m)!}{(\ell+m)!} P_\ell^{|m|} (t) \qquad t\in [-1,1].
\end{equation*}
Using the parametrization defined in \eqref{id:spherical_coord},  for $\ell \in\N_0$, $m\in \Z$ such that $-\ell \le m \le \ell$  the spherical harmonic $Y_{\ell,m}: \SS^2 \to \IC$ of degree $\ell$, order $m$  and north pole  $e_3$ is given by the formula
\begin{equation} \label{id:quantum_def_spherical_harmonic} 
Y_{\ell,m} (x(\theta,\phi)) = (-1)^m \sqrt{\frac{(2\ell+1)}{4\pi}}   \sqrt{\frac{(\ell-m)!}{(\ell+m)!}} P^{m}_{\ell}(\cos(\theta)) e^{im\phi}.
\end{equation}
With this convention the family $(Y_{\ell,m})_{\ell\in\N_0,-\ell\leq m \leq\ell}$ is an orthonormal basis of $L^2(\SS^{2})$.
Note that
\begin{equation} \label{id:sh_conj}
\ol{Y_{\ell,m}} (x)  = (-1)^m Y_{\ell,-m} (x)  \qquad -\ell \le m \le \ell,
\end{equation}
since the associated  Legendre polynomials are real.\footnote{The sign convention used here may seem cumbersome, but it is one of the usual conventions in quantum mechanics since it simplifies working with the ladder operators. When $m>0$, the $(-1)^m$ factor appearing in \eqref{id:quantum_def_spherical_harmonic} is known as  Condon–Shortley phase.} It turns out that the functions in this basis are eigenfunctions of $L_3$. In fact, it follows from   \eqref{id:angular_momentum_spherical} and \eqref{id:quantum_def_spherical_harmonic} that:
\begin{equation} \label{id:L3_eigenvalue}
 L_3 Y_{\ell,m} = m Y_{\ell,m}, \qquad \ell\in \N_0,\;-\ell\leq m \leq \ell.
\end{equation}
This shows that the spectrum of $L_3$ is $\Z$, and that the spectrum of $L_3|_{\H_\ell}$ is simple, since $\dim \H_\ell =2\ell +1$. 

The same holds, by virtue of \eqref{e:uequiam}, for any other angular momentum operator $L_\omega$ with $|\omega|=1$. For any given $P\in\SO(3)$ such that $P\omega=e_3$ the functions 
\begin{equation}\label{id:spherical_harmonic_omega}
Y^\omega_{\ell,m}:=R_{P^\T} Y_{\ell,m},\qquad \ell\in\N_0, \; -\ell\leq m \leq \ell,
\end{equation}
form an orthonormal basis of eigenfunctions of $L_\omega$. Different choices for the rotation $P$ give different basis of spherical harmonics. However, any two possible basis $(Y^\omega_{\ell,m})$ and $(\tilde{Y}^\omega_{\ell,m})$ obtained in this way are related by $\tilde{Y}^\omega_{\ell,m}=R_Q Y^\omega_{\ell,m}$ for some $Q\in\SO(3)_\omega$. Therefore, since 
\begin{equation}
    e^{-itL_\omega}\text{ is }2\pi\Z\text{-periodic},\quad \SO(3)_\omega=\{e^{-itL_\omega} \,:\, t\in [0,2\pi]\},
\end{equation}
one can always find $t_Q\in [0,2\pi]$ such that:
\begin{equation} \label{id:rot_behavior_omega}
\tilde{Y}^\omega_{\ell,m} =e^{-it_QL_\omega} Y_{\ell,m}^\omega = e^{-im t_Q} Y_{\ell,m}^\omega \quad \text{for all } \ell\in \N_0, \; -\ell \le m\le \ell. 
\end{equation}
This proves that  $\lambda_{k,\ell;\omega}[q]$ and $\m_{k,\ell;\omega}[q]$ defined respectively in \eqref{id:matrix_elements} and \eqref{id:moments} are actually independent of the choice of the particular rotation used to define $Y_{\ell,m}^\omega$.

\section{The Born approximation: the radial case} \label{sec:Born_radial}

In this section  we prove formula \eqref{id:fourier_from_moments} and  \Cref{main_thm:Born_aprox_radial}.


 We start by showing that the Fourier transform of a radial function $q\in L^1(\R^d)$ with compact support can be expressed in terms of the moments $\sigma_{k,1}[q]$ defined in \eqref{id:moments_radial}.
 If $q(x) = q_0(|x|)$ we have that
\[
\widehat{q}(\xi) = \int_{\R^n} q(x) e^{-ix\cdot\xi} \, dx =\frac{(2\pi)^{d/2}}{|\xi|^{(d-2)/2}} \int_{0}^\infty r^{(d-2)/2}J_{(d-2)/2}(r|\xi|) q_0(r) r \, dr
\]
(see \cite[Theorem 3.10]{steinweiss}), where  the Bessel function $J_\nu(t)$ is given by the series
\begin{equation} \label{id:Bessel}
J_\nu (t) = \sum_{k=0}^\infty \frac{(-1)^k}{k!\Gamma(k + \nu+1)} \left ( \frac{t}{2}\right )^{2k + \nu}  \qquad t\in\R_+,
\end{equation}
with $\R_+ = (0,\infty)$. 
  Using this  with $\nu= (d-2)/2$ in the formula of the Fourier transform, and moving the summation outside the integral |this can be justified provided $q$ has compact support| yields that 
\begin{equation*} 
 \widehat{q}(\xi)  =  \frac{2\pi^{d/2}}{|\SS^{d-1}|}  \sum_{k=0}^\infty \frac{(-1)^k}{k!\Gamma(k + d/2)} \left ( \frac{|\xi|}{2}\right )^{2k}  \int_{B} q(x)|x|^{2k} \, dx.
\end{equation*}
from which \eqref{id:fourier_from_moments} follows directly.
  As mentioned, the compact support of $q$ is essential, since one can easily find a Schwartz class function $f$ in $\R^d$ such that $\widehat{f}$ vanishes in a neighborhood of $\xi =0$, so cannot satisfy \eqref{id:fourier_from_moments}.

The identity \eqref{id:fourier_from_moments} provides a nice interpretation of the formula we obtain for the Born approximation \eqref{id:qexp_formula_int}:  from the point of view of reconstruction, $\widehat{\qe}(\xi)$ can bee seen as the function that one gets when substituting the unknown moments $\sigma_{k,1}[q]$  by the \textit{a priori} known quantity $\lambda_k[q] -k$ in \eqref{id:fourier_from_moments}.

\begin{proof}[Proof of \Cref{main_thm:Born_aprox_radial}] 
 Let $\zeta \in \mathcal V(d)$. One of the key facts that allows  to obtain the formula \eqref{id:qexp_formula_int} is  the following simple property. Using Taylor's expansion of the exponential one gets
\begin{equation} \label{eq:exp_taylor_int} 
e_{\zeta}(x) =  e^{\zeta \cdot x} =
\sum_{k=0}^\infty  \frac{1}{k!}   (\zeta \cdot x)^{k}  ,
\end{equation}  
 where we recall that $(\zeta \cdot x)^{k}$ is a spherical harmonic of degree $k$ (see \Cref{sec:spherical_harmonics}).  
For simplicity we write $\lambda_k = \lambda_k[q]$ in this proof.  Let $\zeta_1 ,\zeta_2 \in \mathcal V(d)$   such that
\begin{equation*} 
 \zeta_1 + \zeta_2 = -ih \xi+ r_h, 
 \end{equation*}
 with  $h>0$
Since $\zeta_1 \cdot \zeta_1 = \zeta_2 \cdot \zeta_2 =0$, we have that
\begin{equation} \label{id:sum_prod}
   \zeta_1 \cdot \zeta_2  = \frac{1}{2}(\zeta_1+\zeta_2)^2 =  -\frac{1}{2} |\xi|^2 h^2 + l_h,\quad l_h:=|r_h|(|r_h|+o(h))=o(h^2).
\end{equation}
On the other hand, as a consequence  of \eqref{eq:exp_taylor_int}  it follows that
\begin{equation} \label{id:taylor_exp_h}
e_{\zeta/h}(x) =  e^{\frac{\zeta}{h}\cdot x} =
\sum_{k=0}^\infty \frac{1}{k!}  \frac{1}{h^k}  (\zeta \cdot x)^{k}  ,
\end{equation}  
and since in the radial case $\Lambda_q$ is diagonal in the spherical harmonics, we have that
\begin{equation*} 
\Lambda_q(e_{\zeta/h})(x) =
\sum_{k=0}^\infty \frac{\lambda_k}{k!}  \frac{1}{h^k}  (\zeta \cdot x)^{k}  .
\end{equation*} 
Hence,    \Cref{lemma:prod_spherial_harmonics} and \eqref{id:sum_prod} yield
\begin{align*}
\left \langle e_{\zeta_1/h}, (\Lambda_q -\Lambda_0) e_{\zeta_2/h} \right \rangle  
   &=  \sum_{k=0}^\infty   \frac{\lambda_k  - k}{(h^k k!)^2}  \int_{\SS^{d-1}} (\zeta_1 \cdot x)^{k} (\zeta_2 \cdot x)^{k} \, dS(x) 
\\ &=  \sum_{k=0}^\infty    c_k \frac{\lambda_k  - k}{(h^k k!)^2} \frac{(\zeta_1 \cdot \zeta_2)^{k}}{2^k}   =  \sum_{k=0}^\infty  (-1)^k c_k   \frac{\lambda_k  - k}{( k!)^2} \left( \left(\frac{|\xi|}{2} \right)^2 - \frac{l_h}{h^2} \right)^{k}  .
\end{align*} 
Note that when $r_h=0$ one has $l_h=0$, the above quantity does not depend on $h$ and \eqref{id:q_exp_exact} follows. 
Otherwise, since $l_h =o(h^2)$, taking limits as $h\to 0^+$ and using \eqref{id:q_exp} and \eqref{id:c_k}  gives directly  \eqref{id:qexp_formula_int}. Combining \eqref{id:qexp_formula_int} with \eqref{id:fourier_from_moments} gives the second identity in the statement. This finishes the proof of the theorem.
\end{proof}


Let us mention a further interesting consequence of \Cref{main_thm:Born_aprox_radial}. The coefficients of the series \eqref{id:qexp_formula_int} satisfy for $k>1$ that
\[a_k := \frac{ (-1)^k}{k! \Gamma(k+d/2)}
  \left(\frac{|\xi|}{2}\right)^{2k}  = - \frac{|\xi|^2}{2k(2k + d-2)} a_{k-1}, \]
 which means, roughly, that $|a_k|$ increases  until $k\sim |\xi|$ and then decreases to 0 as $k\to \infty$. 
Hence, $|a_k|$ attains its (very large) maximum values when $k \sim |\xi|$. This implies that  the eigenvalues making a greater contribution to the value of $\widehat{\qe}(\xi)$ in  \eqref{id:qexp_formula_int} for a fixed $\xi$, are  the $\lambda_k[q]$ with $k\sim |\xi|$. Since $\lambda_k[q] -k$ is closer to $\sigma_{k,1}[q]$ as $k\to \infty$, this suggest that $\widehat{\qe}(\xi)$ will improve as an approximation for $\widehat{q}(\xi)$ as $|\xi| \to \infty$. Thus, the Born approximation should recover the leading discontinuities of the potential. This property is well known in backscattering and in  the fixed angle scattering problems, see among others  \cite{fix,back,PaivarintaSomersalo91,alberto_fix,AlbertoAna05}.  The recover of singularities  has also been studied in   the Calderón problem with $d=2$ in \cite{GLSSU2018,KLMS2007}.

\section{The Born approximation: the general case} \label{sec:main_Born_section_gen}
Here we consider the Born approximation in the general case. We will prove formula \eqref{id:qexpA_int} of \Cref{main_thm:qexpA}.  We restate this result in the following proposition.

 \begin{proposition}  \label{prop:qexpA}
Let $\xi \in \R^3\setminus\{0\}$ and $\omega := \xi/|\xi|$.    Assume  $q\in \mathcal{Q}_3$    and let $\eta_1,\eta_2 \in\SS^{2}$ be such that $\{\eta_1,\eta_2,\omega\}$ is a positively oriented orthonormal basis in $\R^3$. Assume $   \widehat{ \langle \qe \rangle}$ is given by \eqref{id:qexpA} with $\zeta_1$ and $\zeta_2$ as in \eqref{eq:zeta_h_nonradial}.   Then 
\begin{equation*}
  \widehat{ \langle \qe \rangle} (|\xi| \omega) =     
  \sum_{k=0}^\infty   \sum_{\ell=k}^\infty   (-i)^{\ell+k}  \mu_{k,\ell}  \, |\xi|^{\ell+k}  (\lambda_{k,\ell;\omega}[q] - k \delta_{k,\ell})    ,
\end{equation*}
where $ \lambda_{k,\ell;\omega}[q]$ is given by \eqref{id:matrix_elements}  and $\mu_{k,\ell}$ by \eqref{id:mu}.
\end{proposition}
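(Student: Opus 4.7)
First, I would expand both CGO exponentials using the Taylor series $e_{\zeta/h}(x)=\sum_n \frac{1}{n!\,h^n}(\zeta\cdot x)^n$ from \eqref{eq:exp_taylor_int}. Working in coordinates where $\omega=e_3$, $\eta_1=e_1$, $\eta_2=e_2$, one has $\zeta_2\cdot x=\sin\theta\,e^{i\phi}$, so $\zeta_2\cdot x$ is an $L_\omega$-eigenfunction of eigenvalue $1$ and hence $(\zeta_2\cdot x)^k\in\H_k$ is an $L_\omega$-eigenfunction of eigenvalue $k$. By simplicity of $L_\omega|_{\H_k}$ it must be a scalar multiple of the highest-weight vector $Y_{k,k}^\omega$; comparing with \eqref{id:quantum_def_spherical_harmonic} and the Rodrigues formula for $P_k^k$ fixes the constant: $(\zeta_2\cdot x)^k=C_k Y_{k,k}^\omega$ with $C_k:=(-1)^k\,2^{k+1}k!\sqrt{\pi}/\sqrt{(2k+1)!}$.

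Second, I would use the averaging to isolate the matrix elements $\lambda_{k,\ell;\omega}[q]$. Because $[\langle\Lambda_q\rangle_\omega,L_\omega]=0$, the operator $\langle\Lambda_q\rangle_\omega$ preserves the $L_\omega$-eigenspace of eigenvalue $k$, so by \eqref{id:matrix_elements} one has $\langle\Lambda_q\rangle_\omega Y_{k,k}^\omega=\sum_{\ell\geq k}\lambda_{k,\ell;\omega}[q]\,Y_{\ell,k}^\omega$. Pairing with the Taylor expansion of $e_{\zeta_1/h}$ and using that $\int_{\SS^2}fg\,dS=0$ whenever $f\in\H_{\ell'}$ and $g\in\H_\ell$ with $\ell'\neq\ell$ (which holds because $\overline{\H_\ell}=\H_\ell$ and the $\H_\ell$ are pairwise $L^2$-orthogonal), only the degree-$\ell$ piece of $e_{\zeta_1/h}$ contributes to the $Y_{\ell,k}^\omega$ inner product; this yields
\[
\langle e_{\zeta_1/h},\langle\Lambda_q\rangle_\omega e_{\zeta_2/h}\rangle=\sum_{k=0}^\infty\sum_{\ell\geq k}\frac{C_k}{k!\,\ell!\,h^{k+\ell}}\,\lambda_{k,\ell;\omega}[q]\,I_{\ell,k}(h),
\]
where $I_{\ell,k}(h):=\int_{\SS^2}(\zeta_1\cdot x)^\ell Y_{\ell,k}^\omega\,dS$.

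Third, I would compute $\lim_{h\to 0^+} h^{-(k+\ell)}I_{\ell,k}(h)$. In the same coordinates, $\zeta_1\cdot x=-\sin\theta(\cos\phi+i\rho\sin\phi)-i\tau\cos\theta$ with $\tau=h|\xi|$ and $\rho=\sqrt{1-\tau^2}$. Rewriting $\cos\phi+i\rho\sin\phi=\frac{1}{2}\bigl((1+\rho)e^{i\phi}+(1-\rho)e^{-i\phi}\bigr)$ and applying the binomial theorem twice, the $\phi$-integration against $e^{ik\phi}$ selects the exponent $n=(\ell-j+k)/2$ in the inner binomial, forcing the single power $h^{j+2n}=h^{\ell+k}$ on every surviving term (because $1-\rho\sim\tau^2/2$ as $h\to 0^+$). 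The remaining $\theta$-integrals, of the form $\int_{-1}^1 (1-u^2)^n u^j P_\ell^{(k)}(u)\,du$, are handled by the Rodrigues formula for $P_\ell$ and the beta integral, and a combinatorial identity collapses the $j$-sum, once multiplied by $C_k/(k!\ell!)$, into $(-i)^{\ell+k}\mu_{k,\ell}|\xi|^{\ell+k}$ with $\mu_{k,\ell}$ as in \eqref{id:mu}. The $\Lambda_0$ contribution is immediate from $\Lambda_0|_{\H_k}=k\,\Id$ and produces the $-k\delta_{k,\ell}$ correction.

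The hardest part will be the algebraic identification at the end of the third step: proving that the finite sum over $j$ of products of binomial coefficients and beta-type integrals collapses to the compact prefactor $\mu_{k,\ell}=\frac{4\pi}{\sqrt{(2k+1)(2\ell+1)}\sqrt{(2k)!(\ell-k)!(\ell+k)!}}$ and carries the correct phase $(-i)^{\ell+k}$. A natural approach is to recognize the $j$-sum as a terminating Gauss hypergeometric $_2F_1$ and apply a standard summation, or to handle the $\ell=k$ base case by hand (where only $j=0$, $n=k$ contributes) and induct on $\ell-k$ via the three-term recurrence of the associated Legendre polynomials. A secondary technical point is justifying the interchange of $\lim_{h\to 0^+}$ with the double sum; this will follow from a uniform-in-$h$ bound $|h^{-(\ell+k)}I_{\ell,k}(h)|\lesssim|\xi|^{\ell+k}$ (a by-product of the closed formula) together with the polynomial-in-$\ell$ control on $\lambda_{k,\ell;\omega}[q]$ provided by \eqref{est:asymp_k_int}.
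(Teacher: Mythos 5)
Your proposal follows the paper's skeleton exactly in steps one and two: Taylor-expand the exponentials, identify $(\zeta_2\cdot x)^k$ as a multiple of $Y^\omega_{k,k}$ by the $L_\omega$-eigenvalue argument (your constant $C_k=(-1)^k 2^{k+1}k!\sqrt{\pi}/\sqrt{(2k+1)!}$ agrees with the paper's $(-1)^k c_k^{1/2}$), and use $[\langle\Lambda_q\rangle_\omega,L_\omega]=0$ together with degree-orthogonality to peel off the coefficients $\lambda_{k,\ell;\omega}[q]$ — this is precisely \Cref{lemma:A} and \eqref{id:Born_interm}. Where you diverge is in the central computation of $\lim_{h\to 0^+}h^{-(k+\ell)}\langle Y^\ell_{\zeta_1},Y_{\ell,k}\rangle$. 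The paper's \Cref{lemma:taylor_angular} observes that $\zeta_1=-\O_{h|\xi|}(\widetilde\zeta_2)$ for a rotation $\O_{h|\xi|}$ about $e_1$ with angle $\epsilon$, $\sin\epsilon=h|\xi|$, so $Y^\ell_{\zeta_1}=c_\ell^{1/2}R_{\O_{h|\xi|}}Y_{\ell,\ell}$; Taylor-expanding $R_{\O_{h|\xi|}}=e^{-i\epsilon L_1}$ and writing $L_1=\frac12(L_++L_-)$, the ladder identities \eqref{id:ladder_coeff} make the first $\ell+k-1$ orders vanish by $L^2$-orthogonality, and the order-$(\ell+k)$ term is a single product of ladder coefficients that produces the factor $\sqrt{(2\ell)!/((\ell+k)!(\ell-k)!)}$ and the phase $(-i)^{\ell+k}$ with no further work. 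You instead write $\zeta_1\cdot x$ in spherical coordinates, expand binomially, and reduce to integrals $\int_{-1}^1(1-u^2)^n u^j\,\partial_u^k P_\ell(u)\,du$, planning to collapse the $j$-sum by a ${}_2F_1$ summation or by induction on $\ell-k$. Your power-counting is correct — the $\phi$-constraint $2n'=\ell-j-k$ forces $h^{j+2n}=h^{\ell+k}$ uniformly in $j$, with the sum supported on $0\le j\le\ell-k$, $j\equiv\ell-k\ (\mathrm{mod}\ 2)$ — but the collapse of the resulting finite sum into $\mu_{k,\ell}$ is nontrivial and you leave it unverified, whereas in the ladder-operator route the combinatorics come for free. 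Your approach is legitimate and more elementary in spirit, but considerably heavier; the paper's rotation argument is the structural insight you're missing. Your closing remark on justifying the interchange of $\lim_{h\to 0^+}$ with the double sum is well taken — the paper is terse there too — but note that \eqref{est:asymp_k_int} gives more than polynomial growth of $\lambda_{k,\ell;\omega}[q]$: it gives geometric decay $\alpha^{k+\ell}$ of the centered matrix elements, which together with the $1/\sqrt{(2k)!(\ell-k)!(\ell+k)!}$ in $\mu_{k,\ell}$ easily dominates the sum.
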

The proof will follow from a series of intermediate results and is given at the end of this section.
We start with the following lemma.
\begin{lemma} Let  $\langle\Lambda_q\rangle_{\omega} $ given by \eqref{id:averaged_operator}. Then, for all $\omega \in\SS^2$ and $Q \in \SO(3)$ one has that
\begin{align} 
 \label{id:ADN_invariance_1}   &R_Q \langle\Lambda_q\rangle_{\omega} R_{Q^\T} 
 = \langle\Lambda_{R_Q(q)}\rangle_{Q(\omega)}, \\
\label{id:ADN_invariance_2}
   &[\langle\Lambda_q\rangle_{\omega},L_\omega]  =0.
\end{align}
In addition, if $\lambda_{k,\ell;\omega}[q]$ is given by \eqref{id:matrix_elements}, it holds  that
\begin{equation} \label{id:ADN_invariance_3}
    \left \langle  \ol{Y_{\ell,k}^\omega},\langle \Lambda_{ q}\rangle_{\omega}    Y_{k,k}^\omega \right \rangle = \left \langle  \ol{Y_{\ell,k}^\omega},  \Lambda_{ q}   Y_{k,k}^\omega \right \rangle  = \lambda_{k,\ell;\omega}[q].
\end{equation}
\end{lemma}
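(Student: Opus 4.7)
The plan is to prove the three identities separately, using only the definition \eqref{id:averaged_operator} of the quantum average, the intertwining property \eqref{id:DN_invariance_int} of the D-N map, and the basic properties of the angular momentum operators collected in Section~\ref{sec:material}.

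For \eqref{id:ADN_invariance_1}, the key preliminary step is the conjugation identity
\[
R_Q\, e^{-itL_\omega}\, R_{Q^\T} \;=\; e^{-itL_{Q(\omega)}},\qquad Q\in\SO(3),\ \omega\in\SS^2,\ t\in\R.
\]
This follows from the composition rule $R_A R_B = R_{AB}$ (a direct consequence of \eqref{e:soaction}), the exponential map identity \eqref{id:exponential_map}, and the conjugation rule $Q\, e^{t\omega}\, Q^\T = e^{t Q\omega Q^\T}$ in $\SO(3)$, noting that $Q\omega Q^\T$ corresponds under $\so(3)\simeq\R^3$ to the rotated vector $Q(\omega)$. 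Once this is established, I would insert the identity $R_{Q^\T}R_Q=\Id$ around the factor $\Lambda_q$ in \eqref{id:averaged_operator} and apply \eqref{id:DN_invariance_int} to recover \eqref{id:ADN_invariance_1}.

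For \eqref{id:ADN_invariance_2}, I would use the $2\pi$-periodicity of the group $e^{-itL_\omega}$ stated in \eqref{e:stabilizer}: for every $s\in\R$, the change of variable $t\mapsto t+s$ inside \eqref{id:averaged_operator} shows that $e^{isL_\omega}\langle\Lambda_q\rangle_\omega e^{-isL_\omega}=\langle\Lambda_q\rangle_\omega$. Differentiating at $s=0$ gives $[L_\omega,\langle\Lambda_q\rangle_\omega]=0$.

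For \eqref{id:ADN_invariance_3}, the second equality is just the definition \eqref{id:matrix_elements}, so only the first equality requires proof. I would substitute \eqref{id:averaged_operator} into the pairing, use $L_\omega Y_{k,k}^\omega=kY_{k,k}^\omega$ to write $e^{-itL_\omega}Y_{k,k}^\omega = e^{-ikt}Y_{k,k}^\omega$, and then move the outer factor $e^{itL_\omega}$ onto $\overline{Y_{\ell,k}^\omega}$ through the duality pairing. Because $\langle\cdot,\cdot\rangle$ in \eqref{id:dual} is bilinear (not sesquilinear) and $L_\omega$ is $-i$ times a real first-order operator with no zeroth-order term, its formal transpose with respect to $\langle\cdot,\cdot\rangle$ is $-L_\omega$, hence $(e^{itL_\omega})^t=e^{-itL_\omega}$. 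Using that $R_{P^\T}$ commutes with complex conjugation, \eqref{id:sh_conj} transfers to the rotated basis, giving $\overline{Y_{\ell,k}^\omega}=(-1)^k Y_{\ell,-k}^\omega$, which is an $L_\omega$-eigenfunction with eigenvalue $-k$; therefore applying $e^{-itL_\omega}$ to it yields a compensating phase $e^{ikt}$. The two phases cancel, the integrand reduces to the $t$-independent quantity $\langle \overline{Y_{\ell,k}^\omega},\Lambda_q Y_{k,k}^\omega\rangle = \lambda_{k,\ell;\omega}[q]$, and the integral collapses.

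The main technical point is the sign bookkeeping when switching between the sesquilinear $L^2$ inner product (under which $L_\omega$ is self-adjoint) and the bilinear pairing $\langle\cdot,\cdot\rangle$ (under which the transpose of $L_\omega$ is $-L_\omega$); this sign is exactly what allows the two exponential phases in the proof of \eqref{id:ADN_invariance_3} to cancel rather than double.
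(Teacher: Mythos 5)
Your proposal is correct and takes essentially the same approach as the paper: the same conjugation rule for the angular-momentum flow together with \eqref{id:DN_invariance_int} for \eqref{id:ADN_invariance_1}, the same $2\pi$-periodicity/change-of-variable argument for \eqref{id:ADN_invariance_2}, and the same phase cancellation (eigenvalue $-k$ for $\ol{Y_{\ell,k}^\omega}$ against eigenvalue $k$ for $Y_{k,k}^\omega$, via the transpose rule for $e^{-itL_\omega}$ under the bilinear pairing) for \eqref{id:ADN_invariance_3}. The only cosmetic deviations are that you conjugate $\Lambda_q$ directly inside the time integral rather than passing through the equivalent representation \eqref{id:first_identity}, and that you obtain \eqref{id:ADN_invariance_2} by differentiating at $s=0$ instead of invoking the equivalence $[\A,L_\omega]=0\iff [\A,R_Q]=0$ for all $Q\in\SO(3)_\omega$ recalled in \Cref{sec:sphere_d}.
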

\begin{proof}
By \eqref{id:DN_invariance_int} and the first identity in \eqref{id:first_identity} it follows that
\[
    R_Q \langle\Lambda_q\rangle_{\omega} R_{Q^\T}  = \frac{1}{2\pi} \int_0^{2\pi} R_{Q} \Lambda_{e^{itL_\omega}(q)} R_{Q^\T} \, dt 
    =\frac{1}{2\pi} \int_0^{2\pi}   \Lambda_{R_Q\left( e^{itL_\omega}(q)\right)}  \, dt.
\]
 Now, since it holds that $R_Q L_\omega = L_{Q(\omega)} R_Q$ we also have that $R_Q e^{itL_\omega} = e^{itL_{Q(\omega)}} R_Q$. Thus
 \[
    R_Q \langle\Lambda_q\rangle_{\omega} R_{Q^\T}  
    =\frac{1}{2\pi} \int_0^{2\pi}   \Lambda_{ e^{itL_{Q(\omega)}}(R_Q(q))}  \, dt
    = \langle \Lambda_{R_Q(q)}\rangle_{Q(\omega)},
\]
where the last identity  follows again from   \eqref{id:first_identity}. This proves \eqref{id:ADN_invariance_1}. 

As we have seen in \Cref{sec:sphere_d}, the identity \eqref{id:ADN_invariance_2} holds if and only if    
$[\langle\Lambda_q\rangle_{\omega},R_Q]  =0$ for all $Q\in \SO(3)_\omega$. This in turn is equivalent to show that   $R_Q\langle\Lambda_q\rangle_{\omega}R_{Q^T} = \langle\Lambda_q\rangle_{\omega}$ for all $Q\in \SO(3)_\omega$. Now, for all $Q\in \SO(3)_\omega$, there is a $t_Q \in [0,2\pi]$ such that  $R_Q = e^{it_QL_{\omega}}$. Then, as in the proof of \eqref{id:ADN_invariance_1}, we have
\[
    R_Q \langle\Lambda_q\rangle_{\omega} R_{Q^\T}  
    =\frac{1}{2\pi} \int_0^{2\pi}   \Lambda_{R_Q\left( e^{itL_\omega}(q)\right)}  \, dt 
    =\frac{1}{2\pi} \int_0^{2\pi}   \Lambda_{ e^{i(t+t_Q)L_\omega}(q)}  \, dt = \langle\Lambda_q\rangle_{\omega},
\]
where the last identity follows from the $2\pi$-periodicity of the flow $e^{itL_\omega}$. This finishes the proof of \eqref{id:ADN_invariance_2}.

We now prove \eqref{id:ADN_invariance_3}. Using the definition  \eqref{id:averaged_operator} we have
\begin{multline*}
    \left \langle  \ol{Y_{\ell,k}^\omega},\langle \Lambda_{ q}\rangle_{\omega}    Y_{k,k}^\omega \right \rangle  
    = \frac{1}{2\pi} \int_0^{2\pi} \left \langle  e^{-it L_\omega } \ol{Y_{\ell,k}^\omega},  \Lambda_{ q} e^{-it L_\omega } Y_{k,k}^\omega \right \rangle  \, dt   \\
    =  \frac{1}{2\pi} \int_0^{2\pi}  e^{-it k } e^{+it k }  \left \langle   \ol{Y_{\ell,k}^\omega},  \Lambda_{ q}  Y_{k,k}^\omega \right \rangle  \, dt   
    =  \left \langle   \ol{Y_{\ell,k}^\omega},  \Lambda_{ q}  Y_{k,k}^\omega \right \rangle
\end{multline*}
using \eqref{id:sh_conj} and  \eqref{id:rot_behavior_omega}. This finishes the proof of the lemma.
\end{proof}
One of the advantages of the previous lemma is that it allows to reduce the arguments to the case of $\omega = e_3$, as we will   show later on.
The identity \eqref{id:ADN_invariance_2} states that $\langle\Lambda_{q}\rangle_{e_3}$ commutes with the angular momentum operator $L_3$  defined in \eqref{id:angular_momentum_cartesian}. For convenience we  fix  
\begin{equation} \label{eq:zeta_h_nonradial_e3}
\begin{alignedat}{1}
  \widetilde{\zeta}_1 &=  -e_1 - i\left( 1 -    h^2|\xi|^2 \right )^{1/2}e_2   -   ih |\xi|e_3,  \\
 \widetilde{\zeta}_2 &=  e_1 + ie_2 . 
 \end{alignedat}
\end{equation}
By   \eqref{id:taylor_exp_h}, to compute the right hand side of \eqref{id:qexpA} it is enough to know explicitly how $\langle\Lambda_{q}\rangle_{e_3}  -\Lambda_0$ acts on the spherical harmonics $Y^k_{\widetilde{\zeta}_2}(x) =(\widetilde{\zeta}_2 \cdot x)^k$ for all $k\in\N_0$. These functions can be expressed in terms of spherical harmonics with north pole $e_3$:
\begin{equation} \label{id:sphe_in_standard}
Y_{\widetilde{\zeta}_2}^\ell = (-1)^\ell c_\ell^{1/2} Y_{\ell,\ell}.
\end{equation}
To see this, check that $L_3 Y^\ell_{\widetilde{\zeta}_2} = \ell Y^\ell_{\widetilde{\zeta}_2}$, 
which implies that $Y_{\widetilde{\zeta}_2}^\ell=\alpha_\ell Y_{\ell,\ell}$ for some $\alpha_\ell\in\IC$. Since $|\widetilde{\zeta}_2| =\sqrt{2}$ and $Y_{\ell,\ell}$ is normalized,  necessarily $|\alpha_\ell|^2=c_\ell$ by \eqref{id:Yk_normalization}.  An explicit computation in the spherical coordinates given in \eqref{id:spherical_coord} to find the complex phase yields
\[
(\widetilde{\zeta}_2 \cdot x)^\ell = (x_1+i x_2)^\ell  = (\sin \theta)^{\ell}  e^{i\ell \phi}
\]
which by \eqref{id:legendre_ell}  and \eqref{id:quantum_def_spherical_harmonic}  implies \eqref{id:sphe_in_standard}.

As we will see, the right-hand side of \eqref{id:q_exp} can be computed for any self-adjoint operator $\A$ that commutes with $L_3$. Thus, here we will work with $\A$ in place of $ \langle\Lambda_{q}\rangle_{e_3} -\Lambda_0$. This   extra generality will come handy later on.

\begin{lemma} \label{lemma:A}
Let $\mathcal{A}$ be an operator on $L^2(\SS^{2})$ such that $D(\A)=H^1(\SS^2)$ and $[ \A,L_3] =0$.
Then, if $\widetilde{\zeta}_1,\widetilde{\zeta}_2$ are given by \eqref{eq:zeta_h_nonradial_e3}, we have
 \begin{equation*}
\lim_{h \to 0^+}\left \langle e_{\widetilde{\zeta}_1/h}, \A\,  e_{\widetilde{\zeta}_2/h} \right \rangle    =   
   \sum_{k=0}^\infty  \sum_{\ell=k}^\infty  (-i)^{\ell+k}  \mu_{k,\ell} |\xi|^{\ell+k} \,   \gamma_{k,\ell}    ,
\end{equation*}
where $\mu_{k,\ell}$ satisfies \eqref{id:mu} and
\begin{equation}   \label{id:matrix_elements_A}
 \gamma_{k,\ell}  = \left \langle \ol{Y_{\ell,k}}, \A  Y_{k,k} \right \rangle  \quad \text{for }  k,\ell \in \N_0,\; \ell\ge k. 
\end{equation}
\end{lemma}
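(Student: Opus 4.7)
The plan is to Taylor-expand both exponentials in spherical harmonics, use $[\A,L_3]=0$ to collapse the resulting double sum to a single sum, and evaluate the remaining integrals via a closed-form Wigner small-$d$ matrix element before passing to $h\to 0^+$.

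First, \eqref{eq:exp_taylor_int} together with \eqref{id:sphe_in_standard} gives $e_{\widetilde{\zeta}_2/h}(x)=\sum_{\ell\ge 0}(-1)^\ell c_\ell^{1/2}(h^\ell\ell!)^{-1}Y_{\ell,\ell}(x)$. Since $[\A,L_3]=0$, $\A$ preserves each $L_3$-eigenspace; the eigenspace associated to $\ell$ is spanned by $\{Y_{j,\ell}\}_{j\ge\ell}$, so $\A Y_{\ell,\ell}=\sum_{j\ge\ell}\gamma_{\ell,j}Y_{j,\ell}$. Expanding $e_{\widetilde{\zeta}_1/h}$ likewise and using the bilinear orthogonality $\int_{\SS^2}Y_{k,m}Y_{k',m'}\,dS=(-1)^m\delta_{k,k'}\delta_{m+m',0}$, which is immediate from \eqref{id:sh_conj} and the $L^2$-orthonormality, only the terms with $j=k\ge\ell$ survive the pairing, yielding
$$
\langle e_{\widetilde{\zeta}_1/h},\A\,e_{\widetilde{\zeta}_2/h}\rangle
=\sum_{\ell=0}^{\infty}\sum_{k\ge\ell}\frac{(-1)^\ell c_\ell^{1/2}}{h^{k+\ell}k!\ell!}\,\gamma_{\ell,k}\,J_{k,\ell}(h),\qquad
J_{k,\ell}(h):=\int_{\SS^2}(\widetilde{\zeta}_1\cdot x)^k Y_{k,\ell}(x)\,dS(x).
$$

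The core of the argument is the computation of $J_{k,\ell}(h)$. The key geometric observation is that $\widetilde{\zeta}_1=R_\alpha(-\widetilde{\zeta}_2)$, where $R_\alpha\in\SO(3)$ denotes rotation about $e_1$ by the angle $\alpha$ determined by $\sin\alpha=h|\xi|$ and $\cos\alpha=(1-h^2|\xi|^2)^{1/2}$. Combined with \eqref{id:sphe_in_standard} and \eqref{id:exponential_map} this yields $(\widetilde{\zeta}_1\cdot x)^k=c_k^{1/2}(e^{-i\alpha L_1}Y_{k,k})(x)$, so that $J_{k,\ell}(h)=(-1)^\ell c_k^{1/2}\langle Y_{k,-\ell},e^{-i\alpha L_1}Y_{k,k}\rangle_{L^2(\SS^2)}$ after applying \eqref{id:sh_conj} once more to turn the bilinear integral into a sesquilinear pairing. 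To evaluate this rotation matrix element about $e_1$ I would use the identity $L_1=-e^{-i\pi L_3/2}L_2 e^{i\pi L_3/2}$ (coming from the $\SO(3)$-covariance of angular momenta and the fact that $R_3(\pi/2)e_2=-e_1$) to conjugate $e^{-i\alpha L_1}$ into $e^{i\alpha L_2}$; the $L_3$-phase factors together with the symmetry $d^k_{-\ell,k}(-\alpha)=(-1)^{k+\ell}d^k_{-\ell,k}(\alpha)$ contribute an overall $(-i)^{k+\ell}$, and the remainder is the standard Wigner small-$d$ function $d^k_{-\ell,k}(\alpha)$. The constraints in the Wigner sum force the unique index $s=k+\ell$, collapsing everything to the closed form
$$
d^k_{-\ell,k}(\alpha)=\sqrt{\binom{2k}{k-\ell}}\,\bigl(\cos(\alpha/2)\bigr)^{k-\ell}\bigl(\sin(\alpha/2)\bigr)^{k+\ell}.
$$

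As $h\to 0^+$, $\sin(\alpha/2)=h|\xi|/2+O(h^3)$ and $\cos(\alpha/2)=1+O(h^2)$, hence
$$
J_{k,\ell}(h)=(-1)^\ell(-i)^{k+\ell}c_k^{1/2}\sqrt{\tbinom{2k}{k-\ell}}\,\frac{h^{k+\ell}|\xi|^{k+\ell}}{2^{k+\ell}}+O(h^{k+\ell+2}).
$$
Substituting into the double sum the powers of $h$ cancel exactly in each term, and a short manipulation using $c_k=4\pi\cdot 4^k(k!)^2/(2k+1)!$ identifies the resulting coefficient of $\gamma_{k,\ell}|\xi|^{k+\ell}$ (after relabeling $k\leftrightarrow\ell$ so that the convention $k\le\ell$ of the target formula holds) with $(-i)^{k+\ell}\mu_{k,\ell}$ from \eqref{id:mu}. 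The main technical obstacle I anticipate is justifying the interchange of $\lim_{h\to 0^+}$ with the double sum; for this I would use the uniform-in-$h$ estimate $|J_{k,\ell}(h)|\le c_k^{1/2}\sqrt{\binom{2k}{k-\ell}}(h|\xi|/2)^{k+\ell}$ (valid as soon as $h|\xi|\le 1$) to dominate each term by a multiple of $\mu_{k,\ell}|\xi|^{k+\ell}|\gamma_{k,\ell}|$ independently of $h$, and then invoke dominated convergence. Summability of this majorant combines the super-exponential decay of $\mu_{k,\ell}$ (immediate from Stirling) with the polynomial growth of $|\gamma_{k,\ell}|$ derived from the hypothesis $D(\A)=H^1(\SS^2)$ through Cauchy–Schwarz and the explicit $H^1(\SS^2)$-norms of $Y_{k,k}$ and $Y_{\ell,k}$.
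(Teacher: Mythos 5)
Your proposal follows the paper's overall architecture — Taylor-expand both exponentials into spherical harmonics, use $[\A,L_3]=0$ to kill all but one order index, recognize $\widetilde\zeta_1$ as a rotation of $-\widetilde\zeta_2$ about $e_1$, and evaluate the resulting matrix element — but the core computation is carried out by a genuinely different route. The paper Taylor-expands $e^{-i\epsilon L_1}$ and uses the ladder operators $L_\pm$ to isolate the single term of order $\epsilon^{\ell+k}$ contributing to $\langle Y^\ell_{\zeta_1}, Y_{\ell,k}\rangle$, yielding the leading coefficient plus an $O(h^{\ell+k+1})$ remainder; you instead invoke the closed-form Wigner small-$d$ matrix element (after conjugating the $e_1$-rotation into an $e_2$-rotation with $L_3$-phase factors), obtaining the exact value of the matrix element at every $h$, not just its leading term. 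Your approach buys a cleaner justification of the interchange of $\lim_{h\to 0^+}$ with the double sum — a step the paper handles only implicitly — at the cost of importing the Wigner-$d$ closed form and managing the phase bookkeeping, whereas the paper's ladder-operator computation is self-contained and elementary. Both routes land on the same formula, and I verified that your closed form $d^k_{-\ell,k}(\alpha)=\sqrt{\binom{2k}{k-\ell}}\cos^{k-\ell}(\alpha/2)\sin^{k+\ell}(\alpha/2)$ and your identification of the resulting coefficient with $\mu_{k,\ell}$ via $c_k=4\pi\cdot 4^k(k!)^2/(2k+1)!$ are correct.

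One small inaccuracy in the domination step: with $\sin\alpha=h|\xi|$ one has $\sin(\alpha/2)=\sin\alpha/(2\cos(\alpha/2))\ge h|\xi|/2$, so the claimed bound $\sin(\alpha/2)\le h|\xi|/2$ is in fact reversed. The correct uniform bound for $h|\xi|\le 1$ is $\sin^2(\alpha/2)=\tfrac{1}{2}(1-\sqrt{1-h^2|\xi|^2})\le \tfrac{1}{2}h^2|\xi|^2$, i.e.\ $\sin(\alpha/2)\le h|\xi|/\sqrt{2}$. This only changes your majorant by a factor $(\sqrt{2})^{k+\ell}$ per term, which is harmlessly absorbed by the factorial decay of $\mu_{k,\ell}$, so the dominated-convergence argument goes through; but the inequality as written would not survive scrutiny.
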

\begin{proof} For convenience, in this proof we write $\zeta_i = \widetilde{\zeta}_i$, $i=1,2$.
Since $L_3$ and $\mathcal{A}$ commute, the eigenspaces  of $L_3$  are invariant subspaces of $\A$.  This implies that
\begin{equation} \label{id:vanishing_matrix_elements}
 \left \langle \ol{Y_{\ell',m'} },  \A   Y_{\ell,m}  \right \rangle = 0 \qquad \text{if } m\neq m'.
\end{equation}
By \eqref{id:Yk}, \eqref{id:sphe_in_standard} and \eqref{id:vanishing_matrix_elements} it follows   that
\begin{equation*}
 \A Y^k_{\zeta_2} =  (-1)^k c_k^{1/2} \sum_{\ell =k}^\infty  \gamma_{k, \ell} Y_{\ell,k}  \quad \text{for all } k\in \N_0.
\end{equation*}
    Using \eqref{id:Yk} and \eqref{id:taylor_exp_h}, we obtain that
\begin{equation} \label{id:Born_interm}
\lim_{h \to 0^+}\left \langle e_{\zeta_1/h}, \A\,  e_{\zeta_2/h} \right \rangle   =   
 \lim_{h\to 0^+}\sum_{k=0}^\infty (-1)^k c_k^{1/2} \sum_{\ell=k}^\infty    \frac{ \gamma_{k,\ell}}{ k!\ell!}  \frac{\left \langle  Y^\ell_{\zeta_1},  Y_{\ell,k}  \right \rangle}{h^{k+\ell}}  .
\end{equation} 
The key to compute the last limit is to understand the behavior of $ \left \langle Y^\ell_{\zeta_1},  Y_{\ell,k}  \right \rangle $ when $h \to 0^+$. 
\begin{lemma} \label{lemma:taylor_angular}
Assume that $\zeta_1   \in \mathcal V(3) $ is given by   \eqref{eq:zeta_h_nonradial_e3}. Then, for all $k,\ell \in \N_0$, $\ell \ge k $ we have that
\begin{equation*} 
\left \langle  Y^\ell_{\zeta_1},  Y_{\ell,k}  \right \rangle 
=  (-1)^\ell i^{\ell +k}  \,c_\ell^{1/2}    \sqrt{\frac{(2\ell)!}{ (\ell+k)!(\ell-k)!}}   \left(\frac{|\xi|}{2}\right )^{\ell+k}   h^{\ell+k} + O(h^{\ell+k+1})  ,
\end{equation*}
where $c_\ell^{1/2}$ was defined in \eqref{id:c_k}.
\end{lemma}
We postpone the proof of this lemma to proceed with the proof of \Cref{lemma:A}.
Combining  \Cref{lemma:taylor_angular} with  \eqref{id:Born_interm} we get
 \begin{align*} 
 \lim_{h \to 0^+}\left \langle e_{\zeta_1/h}, \A\,  e_{\zeta_2/h} \right \rangle      &=
    \sum_{k=0}^\infty \sum_{\ell=k}^\infty  (-i)^{\ell +k}  \,(c_\ell c_k)^{1/2}    \frac{ \gamma_{k,\ell}}{ k!\ell!}  \sqrt{\frac{(2\ell)!}{ (\ell+k)!(\ell-k)!}}   \left(\frac{|\xi|}{2}\right )^{\ell+k}    \\
    &=  \sum_{k=0}^\infty  \sum_{\ell=k}^\infty  (-i)^{\ell+k}  \mu_{k,\ell} |\xi|^{\ell+k} \,   \gamma_{k,\ell}    ,
\end{align*}
with
\[
\mu_{k,\ell} =     \frac{ \sqrt{c_\ell c_k}}{ k!\ell!}  \sqrt{\frac{(2\ell)!}{ (\ell+k)!(\ell-k)!}} 2^{-\ell -k}.
\]
To finish the proof we need to show that $\mu_{k,\ell}$ satisfies \eqref{id:mu}.
Combining   \eqref{id:c_k} and the identity
\begin{equation} \label{id:Gamma}
 2^{2k} k! \Gamma\left(k + 1/2 \right) = \sqrt{\pi} (2k)! ,
\end{equation}
 we get
\begin{align*}
\mu_{k,\ell} &=  2\pi^{3/2}       \sqrt{  \frac{1}{ 2^{ 2k} k!\Gamma(k+3/2)   } \frac{1}{   2^{2\ell} \ell! \Gamma(\ell+3/2) } \frac{(2\ell)!}{  (\ell+k)!(\ell-k)!}} \\
&= 2\pi^{3/2}         \sqrt{  \frac{1}{ \sqrt{\pi} (k+1/2) (2k)!   } \frac{1}{  \sqrt{\pi} (\ell+1/2) (2\ell)!  } \frac{(2\ell)!}{  (\ell+k)!(\ell-k)!}} ,
\end{align*}
which yields  \eqref{id:mu}. This finishes the proof the lemma.
\end{proof}

The proof of \Cref{lemma:taylor_angular} uses the so-called ladder operators $L_+$ and $L_-$:  
\begin{equation} \label{id:ladder}
L_+ = L_1 +iL_2, \qquad L_- = L_1-iL_2,
\end{equation}  
|see for example \cite[Section 8.2]{Teschl}. The term ladder operators is motivated by the fact that these operators act as right/left shift operators in the index $m$:
\begin{equation} \label{id:ladder_coeff}
\begin{aligned}
L_+ Y_{\ell,m} &= \sqrt{(\ell -m)(\ell+m+1)} Y_{\ell ,m+1}, \\
L_- Y_{\ell,m} &= \sqrt{(\ell+m)(\ell-m+1)}Y_{\ell,m-1}.
\end{aligned}
\end{equation} 
$L_1$ and $L_2$ have the following expressions in terms of ladder operators
\begin{equation} \label{id:L_ladder}
L_1 = \frac{1}{2}(L_+ +L_-)\quad \text{and} \quad  L_2 = \frac{1}{2i}(L_+ - L_-).
\end{equation}
\begin{proof}[Proof of \Cref{lemma:taylor_angular}]   
Let $\zeta = e_1 + ie_2$. We start by noticing that $\zeta_1 = -\O_{h|\xi|}(\zeta)$, where $\O_{h|\xi|}$ belongs to $\SO(3)$ and, in the $\{e_1,e_2,e_3\}$ basis, is given by the  matrix  
\[
\begin{pmatrix}
1 & 0 &   0 \\
0 & \cos{\epsilon} &  -\sin{\epsilon}\\
 0 & \sin{\epsilon} &  \cos{\epsilon}\\
\end{pmatrix}
\] 
where $\sin \epsilon = h|\xi|$ for $h$ small.
Therefore
\begin{align*}
Y^\ell_{\zeta_1} (x)
&=(\zeta_1 \cdot x)^\ell 
= (-\O_{h|\xi|}(\zeta)\cdot x)^\ell \\
&=(-1)^\ell(\zeta \cdot \O_{h|\xi|}^{\T} (x))^\ell 
=   c_\ell^{1/2}  R_{\O_{h|\xi|}} (Y_{\ell,\ell} )(x),
\end{align*}
 where to get the last equality we have used   \eqref{id:sphe_in_standard}. We must be careful now, since we recall that $\langle \centerdot , \centerdot\rangle$ is not the $L^2$ product |there is not complex conjugation, see \eqref{id:dual}. Since we want to use the $L^2(\SS^2)$ structure of certain integrals, it is convenient to use \eqref{id:sh_conj}   to write the quantity that we want to compute as follows
\begin{multline} \label{id:artificial_conj}
\left \langle Y^\ell_{\zeta_1},  Y_{\ell,k}  \right \rangle  
= \left \langle   Y_{\ell,k}  , Y^\ell_{\zeta_1}\right \rangle  
=  (-1)^{ k} \left \langle   \ol{Y_{\ell,-k} } , Y^\ell_{\zeta_1} \right \rangle \\
=  (-1)^{k } c_\ell^{1/2} \left \langle   \ol{Y_{\ell,-k} } , R_{\O_{h|\xi|}} (Y_{\ell,\ell} ) \right \rangle .
\end{multline} 
Observe that the $\O_{h|\xi|}$ matrices fix $e_1$, and   the generator of rotations that fix the $e_1$ vector is exactly the angular momentum operator  $L_1$ defined in \eqref{id:angular_momentum_cartesian}.
 This means that the operator $R_{\O_{h|\xi|}}$ can be generated from $L_1$ through the exponential map, as shown in \eqref{id:exponential_map}. Thus, the Taylor's expansion of the exponential
\[
R_{\O_{h|\xi|}}(f) = e^{-i\epsilon L_{1}} f =  \sum_{p=0}^\infty \frac{(-i)^{p}}{p!} \epsilon^p   L_1^p f,
\]
gives
\begin{equation}   \label{id:taylor_yk_epsilon}
\left \langle   \ol{Y_{\ell,-k} } , R_{\O_{h|\xi|}} (Y_{\ell,\ell} ) \right \rangle
=   \sum_{p=0}^{\ell+k} \frac{(-i)^{p}}{p!}  \epsilon^p \left \langle  \ol{Y_{\ell,-k} }, L_{1}^{p}( Y_{\ell,\ell} ) \right \rangle + O(\epsilon^{\ell+k+1}).
\end{equation}
The operator $L_{1}$ is not diagonal on the basis of spherical harmonics used here, so we recall the Ladder operators introduced in \eqref{id:ladder} and  \eqref{id:L_ladder}.  We have
\[  
L_{1} = \frac{1}{2}(L_+ + L_-) ,
\]
which makes computing $\left \langle  \ol{Y_{\ell,-k} }, L_{1}^{p}( Y_{\ell,\ell} ) \right \rangle $ straightforward using \eqref{id:ladder_coeff}. \\
On the one hand, in the case $0\le p<\ell+k$ for appropriate coefficients $a_j$ we have that
\[ L_{1}^{p} Y_{\ell,\ell}  = \sum_{j = 0}^{p} a_j Y_{\ell,\ell-j} ,
\]
so that
\[   
\left \langle  \ol{Y_{\ell,-k} }, L_{1}^{p}( Y_{\ell,\ell} ) \right \rangle 
= 0\qquad \text{for all }  0\le p < \ell +k.
\]
On the other hand, when $p= \ell +k$, notice that
\begin{align*}
(L_-)^{\ell+k} \,  Y_{\ell,\ell}  
&=  \left(\prod_{m = -k+1}^{\ell} \sqrt{(\ell+m)(\ell-m+1)} \right) Y_{\ell,-k}  
\\ &=    \sqrt{\frac{(\ell+k)!(2\ell)!}{(\ell-k)!}} \; Y_{\ell,-k} .
\end{align*}
This means that there is exactly one term in the expansion of $ L_{1}^{p} Y_{\ell,\ell} $ which does not vanish after taking the $L^2(\SS^{2}) $ product with $Y_{\ell,-k} $. Hence
one gets
\begin{equation*}   
\left \langle  \ol{Y_{\ell,-k} }, L_{1}^{\ell +k}( Y_{\ell,\ell} ) \right \rangle 
= \frac{1}{2^{\ell+k}}    \sqrt{\frac{(\ell+k)!(2\ell)!}{(\ell-k)!}}
\end{equation*}
Substituting this in \eqref{id:taylor_yk_epsilon}, and using that $ \sin \epsilon =  h|\xi|$, and hence $\epsilon = h|\xi|+ O(h^2)$, we obtain that
\begin{equation*}   
\left \langle   \ol{Y_{\ell,-k} } , R_{\O_{h|\xi|}} (Y_{\ell,\ell} ) \right \rangle
=   (-i)^{\ell +k} \sqrt{\frac{(2\ell)!}{ (\ell+k)!(\ell-k)!}} \left( \frac{|\xi|}{2} \right )^{\ell+k}   h^{\ell+k}   + O(h^{\ell+k+1}) .
\end{equation*}
Using this in \eqref{id:artificial_conj} finishes the proof of the lemma.
\end{proof}

We now prove the main result in this section.

\begin{proof}[Proof of \Cref{prop:qexpA}] 
Recall that $\zeta_1,\zeta_2$ and $\widetilde{\zeta}_1,\widetilde{\zeta}_2$ are given, respectively, by \eqref{eq:zeta_h_nonradial} and \eqref{eq:zeta_h_nonradial_e3}.
Since the basis $\{\eta_1,\eta_2,\omega\}$ is positively oriented, there is one $P\in \SO(3)$ such that 
 \[
 \{\eta_1,\eta_2,\omega\} = \{P(e_1),P(e_2),P(e_3)\},
 \]
 so that $ {\zeta_1} = P(\widetilde{\zeta}_1)$ and $ {\zeta_2} = P(\widetilde{\zeta}_2)$.
Since $
R_Q (e_\zeta) = e_{Q(\zeta)} $ we have
\begin{align*}
   \widehat{ \langle \qe \rangle} (\xi )  &= \lim_{h\to 0^+}  \left  \langle e_{  \zeta_1/h},  \left(\langle \Lambda_q \rangle_\omega-\Lambda_0\right ) ( e_{   \zeta_2/h}) \right  \rangle \\
  &= \lim_{h\to 0^+}  \left  \langle e_{ \widetilde{\zeta}_1/h},  R_{P^\T} \left(\langle \Lambda_q \rangle_\omega-\Lambda_0\right ) R_{P}( e_{ \widetilde{\zeta}_2/h}) \right  \rangle.
  \end{align*}
  
Thus, we take $\A = R_{P^\T} \left(\langle \Lambda_q \rangle_\omega-\Lambda_0\right )R_{P}$.  Since  $P^\T(\omega) = e_3$, by \eqref{id:ADN_invariance_1} we have that  
\[
\A =  \left(\langle \Lambda_{R_{P^\T}(q)} \rangle_{e_3}-\Lambda_0\right ) ,
\]
and hence $[\A,L_3] =0$ by \eqref{id:ADN_invariance_2}. 
 Applying \Cref{lemma:A} we get
\begin{equation*} 
 \widehat{\langle {\qe} \rangle}(\xi)
=  \lim_{h \to 0^+}   { \left  \langle e_{  \widetilde{\zeta}_1/h}, \A \, e_{ \widetilde{\zeta}_2/h} \right  \rangle}  
  = \sum_{k=0}^\infty  \sum_{\ell=k}^\infty  (-i)^{\ell+k}  \mu_{k,\ell} |\xi|^{\ell+k}  \gamma_{k,\ell}    ,
\end{equation*}
where now, by \eqref{id:spherical_harmonic_omega} and \eqref{id:matrix_elements}, 
\begin{equation*} 
\begin{aligned} 
 \gamma_{k,\ell}  
 = \left \langle \ol{Y_{\ell,k}},\A  Y_{k,k} \right \rangle 
&=  \left \langle R_P \ol{Y_{\ell,k}},(\langle \Lambda_{ q}\rangle_{\omega} -\Lambda_0)  R_P Y_{k,k} \right \rangle  \\
  &=  \left \langle  \ol{Y_{\ell,k}^\omega},\langle \Lambda_{ q}\rangle_{\omega}    Y_{k,k}^\omega \right \rangle -\delta_{k,\ell} k.
 \end{aligned}
\end{equation*}
since $P(e_3) = \omega$. By \eqref{id:ADN_invariance_3}, this finishes the proof of the theorem.
\end{proof}


\section{The structure of D-N map} \label{sec:spectrum}

This section is devoted to studying the spectrum and matrix elements of the Dirichlet to Neumann map. We will make extensive use of  the following version of Alessandrini's identity \cite{Alessandrini88}: for every $f,g\in H^{1/2}(\SS^{d-1})$ the following holds:
 \begin{equation} \label{id:alessandrini_0}
 \br{f,(\Lambda_{q}-\Lambda_{0})g} = \int_{B} q(x) u(x)v(x)\,dx, 
 \end{equation}
 where $u$ solves \eqref{id:calderon_q} and $v$ solves
\begin{equation*} 
\left\{
\begin{array}{llr}
\Delta v  &= 0  & \text{in }   B,\\
v |_{  \SS^{d-1}} &= f. &    \\
\end{array}\right.
\end{equation*}

\subsection{The radial case} \label{sec:radial_series} In this section we prove \Cref{main_thm:radial_series}. Let  $d \ge 2$ and consider  
a radial potential $q(x) = q_0(|x|)$ such that $q \mathcal \in \cQ_d$. If $g\in H^{1/2}(\partial\Omega)$ the direct problem  \eqref{id:calderon_q}
\begin{equation}\label{e:pot}
\left\{
\begin{array}{rcr}
-\Delta u(x)+q_0(|x|)u(x)=&0&\text{ for } x\in B,\\
u|_{  \partial B}=&g,& \\
\end{array}\right.
\end{equation} 
and the Dirichlet to Neumann map
\[
\Lambda_q(f)=\partial_\nu u|_{\partial B}
\]
is well-defined. By \eqref{id:DN_invariance_int} this map commutes with all rotations in $\SO(d)$ and therefore $\Lambda_q|_{\H_k}=\lambda_k[q]\Id_{\H_k}$ for every $k\in\N_0$. If $\varphi_k\in\H_k$ and   $g = \varphi_k$ in \eqref{e:pot},  then  $u(x) = b_k(|x|)\varphi_k(x/|x|)$ where $b_k$ solves
\begin{equation} \label{id:radial_b}
    -\frac{1}{r^{d-1}}\frac{d}{dr}(r^{d-1} \frac{d}{dr} b_k(r))+ \left(\frac{k(k+d-2)}{r^2}+q_0(r)\right)b_k(r)=0,
\end{equation}
 subject to the boundary conditions
\[
\lim_{r\rightarrow 0^+}b_k(r)<\infty,\quad b_k(1)=1.
\]
 Note that there exist a unique solution $b_k$ with these properties, otherwise there would be more than one function $u$ satisfying \eqref{e:pot}.
Thus,
\[
\Lambda_q(\varphi_k)=\frac{d \phantom{r} }{dr}b_k (1)\varphi_k,
\]
which shows that $\lambda_k[q] = \frac{d \phantom{r}}{dr} b_k(1) $. From now on we will write $\lambda_k = \lambda_k[q] $.

The solid spherical harmonic associated  to $\varphi_k \in \H_k$ is   the harmonic function  $\widetilde{\varphi}_k(x) =  |x|^k \varphi_k(x/|x|)$ with $x\in B$. Thus,  \eqref{id:alessandrini_0}  yields that
\begin{multline} \label{id:aless_eigenvalue}
\lambda_k  -k = \langle \ol{\varphi_k} , (\Lambda_q -\Lambda_0) \varphi_k \rangle \\
=  \int_{B} |x|^k \ol{\varphi_k} \left(\frac{x}{|x|} \right) q_0(|x|) b_k(|x|)\, \varphi_k \left(\frac{x}{|x|} \right) \, dx = \int_0^1  q_0(r)  b_k(r) r^{k}  r^{d-1} \, dr,
\end{multline}
provided that $\norm{\varphi_k}_{L^2(\SS^{d-1})} =1$.
The change of variables
\begin{equation*} 
v_k(t)=e^{-\frac{d-2}{2}t} b_k (e^{-t}),\quad b_k(r)=r^{-\frac{d-2}{2}}v_k(-\log r),
\end{equation*}
transforms equation \eqref{id:radial_b} into its Liouville normal form:
\begin{equation}\label{e:1dred}
-\frac{d^2 \phantom{t}}{dt^2}v_k(t)+V(t)v_k(t)=-\left(k+\frac{d-2}{2}\right)^2v_k(t),\quad t\in\R_+,
\end{equation}
subject to the boundary conditions:
\begin{equation*} 
v_k(0)=1, \quad v_k\in L^2(\R_+),
\end{equation*}
and with
\begin{equation}\label{e:1dpot}
V(-\log r)=r^2q_0(r),\quad V(t)=e^{-2t}q_0(e^{-t}).
\end{equation}
Note again, that there exist a unique solution $v_k$ of \eqref{e:1dred} with these properties. Otherwise, this would imply non-uniqueness for $b_k$. Also, if $\alpha = \max \esupp q_0$, by \eqref{e:1dpot}  $V$ is supported in $[-\log \alpha,\infty) \subseteq \R_+$, and we have the pointwise bound 
\begin{equation}\label{id:Vqbound}
 |V(-\log r)|  = r^2|q_0(r)| \le \alpha^2 \norm{q_0}_{L^\infty((0,1])},\quad \text{a.e. }r\in [0,1] .
\end{equation}
 
To simplify notation, let $\kappa_d \ge 0$ be given by 
\begin{equation} \label{id:kappa}
\kappa_d = k + \frac{d-2}{2}.
\end{equation}
It will be convenient to restate the eigenvalue equation \eqref{e:1dred} as an equation with homogeneous   boundary conditions. To do this, take $u_{\kappa_d}$ such that  
\begin{equation} \label{id:uk}
v_k(t) = u_{\kappa_d}(t) + e^{-{\kappa_d} t} .
\end{equation}
Then \eqref{e:1dred} becomes
\begin{equation} \label{e:1dred_2}
-\frac{d^2 \phantom{t}}{dt^2}u_{\kappa_d} (t) + {\kappa_d}^2 u_k(t) +V(t)u_{\kappa_d} (t)=- V(t) e^{-{\kappa_d} t},\quad t\in \R_+,
\end{equation}
with conditions
\begin{equation*} 
u_{\kappa_d}(0)=0, \quad u_{\kappa_d}\in L^2(\R_+).
\end{equation*}

On the other hand, using the same change of variables \eqref{e:1dpot} in \eqref{id:aless_eigenvalue} gives
\begin{equation*} 
\lambda_k -k =     \int_0^\infty   e^{- k t} V(t) v_k(t) e^{-\frac{d-2}{2}t} \, dt ,
\end{equation*}
which  by \eqref{id:kappa} and \eqref{id:uk} becomes
 \begin{equation} \label{id:aless_eigenvalue_2}
\lambda_k -k =    \int_0^\infty  e^{- 2{\kappa_d} t} V(t)    \, dt + \int_0^\infty   e^{- {\kappa_d} t} V(t) u_{\kappa_d}(t)   \, dt
\end{equation}

Let $\kappa \ge 0$. We  introduce the free resolvent   operator  $\cR(\kappa) : L^2(\R_+) \to L^2(\R_+)$ given by 
\[
\cR(\kappa)f =  \left (-\frac{d^2\phantom{t}}{dt^2}+ \kappa^2 \right )^{-1} f .
\]
This operator can be computed explicitly by the formula
\begin{equation} \label{id:res_1d_0}
\cR(\kappa)f(t) = \int_{0}^\infty  G(t,s;\kappa) f(s) \, ds,
\end{equation}
where, for   $t,s \in \R_+$,
\begin{equation} \label{id:res_1d}
 G(t,s;\kappa) = \frac{\sinh \left (\kappa \min\{t,s\} \right )}{\kappa} e^{-\kappa\max\{t,s\}}= \frac{1}{2\kappa}\left( e^{-\kappa|t-s|} - e^{-\kappa(t+s)}\right) ,
\end{equation}
  is the Green function of the half-line $\R_+$.
 
Write $e_{\kappa}(t):=e^{-\kappa t}$; then \eqref{e:1dred_2} is equivalent to the following integral equation
\[
u_{\kappa_d}(t) + \cR({\kappa_d})(V u_{\kappa_d})(t)  = -\cR({\kappa_d})\left (V e_{\kappa_d} \right )(t)  .
\]
Multiplying by $V$ one gets
\[
(I + V \cR({\kappa_d}) ) (V u_{\kappa_d}) = -V \cR ({\kappa_d}) \left (V e_{\kappa_d}  \right )   ,
\]
which can be solved with a Neumann series:
\begin{equation} \label{id:neumann_series}
V u_{\kappa_d} = \sum_{n=2}^\infty (-1)^{n-1}(V\cR(\kappa_d))^{n-1} \left (V e_{\kappa_d}  \right )
\end{equation}
provided  $\norm{V\cR(\kappa_d)}_{L^2(X) \to L^2(X)} <1$, where $X= [-\log \alpha,\infty)$.
Assuming this holds, \eqref{id:aless_eigenvalue_2} becomes
\begin{equation} \label{id:series_radial}
\lambda_k =  k + \int_0^\infty  e^{- 2{\kappa_d} t} V(t)    \, dt + \sum_{n=2}^\infty \sigma_{k,n},
\end{equation}
where for $n \ge 2$ the numbers $\sigma_{k,n}$ satisfy
\begin{equation} \label{id:lambda_term}
\sigma_{k,n} =  (-1)^{n-1}\int_0^\infty   e_{ \kappa_d}(t) (V \cR(\kappa_d))^{n-1} \left (Ve_{\kappa_d} \right )(t)\, dt.
\end{equation}

To prove   \Cref{main_thm:radial_series} first notice that undoing the previous change of variables and using \eqref{id:kappa}, we have that
\begin{equation} \label{id:first_term}
\int_0^\infty  e^{- 2{\kappa_d} t} V(t)    \, dt = \int_0^1 q_0(r) r^{2k+d-1} \, dr = \frac{1}{|\SS^{d-1}|}\int_B q(x) |x|^{2k} \, dx, 
\end{equation}
which is exactly the first order term in \eqref{id:eigenvalue_series}. 
Then the proof of \Cref{main_thm:radial_series} follows  from the next lemma.
\begin{lemma} \label{lemma:resolvent_estimate}
Let $V$ be given by \eqref{e:1dpot} and $X =  [-\log \alpha,\infty) $. Then
\begin{equation*}
\norm{V\cR(\kappa)}_{L^2(X) \to L^2(X)}  
\le \frac{\alpha^2}{\kappa^2}\norm{q_0}_{L^\infty((0,1 ])},
\end{equation*}
for all $\kappa>0$.
\end{lemma}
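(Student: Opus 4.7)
The plan is to reduce the estimate to two essentially independent ingredients: a sharp pointwise bound on $V$ coming directly from \eqref{e:1dpot}, and the operator norm of the free resolvent $\cR(\kappa)$ on $L^2(\R_+)$. The support condition $\esupp V \subseteq X = [-\log\alpha,\infty)$ is what lets us transfer the bound to $L^2(X)$.

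First I would observe that $V\cR(\kappa)$ factors as the composition of the multiplication operator $M_V$ by $V$ and the resolvent $\cR(\kappa)$, viewed on $L^2(\R_+)$. From \eqref{e:1dpot} and \eqref{id:Vqbound}, on the interval $X$ one has $|V(t)| = e^{-2t}|q_0(e^{-t})| \le \alpha^2\norm{q_0}_{L^\infty([0,1])}$, since $e^{-t}\le\alpha$ there. Because $V$ is supported in $X$, the multiplication operator $M_V$ extends by zero to all of $L^2(\R_+)$ with the same bound, and it maps $L^2(X)$ into $L^2(X)$.

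For the resolvent, I would recall that $\cR(\kappa) = (-d^2/dt^2 + \kappa^2)^{-1}$ with Dirichlet boundary condition at $0$, and that the Dirichlet Laplacian on the half-line has spectrum $[0,\infty)$. By the spectral theorem one has
\[
\norm{\cR(\kappa)}_{L^2(\R_+)\to L^2(\R_+)} = \frac{1}{\kappa^2},
\]
so composing the two estimates yields the result. (If one prefers not to invoke spectral theory, the same bound follows from Schur's test applied to the kernel in \eqref{id:res_1d}, since a direct computation shows $\int_0^\infty G(t,s;\kappa)\,ds = (1-e^{-\kappa t})/\kappa^2 \le 1/\kappa^2$, and symmetrically in $s$.)

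The main obstacle is essentially cosmetic rather than substantive: one must be careful that restricting the operator to $L^2(X)$ does not lose anything, which is immediate from the fact that $V$ vanishes outside $X$ so that the range of $M_V$ is automatically contained in $L^2(X)$ even though $\cR(\kappa)$ spreads support. Everything else is just combining the two norm estimates.
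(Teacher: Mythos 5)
Your proposal is correct and arrives at the same bound, but by a slightly different route for the resolvent estimate. You first isolate the pointwise bound $|V(t)|\le \alpha^2\norm{q_0}_{L^\infty}$ on $X$, exactly as the paper does via \eqref{id:Vqbound}, and then bound the free resolvent by $\norm{\cR(\kappa)}_{L^2(\R_+)\to L^2(\R_+)}=1/\kappa^2$ either from the spectral theorem (the Dirichlet Laplacian on the half-line has spectrum $[0,\infty)$, so the resolvent at $-\kappa^2$ has norm exactly $1/\kappa^2$) or from Schur's test on the exact kernel, using $\int_0^\infty G(t,s;\kappa)\,ds = (1-e^{-\kappa t})/\kappa^2\le 1/\kappa^2$ together with the symmetry of $G$. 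The paper instead drops the negative piece of the kernel, bounds $|G(t,s;\kappa)|\le\frac{1}{2\kappa}e^{-\kappa|t-s|}$, extends $f$ by zero to all of $\R$, and applies Minkowski's integral inequality (equivalently Young's convolution inequality), which yields $\frac{1}{2\kappa}\cdot\frac{2}{\kappa}=\frac{1}{\kappa^2}$. All three mechanisms deliver the same constant; your spectral-theorem version is the most conceptual and shows the resolvent bound is actually an equality, while your Schur's-test alternative and the paper's Young-inequality argument stay elementary and self-contained at the kernel level. Your observation about the support transfer — that $V\cR(\kappa)$ maps into $L^2(X)$ because $V$ vanishes off $X$, even though $\cR(\kappa)$ does not preserve supports — is the right point to note and handles the $L^2(X)\to L^2(X)$ framing correctly.
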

\begin{proof}
Since $e^{-\kappa|t-s|} \ge e^{-\kappa(t+s)}$ for all $t,s  \in \R_+$, by \eqref{id:res_1d} it holds that
\[
  | G(t,s;\kappa) | \le \frac{1}{2\kappa}e^{-\kappa|t-s|}.
\]
 Take $f \in L^2(X)$. Then by the previous estimate, \eqref{id:Vqbound} and \eqref{id:res_1d_0}, 
\[  
\left|V(t)\cR(\kappa)(f) (t)\right | 
\le  \alpha^2 \norm{q_0}_{L^\infty([0,1))}  \frac{1}{2\kappa} \int_0^\infty  e^{-\kappa|t-s|} |f(s)| \, ds.
\]
Thus, if we extend $f$  as $0$ outside its support we can extend the integrals to $\R$ and apply Minkowski inequality as follows
\begin{align*}
\norm{V\cR(\kappa)(f)}_{L^2(X)}^2 
&\le \frac{\alpha^4}{4\kappa^2}  \norm{q_0}_{L^\infty([0,1))}^2 \int_{-\infty}^\infty \left( \int_{-\infty}^\infty e^{-\kappa|t-s|} |f(s)| \, ds \right )^2 dt \\
&= \frac{\alpha^4}{4\kappa^2}  \norm{q_0}_{L^\infty([0,1))}^2 \int_{-\infty}^\infty \left( \int_{-\infty}^\infty e^{-\kappa|s|} |f(t-s)| \, ds \right )^2 dt \\
&\le \frac{\alpha^4}{4\kappa^2} \norm{q_0}_{L^\infty([0,1))}^2 \norm{f}_{L^2(X)}^2 \left( \int_{-\infty}^\infty e^{-\kappa|t|}  \, dt  \right)^2
\end{align*}
This finishes the proof since $\int_{-\infty}^\infty e^{-\kappa|t|}  \, dt   = \frac{2}{\kappa}$
\end{proof}

\begin{proof}[Proof of $\Cref{main_thm:radial_series}$]
As we will now show, the $\sigma_{k,n}[q]$ numbers that appear in  the statement are the $\sigma_{k,n}[q] = \sigma_{k,n} $ numbers given by \eqref{id:lambda_term} with $V (t)=e^{-2t}q_0(e^{-t})$ and $q(x) = q_0(|x|)$.
In fact, \Cref{lemma:resolvent_estimate} implies that $\norm{V\cR(\kappa)}_{L^2(X) \to L^2(X)}  <1$ when we have the condition $\kappa>\alpha \norm{q_0}_{L^\infty((0,1])}^{1/2}$. Thus \eqref{id:neumann_series} holds when  $k\ge 0$ and
\[
k > \alpha \norm{q_0}_{L^\infty((0,1])}^{1/2} - \frac{d-2}{2}.
\]
This together with \eqref{id:first_term} proves \eqref{id:eigenvalue_series}.

On the other hand, \eqref{id:lambda_term} and Cauchy-Schwarz inequality imply that
\begin{equation*} 
|\sigma_{k,n}| \le \norm{ e_{\kappa_d }}_{L^2(X)} \norm{V \cR(\kappa_d)}_{L^2(X) \to L^2(X)}^{n-1} \norm{ V e_{ \kappa_d }}_{L^2(X)}. 
\end{equation*}
Using \Cref{lemma:resolvent_estimate}, \eqref{id:Vqbound}  and that $\norm{ e_{\kappa_d }}_{L^2(X)} = (2\kappa_d)^{-1/2}\alpha^k$ gives
\begin{equation*} 
|\sigma_{k,n}| \le  \frac{\alpha^{2(\kappa_d +n)}}{2\kappa_d^{2n-1}}  \norm{q_0}_{L^\infty((0,1])}^n = \frac{\alpha^{2(k +n) +d-2}}{2 \left( k + \frac{d-2}{2}\right)^{2n-1}}  \norm{q_0}_{L^\infty((0,1])}^n.
\end{equation*}
This finishes the proof of the theorem.
\end{proof}

 We can now invert the change of variables to obtain an explicit formula for $\sigma_{k,n}[q]$ in terms of $q(x) = q_0(|x|)$. If $n\ge 2$ and $k+ (d-2)/2 >0$ we have that
\begin{multline} \label{id:series_terms}
\sigma_{k,n}[q] = (-1)^{n-1} \int_{0}^1 \dots \int_{0}^1 \, q_0(r_1) r_1^{k+d/2} q_0(r_2) r_2 \times  \dots \\ 
\dots \times q_0(r_{n-1}) r_{n-1} q_0(r_n) r_n^{k+d/2} F(r_1,r_2,k) \dots F(r_{n-1},r_{n},k)  \,  dr_1  \dots dr_n,
\end{multline}
where we have that, for $\beta,r,s>0$,
\[
F(r,s,\beta-(d-2)/2) = \frac{\min(r,s)^{\beta}}{\max(r,s)^{\beta}} - r^\beta s^\beta .
\]


\subsection{The general 3-dimensional case} \label{sec:non_radial_series}

In this section we will assume that $d=3$. Our goal is to understand the asymptotic behavior of the matrix elements of the operator $\Lambda_q-\Lambda_0$  that appear in the formula \eqref{id:qexpA_int}.

Let $u_{k,m}$ be the solution of
\begin{equation}\label{id:pot_2}
\left\{
\begin{array}{rlr}
-\Delta u_{k,m} + qu_{k,m} =& 0   &\text{ in }   B,\\
u_{k,m}|_{  \partial B} =& Y_{k,m}, &\phantom{,}     \\
\end{array}\right.
\end{equation} 
where $q \in \cQ_3$, and $Y_{k,m}$ is a spherical harmonic with north pole $e_3$, as defined in \eqref{id:quantum_def_spherical_harmonic}.  Equation \eqref{id:pot_2} has a unique solution $u_{k,m}$, and the Dirichlet to Neumann map $\Lambda_{q} Y_{k,m} = \partial_{\nu} u_{k,m}$
is well-defined.
The solid spherical harmonic associated  to $Y_{k,m}$ is the function 
\begin{equation} \label{id:solid_harmonic}
\widetilde{Y}_{k,m}(x) =  |x|^k Y_{k,m} (x/|x|) \qquad x\in B. 
\end{equation}  
Let $k,\ell \in \N_0$ and $m,n \in \N_0$ such that $-k \le m \le k$ and $-\ell \le n \le \ell$.  The Allessandrini identity  \eqref{id:alessandrini_0} yields that
\begin{equation} \label{id:aless_matrix_2}
 \langle \ol{Y_{\ell,n}} , (\Lambda_q -\Lambda_0)  Y_{k,m} \rangle \\
=  \int_{B} |x|^\ell \ol{Y_{\ell,n}}(x/|x|) q(x) u_{k,m}(x) \, dx .
\end{equation}
As in radial case, we introduce a function $v_{k,m}$ such that  
\begin{equation} \label{id:u_Y_v}
u_{k,m} = \widetilde{Y}_{k,m} + v_{k,m} .
\end{equation}
 Then    \eqref{id:pot_2} becomes
\begin{equation} \label{id:poisson_2}
\left\{
\begin{array}{rlr}
-\Delta v_{k,m}  + q v_{k,m}  =& -q\widetilde{Y}_{k,m}  &\text{ in }   B,\\
v_{k,m}|_{  \partial B} =& 0,\phantom{\widetilde{Y}_{k,m}}  &\phantom{,}     \\
\end{array}\right.
\end{equation} 
 We now introduce the resolvent operator $ \cR_0: L^2(B) \to H^1_0(B)$ given by  the solution operator of the Poisson problem in the ball. If $d \ge 3$ this operator is given by
\[ \cR_0(f)(x) = \int_{B} G(x,y) f(y) \, dy, \]
 where 
\[ 
G(x,y) =   \mathfrak{c}_d \left(\frac{1}{|x-y|}- \frac{|x|}{|x - y|x|^2 |} \right).
\]
Applying the resolvent $\cR_0$ in \eqref{id:poisson_2}, we obtain the integral equation
\begin{equation} \label{id:lip_sch}
v_{k,m}    =  \cR_0(q\widetilde{Y}_{k,m}) + \cR_0(q v_{k,m})  = \cR_0(qu_{k,m}). 
\end{equation}
Inserting this and \eqref{id:u_Y_v}   in \eqref{id:aless_matrix_2} gives
\begin{multline} \label{id:aless_matrix_3}
 \langle \ol{Y_{\ell,n}} , (\Lambda_q -\Lambda_0)  Y_{k,m} \rangle
=  \int_{B} |x|^{\ell + k} q(x)   \ol{Y_{\ell,n}}\left(\frac{x}{|x|}\right)  Y_{k,m}\left(\frac{x}{|x|}\right)   \, dx  \\ + \int_{B} |x|^{\ell  } \,  \ol{Y_{\ell,n}}\left(\frac{x}{|x|}\right) q(x) \cR_0(qu_{k,m}) (x) \, dx      .
\end{multline}
In general, if $\alpha:=\max_{x\in \esupp q} |x|$, the first term in \eqref{id:aless_matrix_3} satisfies the bound 
\begin{align}
\nonumber \left|\int_{B} |x|^{\ell + k} q(x)   \ol{Y_{\ell,n}}\left(\frac{x}{|x|}\right)  Y_{k,m}\left(\frac{x}{|x|}\right)   \, dx    \right| 
&\le   \int_0^\alpha r^{\ell + k +2} \left| \int_{\SS^2} q(r\theta)   \ol{Y_{\ell,n}}(\theta) Y_{k,m}(\theta)   \, dS(\theta) \right|   dr   \\
 \label{est:size_m} &\le   \norm{q} _{L^\infty(B)}  \int_0^\alpha r^{\ell + k +2} \, dr =  \norm{q} _{L^\infty(B)}  \frac{\alpha^{k+\ell +3}}{k+\ell+3}.
\end{align}
Notice that the resolvent operator is independent of $k$, in contrast with the radial case. This  prevents us from using the Neumann series approach  unless one imposes smallness assumptions on the potential.  Nonetheless, since we are mostly interested to know if the first term in the right hand side of \eqref{id:aless_matrix_3} is the dominant one as $k,\ell \to \infty$, we can still use the regularization  effect provided by the resolvent operator to obtain extra decay for the second term in  \eqref{id:aless_matrix_3}.

 \begin{proposition} \label{prop:first_order_nonradial}
Let  $q\in \cQ_3$, and let $\alpha:=\max_{x\in \esupp q} |x|$. 
Assume $k,\ell\in \N$ with $\ell \ge k$ and let $\lambda_{k,\ell;\omega}[q]$   and $\m_{k,\ell;\omega}[q] $, respectively, as in   \eqref{id:matrix_elements} and \eqref{id:moments}. Then, for all $\omega \in \SS^2$ we have that
\begin{equation} \label{est:m_decay}
|\m_{k,\ell;\omega}[q] | \lesssim 
\frac{\alpha^{k+\ell}}{k+\ell +1}\norm{q}_{L^\infty(\R^3)} ,
\end{equation}
and
\begin{multline}  \label{est:asymp_k}
\left| \lambda_{k,\ell;\omega}[q] - k \delta_{k,\ell} - \m_{k,\ell;\omega}[q] \right|   \\     
\lesssim   \frac{\alpha^{k+\ell}}{(\ell +1)\sqrt{k+1}}   \norm{ q}_{L^\infty (\R^3)}^2 \left (1 +  \norm{ q}_{L^\infty (\R^3)}\norm{\mathcal (-\Delta  + q)^{-1}}_{L^2(B) \to L^2(B)} \right ),   
\end{multline}
where $\delta_{k,\ell}$ stands for the Kronecker delta. 
The implicit constants are independent of $k,\ell$, $\alpha$ and $q$. 
\end{proposition}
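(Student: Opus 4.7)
The plan is to combine Alessandrini's identity with the integral equation \eqref{id:lip_sch}, decompose $u_{k,k}$ into its harmonic part plus a correction, and then apply the self-adjointness of $\cR_0$ together with a duality argument that exploits Sobolev embedding. For \eqref{est:m_decay} the estimate is immediate: Cauchy--Schwarz on the sphere and the $L^2$-normalization of $Y_{\ell,k}^\omega$ and $Y_{k,k}^\omega$ give $\int_{\SS^2}|\ol{Y_{\ell,k}^\omega}\,Y_{k,k}^\omega|\,dS\le 1$, while integrating the radial weight $|x|^{k+\ell+2}$ over $B_\alpha$ produces $\alpha^{k+\ell+3}/(k+\ell+3)$, consistent with the claim (the $\alpha^3$ factor being absorbed since $\alpha\le 1$).

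For \eqref{est:asymp_k}, I apply Alessandrini's identity \eqref{id:alessandrini_0} with $f=\ol{Y_{\ell,k}^\omega}$ and $g=Y_{k,k}^\omega$; orthonormality of spherical harmonics and $\Lambda_0 Y_{k,k}^\omega=kY_{k,k}^\omega$ show the left-hand side is $\lambda_{k,\ell;\omega}[q]-k\delta_{k,\ell}$, and substituting the decomposition $u_{k,k}=\widetilde{Y_{k,k}^\omega}+v_{k,k}$ from \eqref{id:u_Y_v} extracts the moment $\m_{k,\ell;\omega}[q]$ from the harmonic part, leaving the remainder
\[
R \;:=\; \lambda_{k,\ell;\omega}[q]-k\delta_{k,\ell}-\m_{k,\ell;\omega}[q] \;=\; \int_B q\, v_{k,k}\,\widetilde{\ol{Y_{\ell,k}^\omega}}\,dx.
\]
Using $v_{k,k}=\cR_0(qu_{k,k})$ and the self-adjointness of $\cR_0$ on $L^2(B)$ (symmetric Green's function), I rewrite $R=\int_B \cR_0(q\widetilde{\ol{Y_{\ell,k}^\omega}})\,q u_{k,k}\,dx$ and apply Cauchy--Schwarz. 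The factor $\|qu_{k,k}\|_{L^2(B)}$ is then controlled by the same decomposition together with the a priori bound $\|v_{k,k}\|_{L^2}\le \|(-\Delta+q)^{-1}\|_{L^2\to L^2}\|q\widetilde{Y_{k,k}^\omega}\|_{L^2}$, which yields $\|qu_{k,k}\|_{L^2}\lesssim \|q\|_\infty \alpha^{k+3/2}(2k+3)^{-1/2}(1+\|q\|_\infty\|(-\Delta+q)^{-1}\|_{L^2\to L^2})$; this accounts for the $1/\sqrt{k+1}$ factor, one of the two copies of $\|q\|_\infty$, and the parenthetical factor in \eqref{est:asymp_k}.

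The main obstacle is then to obtain the $1/(\ell+1)$ decay in $\|\cR_0(q\widetilde{\ol{Y_{\ell,k}^\omega}})\|_{L^2(B)}$: a direct Cauchy--Schwarz using $\|\cR_0\|_{L^2\to L^2}\lesssim 1$ only yields $1/\sqrt{\ell+1}$, which is insufficient. My plan is to argue by duality: for $g\in L^2(B)$ with $\|g\|_{L^2}\le 1$, pass the resolvent across the pairing to obtain $\int_B q\,\widetilde{\ol{Y_{\ell,k}^\omega}}\,\cR_0 g\,dx$. Elliptic regularity gives $\cR_0 g\in H^2(B)\cap H^1_0(B)$ with norm $\lesssim 1$, and the Sobolev embedding $H^2(B)\hookrightarrow L^\infty(B)$ (which requires precisely $d<4$ and so is available in three dimensions) yields a uniform $L^\infty$ bound on $\cR_0 g$. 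H\"older's inequality then controls the pairing by $\|q\|_\infty \|\cR_0 g\|_{L^\infty}\|\widetilde{\ol{Y_{\ell,k}^\omega}}\|_{L^1(B_\alpha)}$, and the straightforward bound $\|\widetilde{\ol{Y_{\ell,k}^\omega}}\|_{L^1(B_\alpha)}\lesssim \alpha^{\ell+3}/(\ell+3)$, obtained from the uniform angular estimate $\|Y_{\ell,k}^\omega\|_{L^1(\SS^2)}\le\sqrt{4\pi}$ (Cauchy--Schwarz on $\SS^2$), delivers the required $1/(\ell+1)$ decay. The conceptual content of the proof is precisely this trade: moving $L^2$ to $L^\infty$ on the resolvent side and $L^2$ to $L^1$ on the solid harmonic side gains a full power of $\ell$, and this trade-off is specific to dimension three.
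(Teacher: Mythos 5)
Your proposal is correct and rests on the same two key ingredients as the paper: the $L^1$ bound on the solid harmonic $\widetilde{Y_{\ell,k}^\omega}$ over $B_\alpha$ (giving the $1/(\ell+1)$ factor) and the dimension-specific Sobolev embedding $H^2(B)\hookrightarrow L^\infty(B)$ applied to the resolvent, together with the $L^2$ bound on $u_{k,k}$ via $\cR_q$ (giving the $1/\sqrt{k+1}$ factor and the parenthetical term). The only difference is cosmetic: you transpose $\cR_0$ by self-adjointness and then do Cauchy--Schwarz followed by an $L^2$ duality argument, whereas the paper keeps $\cR_0$ on the $qu_{k,k}$ factor and applies the $L^1$--$L^\infty$ H\"older inequality directly to the remainder integral $\int_B q\,\widetilde{\ol{Y_{\ell,k}^\omega}}\,\cR_0(qu_{k,k})\,dx$; both routes produce identical factors, so yours is simply a slightly longer path to the same estimate.
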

As mentioned in the introduction, \eqref{est:m_decay} is sharp for $k=\ell$, as shown by taking the indicator function of the ball as $q$, so $\eqref{est:asymp_k}$ gives in an improvement in decay of order $k^{-1/2}$.
\begin{proof}
We will assume that $\omega = e_3$. Otherwise, one can take a rotation $P\in \SO(3)$ such that $P(e_3) = \omega$, and use that
\begin{equation} \label{id:rot_invariance_m_lambda}
\lambda_{k,\ell;\omega}[q] = \lambda_{k,\ell;e_3}[R_{P^\T}(q)] \quad \text{and} \quad \m_{k,\ell;\omega}[q] = \m_{k,\ell;e_3}[R_{P^\T}(q)],
\end{equation}
to reduce to the previous case.

First, notice that \eqref{est:m_decay} is the same as estimate \eqref{est:size_m} for $m=n=k$, so we have just to prove \eqref{est:asymp_k}.
By \eqref{id:matrix_elements}, we have that
\[
\left \langle \ol{Y_{\ell,k} }, (\Lambda_q-\Lambda_0) Y_{k,k}  \right \rangle  
  = \lambda_{\ell,k;\omega} - k \delta_{k,\ell}.
\] 
Therefore we need to estimate the second term in \eqref{id:aless_matrix_3} in the special case $m = n=k$:
\[
  \lambda_{k,\ell;\omega}[q] - k \delta_{k,\ell} - \m_{k,\ell;\omega}[q]   =  \int_{B_\alpha } |x|^{\ell  } \,  \ol{Y_{\ell,k} } \left(\frac{x}{|x|}\right) q(x) \cR_0(qu_{k,k}) (x) \, dx  =I,
\]
where  $u_{k,k}$ solves \eqref{id:pot_2} with  $m=k$ (recall $B_\alpha$ stands for of radius $\alpha$ centered at the origin where $q$ is supported). We denote the integral term on the right by $I$.
 Hence
\begin{align*}
|I|        &\le  \norm{ q \cR_0(qu_{k,k}))}_{L^\infty (B)} \int_{B_\alpha} |x|^\ell  \widetilde{Y}_{\ell,k} (x) \, dx 
       \\
&\le  \frac{\alpha^\ell}{\ell +1}  \norm{ q}_{L^\infty (\R^3)}    \norm{ \cR_0(qu_{k,k}))}_{L^\infty (B)} .
\end{align*}
We now use that in $\R^3$
\begin{equation} \label{e:resolvent_H-1}
 \norm{ \cR_0(f)}_{L^\infty (B)} \lesssim  \norm{\cR_0(f)}_{H^2(B)}  \lesssim  \norm{f}_{L^2(B)} \qquad f\in L^2(B).
\end{equation}
This yields
\begin{equation*}
|I|        \lesssim  \frac{\alpha^\ell}{\ell +1} \norm{ q}_{L^\infty (\R^3)}^2  \norm{  u_{k,k}}_{L^2 (B)}   .
\end{equation*}
To finish, we use that $u_{k,k} = v_{k,k} + \widetilde{Y}_{k,k}$ together with the identity   
\[
v_k = -\cR_q(q\widetilde{Y}_{k,k}),\qquad \cR_q:=(-\Delta  + q)^{-1},
\]
which follows from \eqref{id:poisson_2}. Note, that, since $0$ is not a Dirichlet eigenvalue  of $(-\Delta  + q)$ (recall \eqref{id:Qdef}), $\cR_q$ is well defined and is bounded in $L^2(B)$.
Using this  we get 
\begin{equation*}
|I|        \lesssim  \frac{\alpha^\ell}{\ell +1} \norm{ q}_{L^\infty (\R^3)}^2 \left (1 +  \norm{ q}_{L^\infty (\R^3)}\norm{\mathcal R_q}_{L^2(B) \to L^2(B)} \right )  \norm{\widetilde{Y}_{k,k} }_{L^2(B_\alpha)}    .
\end{equation*}
 Since  for all $k \in \N_0$ and $m \in \Z$ with $|m|\le k$, direct integration of the solid harmonic yields
\begin{equation*}  
\norm{\widetilde{Y}_{k,m} }_{L^2(B_\alpha)}^2  = \frac{\alpha^{2k}}{2k+2},
\end{equation*}
one concludes that
\begin{equation*} 
\left| I \right|       
\lesssim   \frac{\alpha^{k+\ell}}{(\ell +1)\sqrt{k+1}}   \norm{ q}_{L^\infty (\R^3)}^2 \left (1 +  \norm{ q}_{L^\infty (\R^3)}\norm{\mathcal R_q}_{L^2(B) \to L^2(B)} \right ) ,  
\end{equation*} 
which proves \eqref{est:asymp_k}.
\end{proof}


\section{Proof of the Fourier transform formula}  \label{sec:fourierTF}

On this section we prove representation \eqref{id:Fourier_representation_int}  for the Fourier transform.  This finishes the proof of \Cref{main_thm:qexpA}, since \eqref{id:qexpA_int} has been proved by \Cref{prop:qexpA} and \eqref{est:asymp_k_int}  by \Cref{prop:first_order_nonradial}. Let us state again 
the result given by \eqref{id:Fourier_representation_int}.
\begin{proposition}  \label{prop:general_fourier}
Take $\xi \in \R^3\setminus\{0\}$ and $\omega = \xi/|\xi|$. Assume that $q\in L^1(\R^3)$ has compact support. Then  
\begin{equation} \label{id:Fourier_representation}
\widehat{q} (\xi)   =   
  \sum_{k=0}^\infty  \sum_{\ell=k}^\infty     (-i)^{\ell+k} \frac{4\pi  }{\sqrt{(2k+1)(2\ell+1)}} \frac{  1}{ \sqrt{ (2k)!(\ell-k)! (k+\ell)!}}  |\xi|^{\ell+k} \,   \m_{k,\ell;\omega}[q] ,
\end{equation}
where the series converges absolutely.
\end{proposition}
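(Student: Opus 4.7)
The plan is to adapt the argument of \Cref{prop:qexpA}, with the Dirichlet-to-Neumann operator replaced by its linearization at $q=0$. The key simplification is that when $\zeta\in\mathcal{V}(3)$ the exponential $e_{\zeta/h}$ is itself harmonic in $B$, so it coincides with the harmonic extension of its own boundary values, and a direct calculation gives
\[
\int_{\R^3} q(x)\, e_{\zeta_1/h}(x)\, e_{\zeta_2/h}(x)\,dx \;=\; \int_{\R^3} q(x)\, e^{(\zeta_1+\zeta_2)\cdot x/h}\,dx.
\]
Choosing $\zeta_1,\zeta_2$ as in \eqref{eq:zeta_h_nonradial}, so that $\zeta_1+\zeta_2=-ih\xi+ir_h$ with $|r_h|=o(h)$, the right-hand side tends to $\widehat{q}(\xi)$ as $h\to 0^+$, because $q$ has compact support. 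Thus the scheme of \Cref{sec:main_Born_section_gen} can be run with this explicit integral replacing the Dirichlet-to-Neumann pairing.

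Next I would reduce to $\omega=e_3$ and $\zeta_j=\widetilde{\zeta}_j$ exactly as at the end of the proof of \Cref{prop:qexpA}, and insert an $\SO(3)_{e_3}$-average before taking the limit. Since every $Q_t\in\SO(3)_{e_3}$ fixes $\xi$, this averaging does not change the limit but replaces $q$ by its $\SO(3)_{e_3}$-average $\ol q$, which in spherical coordinates with north pole $e_3$ is independent of the azimuthal angle $\phi$.

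I then expand both exponentials in Taylor series and use the identifications from the proof of \Cref{lemma:taylor_angular}, extended from the sphere to all of $\R^3$ by homogeneity, namely $(\widetilde{\zeta}_2\cdot x)^k=(-1)^k c_k^{1/2}|x|^k Y_{k,k}(x/|x|)$ and $(\widetilde{\zeta}_1\cdot x)^\ell=c_\ell^{1/2}|x|^\ell R_{\O_{h|\xi|}}Y_{\ell,\ell}(x/|x|)$. Integrating against $\ol q$, the $\phi$-orthogonality isolates the $m=-k$ term in the spherical-harmonic expansion of $R_{\O_{h|\xi|}}Y_{\ell,\ell}=e^{-i\epsilon L_1}Y_{\ell,\ell}$, producing precisely $(-1)^k\m_{k,\ell;e_3}[q]$ (since $\m_{k,\ell;e_3}$ is $\SO(3)_{e_3}$-invariant). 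The ladder-operator computation in \Cref{lemma:taylor_angular} then shows that the first non-vanishing contribution to the $Y_{\ell,-k}$-coefficient of $e^{-i\epsilon L_1}Y_{\ell,\ell}$ appears at order $p=\ell+k$ with prefactor $2^{-\ell-k}\sqrt{(2\ell)!(\ell+k)!/(\ell-k)!}$, while higher orders give $O(h)$ corrections after the division by $h^{k+\ell}$ and vanish in the limit. Assembling the numerical factors exactly as in \Cref{lemma:A} reproduces $\mu_{k,\ell}$ of \eqref{id:mu} and yields \eqref{id:Fourier_representation}.

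The main obstacle will be justifying the interchange of the $h\to 0^+$ limit with the double summation, together with the absolute convergence claim. I would combine the crude moment bound $|\m_{k,\ell;\omega}[q]|\le \alpha^{k+\ell}\|q\|_{L^1}\|Y_{\ell,k}\|_\infty\|Y_{k,k}\|_\infty$ (available since $q\in L^1$ has compact support in a ball of radius $\alpha$) with the factorial decay $\mu_{k,\ell}\lesssim ((2k)!(\ell-k)!(k+\ell)!)^{-1/2}$, which dominates the at-most polynomial growth in $k,\ell$ of the $L^\infty$ norms of the spherical harmonics and yields uniform-in-$h$ tail bounds for the double series.
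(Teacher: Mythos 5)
Your proposal takes a genuinely different route from the paper's. The paper proves the identity by reducing it (after interchanging $\sum$ and $\int$, which is justified by the absolute-convergence estimate \eqref{est:m_decay}) to a pointwise kernel identity $a(x,\xi)=e^{-i\xi\cdot x}$, which it then verifies via the classical plane-wave expansion in spherical Bessel functions together with \Cref{lemma:sphe_harm_products} (products of spherical harmonics expressed through Clebsch--Gordan coefficients) and a lengthy combinatorial rearrangement of the triple sum, ending with $P_n(1)=1$. You instead start from the observation that the linearized scattering transform of the multiplication operator $q$ \emph{is} the Fourier transform, $\int_{\R^3} q\, e_{\zeta_1/h}e_{\zeta_2/h}\,dx=\widehat q\!\left(\xi-r_h/h\right)\to\widehat q(\xi)$, then re-run the Section 4 machinery (solid-harmonics factorization \eqref{id:sphe_in_standard}, the Wigner-type expansion of $R_{\O_{h|\xi|}}Y_{\ell,\ell}$ via ladder operators from \Cref{lemma:taylor_angular}, and $\phi$-orthogonality to isolate the $m=-k$ channel). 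This reproduces $\mu_{k,\ell}$ from \eqref{id:mu} directly and circumvents all the Clebsch--Gordan/Racah combinatorics of \Cref{sec:appendix_cl_coeff}. The trade-off is in where the analytic work lands: the paper's proof, once reduced to the kernel identity, is a completely elementary (if tedious) series manipulation with no limit to justify, whereas your argument requires passing $h\to0^+$ through a double sum whose terms depend on $h$.

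That interchange is the one place where I would push you to be more careful. Your proposed uniform-in-$h$ bound, namely the moment estimate times factorial decay of $\mu_{k,\ell}$, controls the \emph{limiting} series but not the terms at positive $h$: you must bound $|\langle \overline{Y_{\ell,-k}}, R_{\O_{h|\xi|}} Y_{\ell,\ell}\rangle|/h^{\ell+k}$ uniformly for $h\in(0,h_0]$, and the statement that "higher orders give $O(h)$ corrections" is a per-term asymptotic rather than a uniform bound. The cleanest fix is to use the exact Wigner-$d$ matrix element, $\langle Y_{\ell,m},R_{\O_\beta}Y_{\ell,\ell}\rangle\propto\binom{2\ell}{\ell-m}^{1/2}(\cos\tfrac{\beta}{2})^{\ell+m}(\sin\tfrac{\beta}{2})^{\ell-m}$ with $\beta=\arcsin(h|\xi|)$, which gives $|a_{-k}(h)|\le \binom{2\ell}{\ell+k}^{1/2}(h|\xi|/2)^{\ell+k}$ uniformly and hence a summable domination. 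With that supplied, your argument is correct and, in my view, conceptually lighter than the paper's. Two smaller points worth noting: your averaging step is genuinely needed at positive $h$ because $\widetilde\zeta_1+\widetilde\zeta_2$ has an $O(h^2)$ component orthogonal to $e_3$, so the integral is not $\SO(3)_{e_3}$-invariant before the limit; and both your statement and the paper's notation have a slight support mismatch (the moments \eqref{id:moments} are written as integrals over $B$ while $q$ is only assumed compactly supported in $\R^3$), so you should read $\m_{k,\ell;\omega}[q]$ with the integral taken over $\supp q$.
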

Recall that   $\m_{k,\ell;\omega}[q]$ was defined in \eqref{id:moments}.
\begin{proof}[Proof of   \Cref{main_thm:qexpA}]
It follows directly putting together  \Cref{prop:qexpA,prop:first_order_nonradial,prop:general_fourier}.
\end{proof}

 The key to  prove \Cref{prop:general_fourier}   is to express products of spherical harmonics as linear combinations of terms with one spherical harmonic. This is provided by the following lemma.
\begin{lemma} \label{lemma:sphe_harm_products}
Let $k,\ell\in \N_0$ and consider the set $A(k,\ell) \subseteq \N_0$ of numbers $n$ with $|\ell-k|\le n \le \ell+k$ such that $\ell+k + n = 2g$ for some $g \in \N_0$. Then it holds that
\begin{equation*}
\ol{Y_{\ell,k}}(x) Y_{k,k}(x)  = \sum_{n \in A(k,\ell)} (-1)^{g +k -n}   \frac{ (2n+1)}{4\pi} G(k,\ell,n) P_{n}(x\cdot e_3) 
\end{equation*}
where $P_n$ stands for the Legendre polynomial of degree $n$, 
\begin{equation*}
G(k,\ell,n)=  F(g) \frac{(-k +\ell +n)!}{(k +\ell +n +1)!}    \sqrt{ (2k+1)(2\ell+1)}  \sqrt{ \frac{(k + \ell)!(2k)! }{  (\ell -k )!}},
\end{equation*}
and
\begin{equation} \label{id:fg}
F(g) = \frac{g!}{(g-k )! (g-\ell )! (g-n )!} .
\end{equation}
\end{lemma}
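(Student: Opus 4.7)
The product $\overline{Y_{\ell,k}} Y_{k,k}$ carries a total magnetic quantum number $-k+k=0$, so it depends only on the polar angle $\theta$ in the spherical parametrization \eqref{id:spherical_coord}. Explicitly, using \eqref{id:quantum_def_spherical_harmonic} together with $P_k^k(\cos\theta)=\frac{(2k)!}{2^k k!}\sin^k\theta$, the plan is to obtain
\begin{equation*}
\overline{Y_{\ell,k}}(x)\,Y_{k,k}(x)
= \frac{1}{4\pi}\sqrt{(2k+1)(2\ell+1)}\,\sqrt{\frac{(2k)!(\ell-k)!}{(\ell+k)!}}\,\frac{1}{2^k k!}\,(1-t^2)^k P_\ell^{(k)}(t),
\end{equation*}
where $t=x\cdot e_3$ and $P_\ell^{(k)}$ denotes the $k$-th derivative of the Legendre polynomial. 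The two factors $e^{\pm i k\phi}$ cancel, accounting for the disappearance of the $\phi$-dependence. The right-hand side is a polynomial in $t$ of degree $\ell+k$, with parity $(-1)^{\ell+k}$.

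Next, since the product is zonal (invariant under rotations around $e_3$), expand it in Legendre polynomials using $Y_{n,0}(x)=\sqrt{(2n+1)/(4\pi)}\,P_n(x\cdot e_3)$, which gives
\begin{equation*}
\overline{Y_{\ell,k}}(x)\,Y_{k,k}(x) = \sum_{n=0}^{\infty} \frac{2n+1}{2}\,I_n\,P_n(x\cdot e_3),
\qquad I_n := \int_{-1}^{1} (1-t^2)^k P_\ell^{(k)}(t)\, P_n(t)\, dt\cdot C_{k,\ell},
\end{equation*}
with $C_{k,\ell}$ the overall prefactor above. The triangle inequality $|\ell-k|\le n\le \ell+k$ together with the parity restriction $\ell+k+n\equiv 0\pmod 2$ (i.e.\ $n\in A(k,\ell)$) emerges from two observations: the polynomial has degree $\ell+k$ (upper bound) and, after integrating by parts $k$ times against $P_n$, the surviving integrand is the product of $P_\ell$ with a polynomial of degree $k+n$, which vanishes by Legendre orthogonality unless $k+n\ge \ell$; parity matches $(-1)^{\ell+k+n}=1$.

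To evaluate $I_n$ in closed form, I would write $P_n(t)=\frac{1}{2^n n!}\frac{d^n}{dt^n}(t^2-1)^n$ via Rodrigues' formula, integrate by parts $n$ times (boundary terms vanish because $(1-t^2)^k$ vanishes to order $k$ at $\pm1$ and eventually all derivatives fall on $(1-t^2)^k$), and exploit the explicit expansion $(t^2-1)^n=\sum_s \binom{n}{s}(-1)^{n-s} t^{2s}$. After this manipulation and applying Leibniz's rule to distribute $\frac{d^n}{dt^n}$ among $(1-t^2)^k$ and $P_\ell^{(k)}(t)$, only the combinatorial term corresponding to $g=(k+\ell+n)/2$ survives the orthogonality relations among monomials and Legendre polynomials. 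This is the origin of the trinomial-type coefficient $F(g)=g!/[(g-k)!(g-\ell)!(g-n)!]$. The remaining constants $(-k+\ell+n)!/(k+\ell+n+1)!$ come from the normalization factors $1/(2^n n!)$, from $\int_{-1}^{1}(1-t^2)^{g}\,dt=\frac{2^{2g+1}(g!)^2}{(2g+1)!}$-type formulas, and from the closed-form evaluation $P_\ell^{(k)}(t)$ applied at the surviving monomial. The sign $(-1)^{g+k-n}$ follows from combining $(-1)^n$ from integration by parts, $(-1)^{n-s}$ from the expansion of Rodrigues' formula, and the overall $(-1)^k$ from \eqref{id:sh_conj} after rewriting $\overline{Y_{\ell,k}}=(-1)^k Y_{\ell,-k}$.

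The main obstacle will be the bookkeeping in the final step: carefully matching the combinatorial constant arising from integration by parts and Leibniz expansion with the specific form $F(g)\cdot (-k+\ell+n)!/(k+\ell+n+1)!\cdot \sqrt{(2k+1)(2\ell+1)(k+\ell)!(2k)!/(\ell-k)!}$. An alternative (and shorter) route would be to invoke the Gaunt integral formula
\begin{equation*}
\int_{\IS^2}Y_{n,0}\,Y_{\ell,-k}\,Y_{k,k}\,dS=\sqrt{\tfrac{(2n+1)(2\ell+1)(2k+1)}{4\pi}}\begin{pmatrix}n&\ell&k\\0&0&0\end{pmatrix}\begin{pmatrix}n&\ell&k\\0&-k&k\end{pmatrix},
\end{equation*}
use the Racah closed forms for both Wigner $3j$-symbols (which at the extreme value $m_3=k$ collapse to a single term involving $(2k)!$, $(\ell-k)!$, $(\ell+k)!$), and then translate back to $P_n(x\cdot e_3)$ via $Y_{n,0}=\sqrt{(2n+1)/(4\pi)}P_n$. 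Both routes produce the same answer; the latter is cleaner if the reader is willing to invoke the $3j$-symbol identities, while the former is self-contained via Rodrigues and integration by parts.
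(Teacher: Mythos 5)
Your ``alternative'' second route is essentially the paper's proof. The paper applies Rose's product formula
\[
Y_{\ell_1,m_1}Y_{\ell_2,m_2}=\sum_{\ell}\Bigl(\tfrac{(2\ell_1+1)(2\ell_2+1)}{4\pi(2\ell+1)}\Bigr)^{1/2}C(\ell_1\ell_2\ell;m_1m_2)\,C(\ell_1\ell_2\ell;00)\,Y_{\ell,m_1+m_2}
\]
after writing $\ol{Y_{\ell,k}}Y_{k,k}=(-1)^kY_{\ell,k}Y_{k,-k}$ via \eqref{id:sh_conj}, and then inserts the two closed-form Clebsch--Gordan coefficients $C(\ell\,k\,n;k,-k)$ and $C(\ell\,k\,n;0,0)$ from Rose (3.29) and (3.32); the latter is exactly where $F(g)$ and the parity constraint come from. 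Your Gaunt/$3j$-symbol formulation is the same argument up to the fixed normalization linking $3j$-symbols to Clebsch--Gordan coefficients, so there is nothing ``alternative'' about it beyond notation.

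Your first (preferred) route --- reducing to $(1-t^2)^kP_\ell^{(k)}(t)$ and computing $\int_{-1}^1(1-t^2)^kP_\ell^{(k)}P_n\,dt$ by Rodrigues' formula and repeated integration by parts --- is a genuinely different, more elementary strategy, and the preliminary reduction to
\[
\ol{Y_{\ell,k}}\,Y_{k,k}=\frac{\sqrt{(2k+1)(2\ell+1)}}{4\pi}\sqrt{\frac{(2k)!(\ell-k)!}{(\ell+k)!}}\,\frac{1}{2^kk!}\,(1-t^2)^kP_\ell^{(k)}(t),\qquad t=x\cdot e_3,
\]
is correct. The triangle and parity constraints on $n$ also follow as you say. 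However, the statement's entire content is the closed value of the coefficient, i.e.\ that the integral equals $(-1)^{g+k-n}F(g)\frac{(-k+\ell+n)!}{(k+\ell+n+1)!}\cdot(\text{normalizations})$, and this is precisely the step you flag as the ``main obstacle'' and do not carry out. Asserting ``both routes produce the same answer'' without executing the Leibniz/orthogonality bookkeeping leaves the lemma unproved by route 1. To make route 1 rigorous you must actually evaluate $\int_{-1}^1 P_n(t)(1-t^2)^kP_\ell^{(k)}(t)\,dt$ in closed form (e.g.\ via the Adams or Dougall formulas for integrals of three Legendre-type functions) and verify it reproduces $F(g)\,\frac{(-k+\ell+n)!}{(k+\ell+n+1)!}$ with the stated sign; as it stands, the proposal is a plan, not a proof. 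Given that the Clebsch--Gordan route gives the constant directly from tabulated special cases, the paper's approach is the more economical one here, and your route 1 would only add value if the $3j$/CG machinery were to be avoided entirely, which requires finishing that computation.
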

The proof of this lemma involves computations with the Clebsch-Gordan coefficients, so we leave the details  for the \cref{sec:appendix_cl_coeff}.

\begin{proof}[Proof of $\Cref{prop:general_fourier}$]
  The absolute convergence follows easily by  the estimate \eqref{est:m_decay}. The $\mu_{k,\ell;\omega}[q]$ grow exponentially with $k$ |here we don't assume $q$ supported in $B$, so $\alpha$ can be larger that $1$| but this is  compensated by the factors $(2k)!(\ell-k)! (k+\ell)!$ in the denominator.

Let $x = |x|\theta $ and $\xi = |\xi| \omega$, where $\omega,\theta \in \SS^{2}$.  By \eqref{id:rot_invariance_m_lambda} and the invariance properties of the Fourier tranform under rotations, it is enough to prove the case $\omega= e_3$. 
We recall that
\begin{equation*} 
\m_{k,\ell;e_3}[q] = \int_{B} |x|^{\ell + k} q(x)  \ol{Y_{\ell,k} } \left(\frac{x}{|x|}\right)   Y_{k,k}  \left(\frac{x}{|x|}\right)   \, dx  .
\end{equation*}
In the right hand side  of \eqref{id:Fourier_representation}, let us commute the integral of $q$ that appears in the previous expression for  $\m_{k,\ell;e_3}$   with the summation in $k$ and $\ell$. 
  This is justified by the absolute convergence of the series and the absolute value estimate \eqref{est:m_decay} for the integrals that define the moments.
Thus, the identity \eqref{id:Fourier_representation} can be written as 
\[ 
\widehat{q} (\xi)  = \int_{B}q(x) a(x,\xi) \, dx,
\]
where 
\begin{multline} \label{id:a_def}
a(x,\xi)   = \sum_{k=0}^\infty  \sum_{\ell=k}^\infty   (-i)^{\ell+k} \times \\
\times \frac{4\pi  }{\sqrt{(2k+1)(2\ell+1)}} \frac{  1}{ \sqrt{ (2k)!(\ell-k)! (k+\ell)!}} |\xi|^{\ell+k}  |x|^{\ell +k } \, \ol{Y_{\ell,k} } \left(\frac{x}{|x|}\right)   Y_{k,k}  \left(\frac{x}{|x|}\right).
\end{multline}
 As a consequence,  to prove  that \eqref{id:Fourier_representation} holds true with $\xi = |\xi|e_3$, it is enough to show that
\begin{equation*}
a(x,\xi) =  e^{ -i\xi \cdot x} = e^{-i |\xi||x| e_3 \cdot \theta} .
\end{equation*}
On the other hand, the expansion of a plane wave   in terms of spherical waves is given by the classical formula
\[
e^{ -i\xi \cdot x} = e^{-i |\xi||x| e_3 \cdot \theta} = \sum_{n=0}^\infty  (-i)^n (2n+1)  P_{n}(\theta\cdot e_3)  j_n(|\xi| |x|),
\]
where $j_n$ is the Bessel spherical function given by
\[
j_n(t)  = \sqrt{\frac{\pi}{2x}} J_{n +\frac{1}{2}}(t) = \frac{\sqrt{\pi}}{2} \sum_{m=0}^\infty \frac{(-1)^m}{m!\Gamma(m + n+3/2)} \left ( \frac{t}{2}\right )^{2m + n} ,
\]
using \eqref{id:Bessel}.  Thus
\begin{equation*}
e^{ -i\xi \cdot x} =   \frac{\sqrt{\pi}}{2} \sum_{n=0}^\infty  (-i)^n (2n+1)  P_{n}(\theta\cdot e_3) \sum_{m=0}^\infty \frac{(-1)^m}{m!\Gamma(m + n+3/2)} \left ( \frac{|\xi||x|}{2}\right )^{2m + n}  .
\end{equation*}
Therefore,    \eqref{id:Fourier_representation} will be proved for $\xi= |\xi|e_3$ if we show that
\begin{equation} \label{id:mega_identity}
a(x,\xi)
=  \frac{\sqrt{\pi}}{2} \sum_{n=0}^\infty  (-i)^n (2n+1)  P_{n}(\theta\cdot e_3) \sum_{m=0}^\infty \frac{(-1)^m}{m!\Gamma(m + n+3/2)} \left ( \frac{|\xi||x|}{2}\right )^{2m + n}.
\end{equation}
Notice that this does not follow from the addition theorem of spherical harmonics since the right hand side of \eqref{id:a_def} involves spherical harmonics of different degrees.


   \Cref{lemma:sphe_harm_products} implies that
 \begin{multline*}
   \frac{4\pi \, 2^{k +\ell}}{\sqrt{(2k+1)(2\ell+1)}} \frac{  1}{ \sqrt{ (2k)!(\ell-k)! (k+\ell)!}} 
 \ol{Y_{\ell,k}}(\theta) Y_{k,k}(\theta)  \\
 = \sum_{n \in A(k,\ell)} (-1)^{g+k-n} (2n+1) P_{n}(\theta\cdot e_3)  F(g) \frac{(-k +\ell +n)!}{(k +\ell +n +1)!}  \frac{  \, 2^{k +\ell}}{  (\ell-k)!}  ,
\end{multline*}
for all $\theta \in \SS^2$. Going back to   \eqref{id:a_def},  we have proved that
\begin{equation*}  
 a(x,\xi) = 
 \sum_{k=0}^\infty  \sum_{\ell=k}^\infty  \sum_{n \in A(k,\ell)} b(k,\ell,n,),
\end{equation*}
where
\begin{multline} \label{id:b}
b(k,\ell,n) =   (2n+1) P_{n}(\theta\cdot e_3)  i^{k+\ell}    \left(\frac{|\xi||x|}{2} \right)^{k+\ell}   \times\\
\times { (-1)^{g+\ell-n}}  F(g)   \frac{(-k +\ell +n)!}{(k +\ell +n +1)!}  \frac{   2^{k +\ell}}{  (\ell-k)!}.
\end{multline}
We now commute the summation in $\ell$ and in $n$.  To do this, first we change the summation in $\ell$ for a summation in a new index $p= \ell +k$. Recall that $n\in A(k,\ell)$ if $\ell-k \le   n \le \ell +k$ and that $\ell +k + n$ is even. This now becomes the condition  $n\in B(k,p) \subset \N_0$, where $B(k,p) $ is the set of    numbers such that $p-2k \le n \le p$ and   $p+n $ is even. Then
\[
a(x,\xi) 
 = \sum_{k=0}^\infty \,  \sum_{p=2k}^\infty \, \sum_{n \in B(k,p)} b(k,p-k,n)
 = \sum_{k=0}^\infty \,  \sum_{n=0}^\infty  \, \sum_{p \in D(n,k)}    b(k,p-k,n),
\]
where $p \in D(n,k)\subset \N_0$ if $n\le p \le 2k +n$, $p\ge 2k$,  and $p+n$ is even. 
Observe now that  the factor   $|x|^{\ell+k} = |x|^{p}$ appears in all the previous expressions, while in \eqref{id:mega_identity} we have the powers  $|x|^{2m+n}$. This motivates the introduction another change in the summation parameters by putting $p=2m+n$. Notice that this is natural: we have that $p+n$ is even, and hence so must be $2m = p-n$. Also,  $p\ge n$, so it holds that $m\ge 0$.  This yields
\[
a(x,\xi) 
 = \sum_{k=0}^\infty  \sum_{n=0}^\infty \sum_{\substack{0\le m \le k\\ m \ge k-n/2 }}     b(k,2m-k+n,n) 
 =     \sum_{n=0}^\infty \sum_{m=0}^\infty \sum_{k =m}^{m+n/2} b(k,2m-k+n,n).
\]
We now substitute $\ell = 2m +n-k$ in the previous formulas. Since $2g = k+\ell+n$ we get   that $g= m+n$. Using this in \eqref{id:fg} and in \eqref{id:b} gives
\begin{multline*}  
 a(x,\xi) = 
 \sum_{n=0}^\infty  P_{n}(\theta\cdot e_3) \sum_{m=0}^\infty  { (-1)^{n}} \, i^{2m+n}(2n+1) \sum_{k =m}^{m+n/2}  { (-1)^{k+m}}   \left(\frac{|\xi||x|}{2} \right)^{2m+n}   \times \\ 
 \times     \frac{(2m +2n-2k  )!}{(2m+2n +1)!}  \frac{    2^{2m+n}}{  (2m+n-2k)!} \frac{(m+n)!}{(m+n-k )! (k-m)! m!}.
\end{multline*} 
By \eqref{id:Gamma} we have that
\begin{align*}
(2m+2n +1)! &= (2m+2n+1)(2m+2n)! \\
&= \frac{2}{\sqrt{\pi}} 2^{2m+2n} (m+n) ! \left(m+n+1/2\right) \Gamma \left(n+m+1/2\right ) \\
 &= \frac{2}{\sqrt{\pi}}  2^{2m+2n} (m+n) !  \Gamma \left(n + m + 3/2\right ) .
\end{align*}
Thus
\begin{multline*}  
 a(x,\xi) = 
 \frac{\sqrt{\pi}}{2} \sum_{n=0}^\infty  (-i)^n(2n+1) P_{n}(\theta\cdot e_3)  \sum_{m=0}^\infty \frac{(-1)^m}{m!\Gamma(m + n+3/2)} \left(\frac{|\xi||x|}{2} \right)^{2m+n} \times \\ 
 \times  2^{-n}  \sum_{k =m}^{m+n/2}  { (-1)^{k+m}}   \frac{(2m +2n-2k  )!      }{  (2m+n-2k)!  (m+n-k )! (k-m)!}  .
\end{multline*}
Putting $k =s+m$ with $s\in \N_0$, yields
\begin{multline*}  
 a(x,\xi) = 
 \frac{\sqrt{\pi}}{2} \sum_{n=0}^\infty (-i)^n(2n+1) P_{n}(\theta\cdot e_3)   \sum_{m=0}^\infty \frac{(-1)^m}{m!\Gamma(m + n+3/2)} \left(\frac{|\xi||x|}{2} \right)^{2m+n} \times \\ 
 \times  2^{-n}   \sum_{s=0}^{n/2}     { (-1)^{s}}     \frac{( 2n-2s )!      }{ s! (n-s )! ( n-2s)!  }  .
\end{multline*}
 Legendre polynomials satisfy $P_n (1) = 1$, so \eqref{id:demencial} implies that
\[
 2^{-n}   \sum_{s=0}^{n/2}     { (-1)^{s}}     \frac{( 2n-2s )!      }{ s! (n-s )! ( n-2s)!  } =  P_n (1) = 1 \quad \text{for all }  n\in \N.
\]
This proves \eqref{id:mega_identity} and hence finishes the proof of the proposition.
\end{proof}

\appendix

\section{Products of spherical harmonics} \label{sec:appendix_cl_coeff}

In this section we prove \Cref{lemma:sphe_harm_products}. To do this we use the following formula
\begin{multline} \label{id:rose_formula}
Y_{\ell_1,m_1}(x) Y_{\ell_2,m_2}(x) = 
\\  \sum_{\ell = |\ell_1-\ell_2| }^{\ell_1+\ell_2} \left( \frac{(2\ell_1+1)(2\ell_2+1)}{4\pi (2\ell+1)}\right)^{1/2}  C(\ell_1 \, \ell_2\, \ell; \, m_1\, m_2)C(\ell_1\, \ell_2\, \ell; \,0\,0) Y_{\ell,m_1+m_2}(x)
\end{multline}
|see for example \cite[formula (4.32)]{rose}| where the $C(\ell_1 \,\ell_2 \,\ell; \,m_1 \,m_2)$ are the Clebsch-Gordan coefficients. 
These coefficients can be computed in terms of $\ell_1 ,\ell_2, \ell, m_1$, and $m_2$ thorough an explicit but quite convoluted expression known as Racah formula.  
Fortunately, the special cases 
 $m_2 = -\ell_2$ and $m_1 =m_2 =0$   are much simpler than the general case. In fact, from formula  (3.29)   of \cite{rose} we obtain
\begin{equation} \label{id:c-g_coeff_1}
C(\ell\, k\, n;  \, k\, -k)=  \sqrt{2n+1}
 \left[  \frac{(2k)!(-k +\ell +n)!}{(k +\ell +n +1)!(k -\ell +n )!} \frac{(k + \ell)!}{(k +\ell -n )! (\ell -k )!} \right]^{\frac{1}{2}}.
\end{equation}
On the other hand, for values of $n\in \N_0$ such that $\ell + k +n$ is even,  formula  (3.32) of \cite{rose}  gives
\begin{multline} \label{id:c-g_coeff_2}
C(\ell\, k\, n;  \, 0\, 0) =  
\\ (-1)^{\frac{1}{2}(k+\ell-n)} \sqrt{2n+1} \left[  \frac{(-k +\ell +n)!(k -\ell +n)!(k +\ell -n)!}{(k +\ell +n +1)!}  \right]^{\frac{1}{2}} F(g),
\end{multline}
where $ 2g = \ell + k +n$ and $F(g)$ is given by \eqref{id:fg}. If $\ell + k +n$ is odd, it holds that $C(\ell\, k\, n;  \, 0\, 0)=0$. We can now prove the lemma.

\begin{proof}[Proof of $\Cref{lemma:sphe_harm_products}$]
First, by \eqref{id:spherical_harmonic_omega} and \eqref{id:rose_formula} one immediately has that
\begin{multline*}
Y_{\ell_1,m_1} (x) Y_{\ell_2,m_2} (x) = 
\\  \sum_{\ell = |\ell_1-\ell_2| }^{\ell_1+\ell_2} \left( \frac{(2\ell_1+1)(2\ell_2+1)}{4\pi (2\ell+1)}\right)^{1/2} C(\ell_1 \, \ell_2\, \ell; \, m_1\, m_2)C(\ell_1\, \ell_2\, \ell; \,0\,0)  Y_{\ell,m_1+m_2} (x).
\end{multline*}
On the other hand, by \eqref{id:quantum_def_spherical_harmonic}, the quantity $\ol{Y_{\ell,k}}(x) Y_{k,k}  (x) $ is real for all $x\in\SS^2$. Hence, by \eqref{id:sh_conj}   we have that
\[
\ol{Y_{\ell,k}} (x) Y_{k,k}   (x)  = Y_{\ell,k} (x) \ol{Y_{k,k}}   (x) = (-1)^k Y_{\ell,k} (x) Y_{k,-k} (x) \quad x\in \SS^2.
\]
Therefore in our case we get
\begin{equation*}
\ol{Y_{\ell,k}} (x) Y_{k,k}   (x)  = 
\\  \sum_{n \in A(k,\ell) } (-1)^k \frac{\sqrt{(2\ell+1)(2k+1)}}{4\pi  }  C(\ell \, k\, n; \, k\, -k)C(\ell\, k\, n; \,0\,0)   P_n (x \cdot e_3),
\end{equation*}
since $Y_{n,0}(x) = \sqrt{\frac{2n+1}{4\pi}}P_n(x\cdot e_3)$  by \eqref{id:quantum_def_spherical_harmonic}. The set $A(k,\ell)$, defined in the statement of the lemma, appears when using that   $C(\ell\, k\, n;  \, 0\, 0)=0$ if $\ell + k +n$ is odd, as stated previously.
Therefore, inserting \eqref{id:c-g_coeff_1} and \eqref{id:c-g_coeff_2} in the previous identity proves the lemma.
\end{proof}

\bibliographystyle{myalpha}

\bibliography{references_dnmap}

\newcommand{\etalchar}[1]{$^{#1}$}
\begin{thebibliography}{ALMM{\etalchar{+}}01}
\expandafter\ifx\csname url\endcsname\relax
  \def\url#1{\texttt{#1}}\fi
\expandafter\ifx\csname doi\endcsname\relax
  \def\doi#1{\burlalt{doi:#1}{http://dx.doi.org/#1}}\fi
\expandafter\ifx\csname urlprefix\endcsname\relax\def\urlprefix{URL }\fi
\expandafter\ifx\csname href\endcsname\relax
  \def\href#1#2{#2}\fi
\expandafter\ifx\csname burlalt\endcsname\relax
  \def\burlalt#1#2{\href{#2}{#1}}\fi

\bibitem[AH12]{atkinson}
K. Atkinson and W. Han.
\newblock {\em Spherical harmonics and approximations on the unit sphere: an
  introduction}, volume 2044 of {\em Lecture Notes in Mathematics}.
\newblock Springer, Heidelberg, 2012.
\newblock \doi{10.1007/978-3-642-25983-8}.

\bibitem[Ale88]{Alessandrini88}
G. Alessandrini.
\newblock Stable determination of conductivity by boundary measurements.
\newblock {\em Appl. Anal.}, 27(1-3):153--172, 1988.
\newblock \doi{10.1080/00036818808839730}.

\bibitem[Ale90]{Alessandrini90}
G. Alessandrini.
\newblock Singular solutions of elliptic equations and the determination of
  conductivity by boundary measurements.
\newblock {\em J. Differential Equations}, 84(2):252--272, 1990.
\newblock \doi{10.1016/0022-0396(90)90078-4}.

\bibitem[ALMM{\etalchar{+}}01]{Assenheimer_2001}
M. Assenheimer, O. Laver-Moskovitz, D. Malonek, D. Manor, U. Nahaliel, R.
  Nitzan, and A. Saad.
\newblock The t-{SCANTMtechnology}: electrical impedance as a diagnostic tool
  for breast cancer detection.
\newblock {\em Physiological Measurement}, 22(1):1--8, feb 2001.
\newblock \doi{10.1088/0967-3334/22/1/301}.

\bibitem[AW01]{arfken}
G.~B. Arfken and H.~J. Weber.
\newblock {\em Mathematical methods for physicists}.
\newblock Harcourt/Academic Press, Burlington, MA, fifth edition, 2001.

\bibitem[BCMM22]{numerical}
J.~A. Barceló, C. Castro, F. Macià, and C.~J. Meroño.
\newblock The {B}orn approximation in the three-dimensional {C}alderón problem
  {I}{I}: Numerical reconstruction in the radial case, 2022,
  \burlalt{arXiv:2205.15587}{http://arxiv.org/abs/arXiv:2205.15587}.

\bibitem[BKM11]{BKM11}
J. Bikowski, K. Knudsen, and J.~L. Mueller.
\newblock Direct numerical reconstruction of conductivities in three dimensions
  using scattering transforms.
\newblock {\em Inverse Problems}, 27(1):015002, 19, 2011.
\newblock \doi{10.1088/0266-5611/27/1/015002}.

\bibitem[{Bro}01]{Brown}
R. {Brown}.
\newblock {Recovering the conductivity at the boundary from the Dirichlet to
  Neumann map: A pointwise result.}
\newblock {\em {J. Inverse Ill-Posed Probl.}}, 9(6):567--574, 2001.
\newblock \doi{10.1515/jiip.2001.9.6.567}.

\bibitem[{Cal}06]{calderon}
A.~P. {Calder\'on}.
\newblock {On an inverse boundary value problem.}
\newblock {\em {Comput. Appl. Math.}}, 25(2-3):133--138, 2006.
\newblock
  \urlprefix\url{https://www.scielo.br/j/cam/a/fr8pXpGLSmDt8JyZyxvfwbv/?lang=en&format=pdf}.

\bibitem[CIN99]{cheney99}
M. Cheney, D. Isaacson, and J.~C. Newell.
\newblock Electrical impedance tomography.
\newblock {\em SIAM Review}, 41(1):85--101, 1999.
\newblock \doi{10.1137/S0036144598333613}.

\bibitem[CR16]{CaroRogers16}
P. Caro and K.~M. Rogers.
\newblock Global uniqueness for the {C}alder\'{o}n problem with {L}ipschitz
  conductivities.
\newblock {\em Forum Math. Pi}, 4:e2, 28, 2016.
\newblock \doi{10.1017/fmp.2015.9}.

\bibitem[DHK11]{Delbary_2011}
F. Delbary, P.~C. Hansen, and K. Knudsen.
\newblock A direct numerical reconstruction algorithm for the 3d calder{\'{o}}n
  problem.
\newblock {\em Journal of Physics: Conference Series}, 290:012003, apr 2011.
\newblock \doi{10.1088/1742-6596/290/1/012003}.

\bibitem[DHK12]{DHK12}
F. Delbary, P.~C. Hansen, and K. Knudsen.
\newblock Electrical impedance tomography: 3{D} reconstructions using
  scattering transforms.
\newblock {\em Appl. Anal.}, 91(4):737--755, 2012.
\newblock \doi{10.1080/00036811.2011.598863}.

\bibitem[DK14]{DelbaryKnudsen14}
F. Delbary and K. Knudsen.
\newblock Numerical nonlinear complex geometrical optics algorithm for the 3{D}
  {C}alder\'{o}n problem.
\newblock {\em Inverse Probl. Imaging}, 8(4):991--1012, 2014.
\newblock \doi{10.3934/ipi.2014.8.991}.

\bibitem[DKN20]{daude1}
T. Daud\'{e}, N. Kamran, and F. Nicoleau.
\newblock The anisotropic {C}alder\'{o}n problem for singular metrics of warped
  product type: the borderline between uniqueness and invisibility.
\newblock {\em J. Spectr. Theory}, 10(2):703--746, 2020.
\newblock \doi{10.4171/JST/310}.

\bibitem[DKN21]{daude2}
T. Daud\'{e}, N. Kamran, and F. Nicoleau.
\newblock Stability in the inverse {S}teklov problem on warped product
  {R}iemannian manifolds.
\newblock {\em J. Geom. Anal.}, 31(2):1821--1854, 2021.
\newblock \doi{10.1007/s12220-019-00326-9}.

\bibitem[Fad65]{Fadd65}
L.~D. Faddeev.
\newblock Growing solutions of the {S}chrödinger equation.
\newblock {\em Dokl. Akad. Nauk SSSR}, 165:514--517, 1965.

\bibitem[GKLU09]{GKLU2009}
A. Greenleaf, Y. Kurylev, M. Lassas, and G. Uhlmann.
\newblock Invisibility and inverse problems.
\newblock {\em Bull. Amer. Math. Soc. (N.S.)}, 46(1):55--97, 2009.
\newblock \doi{10.1090/S0273-0979-08-01232-9}.

\bibitem[GLS{\etalchar{+}}18]{GLSSU2018}
A. Greenleaf, M. Lassas, M. Santacesaria, S. Siltanen, and G. Uhlmann.
\newblock Propagation and recovery of singularities in the inverse conductivity
  problem.
\newblock {\em Anal. PDE}, 11(8):1901--1943, 2018.
\newblock \doi{10.2140/apde.2018.11.1901}.

\bibitem[GLU03]{GLU2003}
A. Greenleaf, M. Lassas, and G. Uhlmann.
\newblock The {C}alder\'{o}n problem for conormal potentials. {I}. {G}lobal
  uniqueness and reconstruction.
\newblock {\em Comm. Pure Appl. Math.}, 56(3):328--352, 2003.
\newblock \doi{10.1002/cpa.10061}.

\bibitem[GP17]{GirPol17}
A. Girouard and I. Polterovich.
\newblock Spectral geometry of the {S}teklov problem (survey article).
\newblock {\em J. Spectr. Theory}, 7(2):321--359, 2017.
\newblock \doi{10.4171/JST/164}.

\bibitem[Gui78]{Guill78}
V. Guillemin.
\newblock Some spectral results for the {L}aplace operator with potential on
  the {$n$}-sphere.
\newblock {\em Advances in Math.}, 27(3):273--286, 1978.
\newblock \doi{10.1016/0001-8708(78)90102-0}.

\bibitem[GUW12]{GuillUrWang12}
V. Guillemin, A. Uribe, and Z. Wang.
\newblock Band invariants for perturbations of the harmonic oscillator.
\newblock {\em J. Funct. Anal.}, 263(5):1435--1467, 2012.
\newblock \doi{10.1016/j.jfa.2012.05.022}.

\bibitem[Hab15]{Haberman15}
B. Haberman.
\newblock Uniqueness in {C}alder\'{o}n's problem for conductivities with
  unbounded gradient.
\newblock {\em Comm. Math. Phys.}, 340(2):639--659, 2015.
\newblock \doi{10.1007/s00220-015-2460-3}.

\bibitem[HIK{\etalchar{+}}21]{HIKMTB21}
S.~J. Hamilton, D. Isaacson, V. Kolehmainen, P.~A. Muller, J. Toivainen, and
  P.~F. Bray.
\newblock 3{D} electrical impedance tomography reconstructions from simulated
  electrode data using direct inversion {$\rm t^{\exp}$} and {C}alder\'{o}n
  methods.
\newblock {\em Inverse Probl. Imaging}, 15(5):1135--1169, 2021.
\newblock \doi{10.3934/ipi.2021032}.

\bibitem[IMNS06]{Isaacson_2006}
D. Isaacson, J.~L. Mueller, J.~C. Newell, and S. Siltanen.
\newblock Imaging cardiac activity by the d-bar method for electrical impedance
  tomography.
\newblock {\em Physiological Measurement}, 27(5):S43--S50, apr 2006.
\newblock \doi{10.1088/0967-3334/27/5/s04}.

\bibitem[KLMS07]{KLMS2007}
K. Knudsen, M. Lassas, J.~L. Mueller, and S. Siltanen.
\newblock D-bar method for electrical impedance tomography with discontinuous
  conductivities.
\newblock {\em SIAM J. Appl. Math.}, 67(3):893--913, 2007.
\newblock \doi{10.1137/060656930}.

\bibitem[KM11]{KnudsenMueller11}
K. Knudsen and J.~L. Mueller.
\newblock The {B}orn approximation and {C}alder\'{o}n's method for
  reconstruction of conductivities in 3-{D}.
\newblock {\em Discrete Contin. Dyn. Syst.}, Dynamical systems, differential
  equations and applications. 8th AIMS Conference. Suppl. Vol. II:844--853,
  2011.

\bibitem[KV84]{KV84}
R. Kohn and M. Vogelius.
\newblock Determining conductivity by boundary measurements.
\newblock {\em Comm. Pure Appl. Math.}, 37(3):289--298, 1984.
\newblock \doi{10.1002/cpa.3160370302}.

\bibitem[Leb65]{Lebedev}
N.~N. Lebedev.
\newblock {\em Special functions and their applications}.
\newblock Prentice-Hall, Inc., Englewood Cliffs, N.J., english edition, 1965.
\newblock Translated and edited by Richard A. Silverman.

\bibitem[Man01]{Mand00}
N. Mandache.
\newblock Exponential instability in an inverse problem for the
  {S}chr\"{o}dinger equation.
\newblock {\em Inverse Problems}, 17(5):1435--1444, 2001.
\newblock \doi{10.1088/0266-5611/17/5/313}.

\bibitem[Mer18]{fix}
C.~J. Meroño.
\newblock Fixed angle scattering: recovery of singularities and its
  limitations.
\newblock {\em SIAM J. Math. Anal.}, 50(5):5616--5636, 2018.
\newblock \doi{10.1137/18M1164871}.

\bibitem[Mer19]{back}
C.~J. Meroño.
\newblock Recovery of the singularities of a potential from backscattering data
  in general dimension.
\newblock {\em J. Differential Equations}, 266(10):6307--6345, 2019.
\newblock \doi{10.1016/j.jde.2018.11.003}.

\bibitem[MR16]{MaRiv16}
F. Maci\`a and G. Rivi\`ere.
\newblock Concentration and non-concentration for the {S}chr\"{o}dinger
  evolution on {Z}oll manifolds.
\newblock {\em Comm. Math. Phys.}, 345(3):1019--1054, 2016.
\newblock \doi{10.1007/s00220-015-2504-8}.

\bibitem[MS12]{MuellerSiltanenBook}
J.~L. Mueller and S. Siltanen.
\newblock {\em Linear and nonlinear inverse problems with practical
  applications}, volume~10 of {\em Computational Science \& Engineering}.
\newblock Society for Industrial and Applied Mathematics (SIAM), Philadelphia,
  PA, 2012.
\newblock \doi{10.1137/1.9781611972344}.

\bibitem[MS20]{MuellerSiltanen20}
J.~L. Mueller and S. Siltanen.
\newblock The {D}-bar method for electrical impedance tomography---demystified.
\newblock {\em Inverse Problems}, 36(9):093001, 28, 2020.
\newblock \doi{10.1088/1361-6420/aba2f5}.

\bibitem[{Nac}88]{nachman88}
A.~I. {Nachman}.
\newblock {Reconstructions from boundary measurements.}
\newblock {\em {Ann. Math. (2)}}, 128(3):531--576, 1988.
\newblock \doi{10.2307/1971435}.

\bibitem[OVVB12]{OjVil12}
D. Ojeda-Valencia and C. Villegas-Blas.
\newblock On limiting eigenvalue distribution theorems in semiclassical
  analysis.
\newblock In {\em Spectral analysis of quantum {H}amiltonians}, volume 224 of
  {\em Oper. Theory Adv. Appl.}, pages 221--252. Birkh\"{a}user/Springer Basel
  AG, Basel, 2012.
\newblock \doi{10.1007/978-3-0348-0414-1\_11}.

\bibitem[PS91]{PaivarintaSomersalo91}
L. P\"aiv\"arinta and E. Somersalo.
\newblock Inversion of discontinuities for the {S}chr\"odinger equation in
  three dimensions.
\newblock {\em SIAM J. Math. Anal.}, 22(2):480--499, 1991.
\newblock \doi{10.1137/0522031}.

\bibitem[Ros57]{rose}
M.~E. Rose.
\newblock {\em Elementary theory of angular momentum}.
\newblock John Wiley \& Sons, Inc., New York; Chapman \& Hall, Ltd., London.,
  1957.

\bibitem[Rui01]{alberto_fix}
A. Ruiz.
\newblock Recovery of the singularities of a potential from fixed angle
  scattering data.
\newblock {\em Comm. Partial Differential Equations}, 26(9-10):1721--1738,
  2001.
\newblock \doi{10.1081/PDE-100107457}.

\bibitem[RV05]{AlbertoAna05}
A. Ruiz and A. Vargas.
\newblock Partial recovery of a potential from backscattering data.
\newblock {\em Comm. Partial Differential Equations}, 30(1-3):67--96, 2005.
\newblock \doi{10.1081/PDE-200044450}.

\bibitem[SMI00]{IMS2000}
S. Siltanen, J. Mueller, and D. Isaacson.
\newblock An implementation of the reconstruction algorithm of {A}. {N}achman
  for the 2{D} inverse conductivity problem.
\newblock {\em Inverse Problems}, 16(3):681--699, 2000.
\newblock \doi{10.1088/0266-5611/16/3/310}.

\bibitem[SU87]{SU87}
J. Sylvester and G. Uhlmann.
\newblock A global uniqueness theorem for an inverse boundary value problem.
\newblock {\em Ann. of Math. (2)}, 125(1):153--169, 1987.
\newblock \doi{10.2307/1971291}.

\bibitem[SU88]{SylUhl88}
J. {Sylvester} and G. {Uhlmann}.
\newblock {Inverse boundary value problems at the boundary-continuous
  dependence.}
\newblock {\em {Commun. Pure Appl. Math.}}, 41(2):188--219, 1988.
\newblock \doi{10.1002/cpa.3160410205}.

\bibitem[SW71]{steinweiss}
E.~M. Stein and G. Weiss.
\newblock {\em Introduction to {F}ourier analysis on {E}uclidean spaces}.
\newblock Princeton Mathematical Series, No. 32. Princeton University Press,
  Princeton, N.J., 1971.

\bibitem[Tay86]{taylor_NCHA}
M.~E. Taylor.
\newblock {\em Noncommutative harmonic analysis}, volume~22 of {\em
  Mathematical Surveys and Monographs}.
\newblock American Mathematical Society, Providence, RI, 1986.
\newblock \doi{10.1090/surv/022}.

\bibitem[Tes14]{Teschl}
G. Teschl.
\newblock {\em Mathematical methods in quantum mechanics}, volume 157 of {\em
  Graduate Studies in Mathematics}.
\newblock American Mathematical Society, Providence, RI, second edition, 2014.
\newblock \doi{10.1090/gsm/157}.
\newblock With applications to Schr\"{o}dinger operators.

\bibitem[Uhl14]{uhlmann14}
G. Uhlmann.
\newblock Inverse problems: seeing the unseen.
\newblock {\em Bull. Math. Sci.}, 4(2):209--279, 2014.
\newblock \doi{10.1007/s13373-014-0051-9}.

\bibitem[Uri84]{Ur84}
A. Uribe.
\newblock A symbol calculus for a class of pseudodifferential operators on
  {$S^n$} and band asymptotics.
\newblock {\em J. Funct. Anal.}, 59(3):535--556, 1984.
\newblock \doi{10.1016/0022-1236(84)90064-8}.

\bibitem[Uri85]{Ur85}
A. Uribe.
\newblock Band invariants and closed trajectories on {$S^n$}.
\newblock {\em Adv. in Math.}, 58(3):285--299, 1985.
\newblock \doi{10.1016/0001-8708(85)90120-3}.

\bibitem[Wei77]{Wein77}
A. Weinstein.
\newblock Asymptotics of eigenvalue clusters for the {L}aplacian plus a
  potential.
\newblock {\em Duke Math. J.}, 44(4):883--892, 1977.
\newblock \doi{10.1215/S0012-7094-77-04442-8}.

\end{thebibliography}

\end{document}